\begin{document}

\newtheorem{theorem}{Theorem}[section]
\newtheorem{lemma}[theorem]{Lemma}
\newtheorem{proposition}[theorem]{Proposition}
\newtheorem{corollary}[theorem]{Corollary}
\newtheorem{conjecture}[theorem]{Conjecture}
\newtheorem{question}[theorem]{Question}
\newtheorem{problem}[theorem]{Problem}
\newtheorem*{claim}{Claim}
\newtheorem*{criterion}{Criterion}
\newtheorem*{rigidity_thm}{Random Rigidity Theorem~\ref{random_rigidity_theorem}}
\newtheorem*{norm_thm}{Random Norm Theorem~\ref{random_norm_theorem}}

\theoremstyle{definition}
\newtheorem{definition}[theorem]{Definition}
\newtheorem{construction}[theorem]{Construction}
\newtheorem{notation}[theorem]{Notation}

\theoremstyle{remark}
\newtheorem{remark}[theorem]{Remark}
\newtheorem{example}[theorem]{Example}

\numberwithin{equation}{subsection}

\def\N{\mathbb N}
\def\Z{\mathbb Z}
\def\R{\mathbb R}
\def\Q{\mathbb Q}
\def\H{\mathbb H}
\def\E{\mathcal E}
\def\M{\mathcal M}
\def\C{\mathcal C}
\def\T{\mathcal T}

\def\Pr{{\bf P}}
\def\Ex{{\bf E}}

\def\mean{\textnormal{mean}}
\def\var{\textnormal{var}}
\def\freq{\textnormal{freq}}
\def\cl{\textnormal{cl}}
\def\scl{\textnormal{scl}}
\def\homeo{\textnormal{Homeo}}
\def\rot{\textnormal{rot}}
\def\area{\textnormal{area}}
\def\vol{\textnormal{vol}}
\def\suff{\textnormal{suff}}

\def\Id{\textnormal{Id}}
\def\PSL{\textnormal{PSL}}
\def\til{\widetilde}

\def\uphi{\underline{\phi}}

\makeatletter
  \newcommand\tinyv{\@setfontsize\tinyv{5pt}{6pt}}
\makeatother

\title{Random rigidity in the free group}
\author{Danny Calegari}
\address{Department of Mathematics \\ Caltech \\
Pasadena CA, 91125}
\email{dannyc@its.caltech.edu}
\author{Alden Walker}
\address{Department of Mathematics \\ Caltech \\
Pasadena CA, 91125}
\email{awalker@caltech.edu}
\date{version 1.0; \quad \today}
\dedicatory{Dedicated to the memory of Andrew Lange}

\begin{abstract}
We prove a rigidity theorem for the geometry of the unit ball in random subspaces of
the $\scl$ norm in $B_1^H$ of a free group. In a free group $F$ of rank $k$, a random
word $w$ of length $n$ (conditioned to lie in $[F,F]$)
has $\scl(w) = \log(2k-1)n/6\log(n) + o(n/\log(n))$ 
with high probability, and the unit ball in a subspace spanned by $d$ random words
of length $O(n)$ is $C^0$ close to a (suitably affinely scaled) octahedron.

A conjectural generalization to hyperbolic groups and manifolds (discussed in the appendix)
would show that the length of a random geodesic in a hyperbolic manifold can be recovered
from the bounded cohomology of the fundamental group.
\end{abstract}

\maketitle

\section{Introduction}

Mostow's Rigidity Theorem says that a homotopy equivalence between closed
hyperbolic manifolds of dimension at least three is homotopic to an isometry. 
It follows that geometric invariants of a hyperbolic manifold have (at least in principle) a purely topological
definition. This is most apparent in Gromov's famous proof \cite{Gromov_manifolds} 
of the Rigidity Theorem which proceeds by showing that an obviously topological invariant ---
namely the Gromov (or $L^1$) norm of the fundamental class in homology --- is proportional to
the volume in any hyperbolic metric. 
As observed by Thurston \cite{Thurston_notes} a similar argument shows that for any
locally symmetric space $M$ modeled on a symmetric space $X$ there is a constant $C(X)$
so that the norm of the fundamental class $\|[M]\|_1$ satisfies
$$\|[M]\|_1 = C(X)\cdot\vol(M)$$
However, the determination of the constant $C(X)$ in any given case is extremely difficult.
Haagerup and Munkholm \cite{Haagerup_Munkholm} showed for $X$ equal to hyperbolic
$n$-space $\H^n$ that $C(\H^n)=1/v_n$ where $v_n$ is the volume of the regular ideal hyperbolic
$n$-simplex, and Bucher-Karlsson \cite{Bucher} showed that $C(\H^2\times \H^2)=3/2\pi^2$. The proofs are
very hard, and underscore the difficulty of computing the exact values of 
(nonzero) Gromov norms.

In this paper we prove a new kind of rigidity theorem for the $2$-dimensional relative Gromov
norm (or what is the same thing, the {\em stable commutator length} norm) 
in a free group $F$. This is a norm on a vector space $B_1^H(F)$, the homogenization
of the space $B_1$ of real group $1$-boundaries (in the bar complex). The space
$B_1^H$ is infinite dimensional, but its geometry can be probed by restricting attention
to finite dimensional subspaces. Our main theorem is a rigidity result for the geometry of
the unit ball in {\em random} finite dimensional subspaces of $B_1^H$ (technically: in subspaces
spanned by random elements of fixed length). We show that these
unit balls are (suitably scaled) $C^0$ close to {\em octahedra} 
(i.e. the unit ball in $\R^k$ with its usual $L^1$ norm). We also determine the {\em exact} 
scaling constant, and show that it has a simple expression in terms of the {\em growth
exponent} of the free group (i.e.\/ the entropy of the Markov process that generates random reduced words).
We concentrate in this paper on the case of free groups for clarity of exposition, but
similar results should hold for random words in arbitrary hyperbolic groups, or random geodesics in
negatively curved manifolds, with an analogous formula
for the scaling constant. We explain the idea of this generalization in an appendix, but
save the details for a follow-up paper.

\medskip

Recall that {\em stable commutator length} is an algebraic stabilization of the topological
notion of {\em filling genus}. If $X$ is a space, and $\Gamma:\coprod_i S^1 \to X$ is a
homologically trivial $1$-manifold, the {\em filling genus} of $\Gamma$ is the least
genus of a surface $S$ mapping to $X$ whose boundary represents the homotopy class of
$\Gamma$. The stable commutator length $\scl(\Gamma)$ is the infimum of $-\chi(S)/2n$ over
all $n$ and all surfaces $S$ mapping to $X$ whose boundary represents a cover $\hat{\Gamma}$
of $\Gamma$ of degree $n$. If $G$ is a group and $X$ is a space with $\pi_1(X)=G$, loops
in $X$ correspond to conjugacy classes in $G$, and the geometric definition given above
defines in a natural way a {\em pseudo-norm} on $B_1(G)$, the space of (real) $1$-boundaries;
i.e.\/ finite formal real linear combinations of elements in $G$ representing $0$ in
(real) homology. For $G$ a hyperbolic group, $\scl$ descends to a norm on 
a suitable homogenized quotient $B_1^H(G):=B_1/\langle g-hgh^{-1},g^n-ng\rangle$. Precise
definitions are given in \S~\ref{scl_background_section}.

Our first main theorem concerns the stable commutator length of a random element of
$[F,F]$ of prescribed length $n$ (we assume without comment
that $n$ is even, since a reduced element of
odd length is never in $[F,F]$). Here ``random'' means with respect to the uniform probability
on the finite set of reduced words of length $n$ in $[F,F]$ (when $n$ is even).
For clarity, we frequently use the standard Landau ``big $O$/little $o$''
notation, so the expression $O(g(x))$ denotes some function $f(x)$ satisfying
$f(x) \le C|g(x)|$ for some positive constant $C$ and for all $x\gg 0$,
the expression $\Theta(g(x))$ denotes some function $f(x)$
satisfying $C_1 g(x) \le f(x) \le C_2 g(x)$ for some positive constants $C_1,C_2$ and for all
$x \gg 0$, the expression $o(g(x))$ denotes some function $f(x)$ satisfying
$\lim_{x \to \infty} f(x)/g(x) = 0$, and so on. 
See e.g.\/ \cite{Knuth_bigO} for a reference.

\begin{rigidity_thm}
Let $F$ be a free group of rank $k$, and let $v$ be a random reduced element of length $n$,
conditioned to lie in the commutator subgroup $[F,F]$. Then for any $\epsilon>0$ and
$C>1$,
$$|\scl(v)\log(n)/n - \log(2k-1)/6| \le \epsilon$$
with probability $1-O(n^{-C})$.
\end{rigidity_thm}
In particular, this implies that $\scl(v)\log(n)/n$ converges in probability to $\log(2k-1)/6$
as $n\to \infty$.

In more geometric language, we derive strong control on the geometry of the unit ball in the
$\scl$ norm in a random subspace.

\begin{norm_thm}
Let $F$ be a free group of rank $k$, and for fixed $d$, let $v_1,v_2,\cdots,v_d$ be 
independent random
reduced elements of length $n_1,n_2,\cdots,n_d$ conditioned to lie in $[F,F]$, where
without loss of generality we assume $n_1\ge n_i$ for all $i$. Let $V$ be the subspace of
$B_1^H(F)$ spanned by the $v_i$. Then for any $\epsilon>0, C>1$ and real numbers $t_i$,
$$|\scl(\sum t_iv_i)\log(n_1)/n_1 - \log(2k-1)(\sum |t_i|n_i)/6n_1|\le \epsilon$$
with probability $1-O(n_1^{-C})$.
\end{norm_thm}
In words: the unit ball in the $\scl$ norm scaled by $n_1/\log(n_1)$ converges to the
unit ball in the norm $\|\sum t_i v_i\| = \sum |t_i|n_i/n_1$ in the $C^0$ topology and
in probability, as $n_1 \to \infty$. If $n_i=n_1+o(n_1)$ for all $i$, the unit ball is $C^0$
close to a (scaled) octahedron.

\medskip

It is worth remarking that the speed of convergence is very slow. Our asymptotic theorems depend
on the distribution of the subwords of a random word at a particular characteristic scale: for a word of
length $n$, we focus on the subwords of length $O(\log(n))$. There are some ``boundary effects'' 
which suggest a heuristic correction to our asymptotic formula which becomes insignificant only
when $\log(n)$ is sufficiently large. Computer experiments (described in \S~\ref{heuristic_section})
show this heuristic correction to be in very good agreement with reality. However we are not
able to rigorously justify this observation nor obtain a precise asymptotic estimate of the 
error.

\subsection{Acknowledgments}
We would like to thank Jeremy Kahn and Richard Sharp
for some useful conversations about this material.
We would also like to thank the anonymous referee for helpful comments and suggestions.
Danny Calegari was supported by NSF grant DMS 1005246. 

\section{The random reduced word}\label{random_word_section}

\subsection{Reduced words}

Fix a free group $F$ of rank $k$ and a free generating set. The generators
will be denoted $a$, $b$, $c$ and so on, and their inverses by 
$A$, $B$, $C$.

We are interested in random reduced words conditioned to lie in the commutator
subgroup. This is a complicated (non-local) condition to impose
on a word. Fortunately, there is a nice estimate, due to Sharp, of the relative
proportion of words of length $n$ in $[F,F]$.

\begin{theorem}[Sharp \cite{Sharp}, Thm.~1]\label{Sharp_theorem}
Let $F$ be a free group of rank $k\ge 2$. Let $F_n$ denote the set of elements of $F$ of
length $n$, and let $F_n'=F_n\cap [F,F]$. If $n$ is odd, $F_n'$ is empty, whereas
there is an explicit constant $\sigma$ depending on $k$ so that 
$$\lim_{n\to\infty, \; n\text{ even}} \left|\sigma^k n^{k/2} \frac {|F_n'|}{|F_n|} - \frac
{2} {(2\pi)^{k/2}}\right|=0$$
where the limit is taken over {\em even} positive integers $n$.
\end{theorem}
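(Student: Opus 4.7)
The plan is to reduce Sharp's theorem to a local central limit theorem for a Markov chain on the generating set. Reduced words of length $n$ correspond bijectively to sequences $(x_1,\ldots,x_n)$ of elements of $\Sigma := \{a_1^{\pm 1},\ldots,a_k^{\pm 1}\}$ with $x_{i+1}\neq x_i^{-1}$; under the uniform measure on $F_n$ this is a stationary Markov chain with transition matrix $P_{xy} = 1/(2k-1)$ for $y\neq x^{-1}$ and uniform stationary distribution. Let $\phi\colon \Sigma\to\Z^k$ be the abelianization, $\phi(a_j)=e_j$, and set $S_n = \sum_{i=1}^n \phi(x_i)$. Then $|F_n'|/|F_n| = \Pr(S_n = 0)$, and the theorem reduces to the local CLT
$$\Pr(S_n = 0) \;=\; \frac{2}{(2\pi\sigma^2 n)^{k/2}}\bigl(1+o(1)\bigr), \qquad n \text{ even},$$
for an explicit $\sigma > 0$ to be identified.

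I would establish this by Fourier inversion on the dual torus $[-\pi,\pi]^k$:
$$\Pr(S_n=0) \;=\; (2\pi)^{-k}\int_{[-\pi,\pi]^k}\Ex\bigl[e^{i\theta\cdot S_n}\bigr]\,d\theta,$$
and analyze the integrand via the twisted transfer operator $(P_\theta)_{xy} := P_{xy}\,e^{i\theta\cdot\phi(y)}$, whose spectral radius governs $|\Ex[e^{i\theta\cdot S_n}]|$. At $\theta=0$, Perron--Frobenius yields a simple dominant eigenvalue $\lambda(0)=1$ with spectral gap. The hyperoctahedral group $(\Z/2)^k\rtimes S_k$ acts on $\Sigma$ commuting with $P$, so $\lambda$ is a smooth function of $\theta$ near $0$ with vanishing gradient and Hessian proportional to the identity, giving the expansion $\lambda(\theta) = 1 - (\sigma^2/2)|\theta|^2 + O(|\theta|^4)$. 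Rescaling $\theta = \eta/\sqrt n$ turns the integrand over a shrinking neighborhood of $0$ into a Gaussian that evaluates to $(2\pi\sigma^2 n)^{-k/2}(1+o(1))$.

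The additional factor of $2$ arises from bipartiteness: each step changes $\|S_n\|_1$ by $\pm 1$, so $S_n=0$ forces $n$ even, and at the ``antipode'' $\theta_0 = (\pi,\ldots,\pi)$ the twisted operator satisfies $\lambda(\theta_0) = -1$ since $e^{i\theta_0\cdot\phi(x)}\equiv -1$ on $\Sigma$. For even $n$ the contribution of a neighborhood of $\theta_0$ equals that of $0$, doubling the Gaussian constant. The main technical obstacle is a \emph{uniform spectral gap}: one must show that every eigenvalue $\mu$ of $P_\theta$ satisfies $|\mu|\le 1-\delta$ for $\theta$ bounded away from $\{0,\theta_0\}$, so that the integral off small neighborhoods of these two points is $O(e^{-\delta n})$. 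This is delicate because $P_\theta$ is non-self-adjoint, but it follows from a compactness argument together with the observation that $|P_\theta|\le P$ entrywise with equality only at $\theta\in\{0,\theta_0\}$ mod $2\pi\Z^k$, combined with a continuity estimate for the Perron eigenvalue. Finally $\sigma$ is pinned down using the standard asymptotic-variance formula for Markov chains,
$$\sigma^2 I \;=\; \Ex_\pi\bigl[\phi\phi^\top\bigr] + 2\sum_{j\ge 1}\Ex_\pi\bigl[\phi(x_0)\phi(x_j)^\top\bigr],$$
together with the no-backtracking identity $\Ex[\phi(x_{i+1})\mid x_i] = \phi(x_i)/(2k-1)$, which collapses the sum into a geometric series and yields the closed form $\sigma = 1/\sqrt{k-1}$.
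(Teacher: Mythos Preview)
The paper does not prove this theorem at all: it is stated as Theorem~\ref{Sharp_theorem} with attribution to Sharp~\cite{Sharp}, Thm.~1, and used as a black box (the only consequence drawn is that conditioning on $v\in[F,F]$ costs at most a polynomial factor $O(n^{k/2})$ in probability). So there is no ``paper's own proof'' to compare against.

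That said, your sketch is a correct outline of the standard proof, and is in the same spirit as Sharp's own argument, which also proceeds via perturbation theory for a twisted transfer operator and a saddle-point/Fourier analysis near the dominant eigenvalue. Your identification of the two critical points $0$ and $\theta_0=(\pi,\dots,\pi)$ is right and accounts cleanly for both the parity obstruction and the factor $2$; your computation $\Ex[\phi(x_{i+1})\mid x_i]=\phi(x_i)/(2k-1)$ and the resulting geometric series giving $\sigma^2=1/(k-1)$ are correct and match Sharp's constant. The one place where your write-up is thin is the uniform spectral gap away from $\{0,\theta_0\}$: the entrywise bound $|P_\theta|\le P$ gives spectral radius $\le 1$, but to get strict inequality you need to rule out the existence of a measurable $u\colon\Sigma\to S^1$ with $e^{i\theta\cdot\phi(y)}=u(x)\overline{u(y)}$ on the support of $P$ (a coboundary condition), which for this chain reduces to checking that $\theta\cdot\phi$ is constant modulo $2\pi$ on closed orbits only when $\theta\in\{0,\theta_0\}+2\pi\Z^k$. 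This is elementary here (test on the length-two cycles $a_i a_j$ with $i\ne j$), but it is the step that actually pins down the set of critical points and deserves to be made explicit.
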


This theorem has the following consequence. Suppose that a random element of $F_n$
has some property $P$ with probability $1-o(n^{-k/2})$. Then a random element
of $F_n'$ has property $P$ with probability $1-o(1)$.
In practice, we are interested in properties of random elements 
in $F_n$ that hold with probability $1-O(C^{-n^c})$ for some constants $C>1,c>0$, or
with probability $1-O(n^{-C})$ for all $C>0$, and Sharp's theorem is the fundamental
tool that lets us draw conclusions about random elements of $F_n'$.

\medskip

In the sequel we use the following notation consistently, where possible.
We let $v$ denote a random reduced word of length $n$, and let 
$m(n,k)$ (or just $m$ for brevity) be defined by $m(n,k):=\log(n)/\log(2k-1)$. 
There is a stationary Markov process which produces random reduced words in $F$ 
with the uniform probability, and $\log(2k-1)$ is the entropy of this process.

\subsection{Phase transition}

The constant $m=\log(n)/\log(2k-1)$ is a {\em natural length scale} on which to 
view subwords of a random word of length $n$. A random word of length $100$ like
{\par\noindent \, \tinyv \it  bbbbaBAbAABaBaabbabbaBAABBAABBAbabAAbbABBBAbaaaaBAAbbABaBabaBaBAbAABBBBaBabbaaBAAABaBabAbABaaaabbbAA}
\par\noindent does not look homogeneous to the naked eye; the long strings of capital letters
leap out and draw the reader's attention to specific locations in the word. 
The meaning of the scale $m$ is that
a random word of length $n$ (for sufficiently large $n$) looks {\em homogeneous} on scales
smaller than $m$, and {\em heterogeneous} on scales larger than $m$. However for this phase
transition to become truly apparent, one must take $n$ very large, so that $m\sim\log(n)\gg 1$.

One way to quantify this distinction is to fix a length $\ell$ and compute some statistic 
associated to the set of subwords of $v$ of length $\ell$. Each subword is an element of
$F_{\ell}$ (the set of elements of $F$ of length $\ell$), and a natural number to count is 
$$A_\ell(v):= \frac 1 {2n} \sum_{w\in F_\ell} |\,\text{copies of } w \text{ in } v - \text{copies of } w^{-1}
\text{ in } v \,|$$
If $v$ is cyclically reduced, and one counts copies in the cyclic word $v$, then $A_\ell(v)\in [0,1]$
with $A_\ell(v)=1$ if and only if no inverse pair of subwords of length $\ell$ appear.
There is a {\em phase transition} in $A_\ell$: for $\ell = Lm$ for some fixed $L<1$ we have $A_\ell(v) \to 0$ in probability, whereas for 
$\ell = Lm$ for some fixed $L>1$ we have $A_\ell(v) \to 1$. This is proved in 
\S~\ref{L<1subsection}--\ref{L>1subsection}.

For words of length $n=10000$ in rank $k=2$ we have $\log(n)/\log(2k-1)\approx 8.383613$. We
compute $A_\ell(v)$ for a random word $v$ in $[F,F]$ of length $10000$ for $1\le \ell\le 11$ 
(there are $236196$ reduced words of length $11$). This data is presented in 
Figure~\ref{cancellation_histogram_figure}. Note that conditioning $v$ to lie in $[F,F]$
forces $A_1(v)=0$. The figure hints at a phase transition at $\ell \sim m$ but for it to be
really sharp, one would need to take something like $n\sim \text{googol}$.

\begin{figure}[htpb]
\labellist
\small\hair 2pt
\pinlabel $A_\ell(v)$ at -18 40
\pinlabel $0$ at -5 15
\pinlabel $1$ at -5 65
\pinlabel $\ell$ at 320 16
\pinlabel $1$ at 5 5
\pinlabel $6$ at 155 5
\pinlabel $11$ at 305 5
\pinlabel $m$ at 227 2
\endlabellist
\centering
\includegraphics[scale=1]{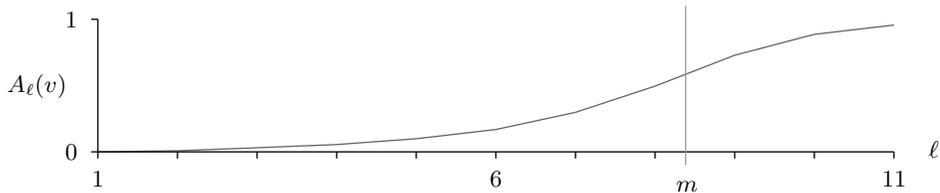}
\caption{Values of $A_\ell(v)$ for $v$ a random word of length $10000$ and
$1\le \ell \le 12$.}\label{cancellation_histogram_figure}
\end{figure}

\subsection{Counting functions and counting measures}\label{counting_function_subsection}

We use the notation $F_i$, $F_{<i}$, $F_{\ge i}$ and so on for the set of
elements in $F$ of length $i$, $<i$, $\ge i$ respectively. A {\em random word} 
of length $n$ is an element of $F_n$, chosen with the uniform probability measure.
Note that the cardinality of $F_n$ is $(2k)(2k-1)^{n-1}$, so $|F_{Lm}| \sim n^L$.

Although it does not add much technically, we think of $F$ as a measure space, 
with the Borel algebra consisting of all subsets. Consequently any function 
$f$ on $F$ is measurable, and a function $f$ is in $L^1(F)$ if and only if 
$\sum_{g\in F} |f(g)| < \infty$. 

\begin{definition}
For a reduced word $\sigma$, the {\em counting function} $C_\sigma$ is defined by
$$C_\sigma(v) = \text{number of copies of }\sigma\text{ in }v$$
and the {\em counting measure} $C(v)$ is the measure  on $F$ of total
mass $|v|(|v|-1)/2$ for which $C(v)(\sigma) = C_\sigma(v)$.

For $f$ a measurable function on $F$, define
$$C_f(v) = \int_F f dC(v)$$
and define $H_f(v):=C_f(v) - C_f(v^{-1})$.
\end{definition}

\subsection{Accurately estimating $C_\sigma(v)$}\label{L<1subsection}

If $v$ is a random word of length $n$, and $\sigma$ is a random word of length $Lm$ where
$L<1$, we need to estimate $C_\sigma(v)$. Since $v$ contains $n-|\sigma|+1$
subwords of length $|\sigma|$, the ``expected'' number of copies of $\sigma$ in $v$
is $(n-|\sigma|+1)/|F_{|\sigma|}|= n^{1-L}(2k)/(2k-1) \pm O(\log(n))$.
If subwords were independent, one would expect the deviation from this expected value
to be typically of order $n^{(1-L)/2}$, and to be of order $n^{\epsilon+(1-L)/2}$ only
with exponentially vanishing probability. This is what we prove:

\begin{proposition}\label{small_error}
Let $L<1$. Then for any $\epsilon>0$ there are constants $C>1$ and $c>0$ so that
$$\Pr\left(|C_\sigma(v) - n/|F_{Lm}||<n^{\epsilon+(1-L)/2}\text{ for all }\sigma \in F_{Lm}\right) = 1-O(C^{-n^c})$$
\end{proposition}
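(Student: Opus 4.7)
The plan is to exploit the Markov structure of the uniform measure on $F_n$. Writing $\ell = Lm$ and $X_i := \mathbf{1}[v_iv_{i+1}\cdots v_{i+\ell-1} = \sigma]$, we decompose $C_\sigma(v) = \sum_{i=1}^{n-\ell+1} X_i$. A uniform sample from $F_n$ is exactly a run of the stationary Markov chain on the $2k$ generators whose transitions choose uniformly among the $2k-1$ non-cancelling continuations; under the stationary distribution $\Ex[X_i] = 1/|F_\ell|$, so
$$\Ex[C_\sigma(v)] = (n-\ell+1)/|F_\ell| = n/|F_\ell| + O(\log(n)\, n^{-L}).$$
The boundary correction is much smaller than $n^{\epsilon + (1-L)/2}$, so it suffices to prove concentration of $C_\sigma(v)$ about its mean.

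The concentration comes from a Bernstein-type inequality applied to the extended Markov chain whose state at time $i$ is the length-$\ell$ window $v_i\cdots v_{i+\ell-1}$. This extended chain has uniform stationary measure on $F_\ell$ and spectral gap uniformly bounded away from zero (in fact the one-letter chain already has all non-trivial eigenvalues of magnitude $1/(2k-1)$), and under it the $X_i$ become indicator functions of a single state with stationary mean and variance both $\Theta(n^{-L})$. A standard Markov-chain Bernstein inequality (e.g.\/ Lezaud, Paulin) then yields
$$\Pr\big(|C_\sigma(v) - \Ex[C_\sigma(v)]| > t\big) \le 2\exp\!\left(-\frac{c\,t^2}{n V_\sigma + \ell t}\right)$$
with $V_\sigma = \Theta(n^{-L})$. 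For $t = n^{\epsilon+(1-L)/2}$, the first term $t^2/(nV_\sigma) = \Theta(n^{2\epsilon})$ dominates up to logarithms, so each fixed $\sigma$ violates the estimate with probability at most $\exp(-c' n^{\epsilon})$ (absorbing $\log$ factors into $c'$).

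A union bound over the $|F_\ell|\le n^L$ choices of $\sigma$ absorbs a polynomial factor and still leaves total failure probability $O(C^{-n^c})$ for any $C>1$ and some $c>0$ slightly smaller than $\epsilon$. The main technical obstacle is invoking the Markov-chain Bernstein inequality with the correct (small) variance parameter at this length scale; a clean alternative, avoiding any black-box quotation, is to partition $\{1,\ldots,n-\ell+1\}$ into $\ell$ arithmetic progressions of step $\ell$, on each of which the window-states occur at chain times separated by $\ell$ steps and are therefore $(1/(2k-1))^\ell = n^{-L}$-close in total variation to an i.i.d.\/ sequence. Then classical Bernstein applies on each progression, and the resulting coupling error, summed over $n/\ell$ blocks and $n^L$ choices of $\sigma$, is easily absorbed into the target $n^{\epsilon+(1-L)/2}$ tolerance.
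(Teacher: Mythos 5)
Your overall architecture --- write $C_\sigma(v)$ as a sum of window indicators along the stationary Markov chain, compute the mean exactly, prove concentration for each fixed $\sigma$ by comparison with independent Bernoulli trials, and finish with a union bound over the $\le n^L$ words --- is the same as the paper's. But the key quantitative step fails in both routes you offer. For your ``clean alternative'': windows sampled along an arithmetic progression of step $\ell=Lm$ are \emph{adjacent}, and the conditional law of a window given the previous one is not $n^{-L}$-close to uniform in total variation; it is at \emph{constant} TV distance, because the first letter of the new window is forbidden from cancelling the last letter of the old one. On the event that the window equals $\sigma$, the conditional probability is either $0$ or $\tfrac{2k}{2k-1}|F_\ell|^{-1}$, a relative error of constant size, so the stochastic-domination/Chernoff comparison you propose only localizes the count to within $\Theta(n^{1-L})$ --- far larger than the target tolerance $n^{\epsilon+(1-L)/2}$. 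Even inserting a gap of one full window length between consecutive samples is not enough: the correlation decays like $(2k-2)^{-g}$ in the gap length $g$ (this is the content of Lemma~\ref{decay_of_correlation}), so a gap of $Lm$ gives a per-trial error of roughly $n^{-0.6L}$ and an accumulated error near $n^{0.4}$ when $L$ is close to $1$, still swamping $n^{\epsilon+(1-L)/2}\approx n^{\epsilon}$. This is exactly why the paper samples windows at step $Nm$ with $N\ge 3$, so that consecutive samples are separated by at least $2m$ letters and the per-trial deviation from uniformity is at most $(2k-2)^{-2m}\le n^{-1.2}$, which \emph{is} absorbable over $n$ trials.

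For your first route, note that the paper explicitly cautions in Appendix~\ref{Markov_chain_section} that Lezaud's inequality is not in itself strong enough to derive Proposition~\ref{small_error}: the version quoted there normalizes $f$ by its sup and $L^2$ norms and yields an exponent $-\lambda_1 n\gamma^2/8=-\Theta(n^{2\epsilon-L})$, which tends to $0$ for $L$ close to $1$. So you cannot simply cite a Markov-chain Bernstein inequality as a black box; you would have to verify that a genuinely variance-adaptive version applies to this non-reversible chain with the correct small variance parameter, keeping in mind that the natural mixing time of the \emph{windowed} chain is $\Theta(\ell)$. The simpler repair is the paper's: thin the sample to step $3m$, prove exponential decay of correlations across the gap by the explicit counting argument of Lemma~\ref{decay_of_correlation}, and only then invoke the classical Chernoff bound.
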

\begin{proof}
The strategy is as follows.
We first show that for each fixed word $\sigma$ of length $Lm$ the
inequality
$\Pr\left(|C_\sigma(v) - n/|F_{|\sigma|}||<n^{\epsilon+(1-L)/2}\right) = 1-O(C^{-n^c})$
holds.
Since there are only $O(n^L) < n$ words of length $Lm$, it will follow
that the desired estimate will hold for {\em every} $\sigma\in F_{Lm}$ with probability
$1-O(nC^{-n^c})$. Absorbing the $n$ factor into the constants $C$ and $c$, we will
be done.

Choose some constant $N$ (we will decide on the exact value of $N$ later). 
For each residue class $j$ mod $Nm$, let $v_{j,i}$ be the subword of
$v$ of length $Lm$ which starts at the $j+iNm$th letter of $v$. The point is that 
for fixed $j$, the $v_{j,i}$ for consecutive $i$ 
are ``almost'' independent. This is made precise in the following
lemma:

\begin{lemma}\label{decay_of_correlation}
For any two words $x$, $y$ of length $|\sigma|$, there is an inequality
$$|\Pr\left(v_{j,i}=x \; | \; v_{j,i-1}=y\right) - 1/|F_{|\sigma|}|| \le (2k-2)^{-(N-1)m}$$
\end{lemma}
\begin{proof}
Let $yuz$ be the subword of $v$ starting at $y$, where $u$ has length $(N-1)m$.
The number of words $u$ of fixed length for which $yuz$ is reduced depends only on the length of $u$,
the last letter of $y$, and the first letter of $z$. For any single letters $a,b$ we let
$u_m(a,b)$ denote the number of reduced words of the form $aub$ of length $m+2$.
We show by induction on $m$ that the following two statements are true:
\begin{enumerate}
\item{$u_m(a,b)=u_m(a,c)$ if neither of $b,c$ are equal to $a^{-1^{m+1}}$}
\item{$|u_m(a,a^{-1^{m+1}})/u_m(a,b) - 1| \le (2k-2)^{-m}$}
\end{enumerate}
Since $u_1(a,b)=(2k-2)$ if $b\ne a$ and $u_1(a,a) = (2k-1)$ this is true for $m=1$.

Assume it is true for $(m-1)$ odd (for example). Then depending on the first letter
of $u$ we have two cases (by the induction step), and we deduce
$$u_m(a,b) = u_{m-1}(b,b) + (2k-2)u_{m-1}(c,b) \text{ for } b\ne A, c\ne b$$
$$u_m(a,A) = (2k-1)u_{m-1}(c,A) \text{ for } c\ne A$$
and the induction step is proved. The case $(m-1)$ even is analogous. The lemma follows.
\end{proof}

We resume the proof of Proposition~\ref{small_error}. By Lemma~\ref{decay_of_correlation},
the probability that $v_{j,i}=\sigma$ conditioned on the value of $v_{j,i-1}$ is very nearly
independent of the value of $v_{j,i-1}$, so we can compare 
the number of $\sigma$s
among the $v_{j,i}$ (for fixed $j$) with a sum of independent 
Bernoulli variables, and estimate the deviation from the mean using
the Chernoff bound.
Let $C_{\sigma,j}(v)$ be the number
of copies of $\sigma$ among the $v_{j,i}$.

\begin{lemma}
Suppose $N\ge 3$. For each $j$, and for any positive $\epsilon$, there is an inequality
$$\Pr\left(|C_{\sigma,j}(v) - n/(Nm\cdot|F_{|\sigma|}|)| > n^{\epsilon + (1-L)/2} \right) = O(C^{-n^c})$$
\end{lemma}
\begin{proof}
By Lemma~\ref{decay_of_correlation}, the conditional probability
that successive $v_{j,i}$ are equal to $\sigma$ is never more than 
$1/|F_{|\sigma|}| + (2k-2)^{-(N-1)m}$, or less than
$1/|F_{|\sigma|}| - (2k-2)^{-(N-1)m}$.
So we can bound the probability of a large deviation in terms of such large deviations
for sums of independent Bernoulli trials. 

Since $(2k-2)^{-(N-1)m} \le n^{-0.6(N-1)}$
(using the estimate $\log(2k-2)/\log(2k-1)>0.6$ for $k\ge 2$), 
when $N\ge 3$ we have $(2k-2)^{-(N-1)m} \le n^{-1}$. 

We have the Chernoff bound (e.g.\/ the upper bound in Thm.~1.3.13 from \cite{Stroock})
$$\Pr(|S_n -np| \ge \delta np) \le e^{-\delta^2np/3}$$
where $S_n$ is a sum of $n$ independent Bernoulli random variables with parameter $p$.
Using $p_+=n^{-L}(2k-1)/(2k) + n^{-0.6(N-1)}< n^{-L}$, we obtain
$$\Pr(C_{\sigma,j}(v) - n/(Nm\cdot|F_{|\sigma|}|) - n^{1-0.6(N-1)}/Nm \ge \delta np_+/Nm) \le e^{-\delta^2np_+/3Nm}$$
Since $Nm=O(\log(n))$ and $n^{1-0.6(N-1)}/Nm<1$, 
taking $\delta=n^{\epsilon - (1-L)/2}Nm$ this implies
$$\Pr(C_{\sigma,j}(v) - n/(Nm\cdot|F_{|\sigma|}|) \ge n^{\epsilon + (1-L)/2}) \le O(C^{-n^c})$$
where $C>1$, $c>0$ depend only on $\epsilon$.

A similar inequality holds for $n/(Nm\cdot|F_{|\sigma|}|) - C_{\sigma,j}(v)$.
\end{proof}

We now complete the proof of Proposition~\ref{small_error}.
Since $j$ was arbitrary, it follows that {\em every} $C_{\sigma,j}(v)$ deviates from
$n/(Nm\cdot|F_{|\sigma|}|)$ by at most $n^{\epsilon + (1-L)/2}$, with probability at
least $1-Nm\cdot O(C^{-n^c})$ which is still $1-O(C^{-n^c})$. Hence
$C_\sigma(v) = \sum_j C_{\sigma,j}(v)$ deviates from $n/|F_{|\sigma|}|$ by at most
$Nm\cdot n^{\epsilon + (1-L)/2} < n^{\epsilon' + (1-L)/2}$ with the same probability.
The proposition follows.
\end{proof}

In Appendix~\ref{Markov_chain_section}, 
we compare this result with Chernoff-type inequalities for
nonreversible Markov chains obtained by Lezaud, Dinwoodie and others, and interpret
such bounds in terms of the Cheeger constants of certain directed graphs.

\subsection{Bounding $\sum_\sigma C_\sigma(v^{-1})$}\label{L>1subsection}

We now turn our attention to words of length $>m$. Fix some $L>1$, and let $S$ 
be the set of subwords of $v$ of length $Lm$.

\begin{proposition}\label{long_inverse_word_bound}
For any $\epsilon$ there are constants $C>1$ and $c>0$ so that
$$\Pr\left( \sum_{\sigma \in S} C_\sigma(v^{-1}) < n^{2-L+\epsilon} \right) = 1 - O(C^{-n^c})$$
In particular, for $\epsilon < L-1$,
with probability $1-O(C^{-n^c})$ there is a subset $S'$ of $S$ with
$$\text{card}(S - S') < n^{2-L+\epsilon} = o(n/\log(n))$$ 
so that no element $\sigma \in S'$ appears in $v^{-1}$.
\end{proposition}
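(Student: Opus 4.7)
The plan is to bound the stated sum by a count of mutually-inverse subword pairs inside $v$, estimate its first moment via Lemma~\ref{decay_of_correlation}, and promote this to exponential concentration by the same block strategy as in Proposition~\ref{small_error}. Writing $\sigma_i$ for the length-$Lm$ subword of $v$ starting at position $i$, and noting that $v^{-1}$ is $v$ read backwards with each letter inverted, every length-$Lm$ subword of $v^{-1}$ has the form $\sigma_{i'}^{-1}$ for a unique $i'$. Hence
$$\sum_{\sigma\in S}C_\sigma(v^{-1})\;=\;\left|\{i':\sigma_{i'}^{-1}\in S\}\right|\;\le\;Y(v),\qquad Y(v):=\left|\{(i,i'):\sigma_i=\sigma_{i'}^{-1}\}\right|,$$
so it suffices to control $Y(v)$.

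For a pair $(i,i')$ whose subwords do not overlap---which covers all but $O(n\log n)$ of the $\approx n^2$ pairs---Lemma~\ref{decay_of_correlation} yields $\Pr(\sigma_i=\sigma_{i'}^{-1})\le(1+o(1))/|F_{Lm}|=O(n^{-L})$, and for overlapping pairs the inverse relation imposes a palindromic constraint on the overlap that gives at worst the same bound. Summing, $\Ex[Y(v)]=O(n^{2-L})$, so the threshold $n^{2-L+\epsilon}$ sits a factor $n^{\epsilon}$ above the mean.

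To obtain the exponential tail I would partition $\{1,\ldots,n\}$ into blocks of length $Nm$ for a constant $N$ chosen large in terms of $\epsilon$, and for each pair of residue classes modulo $Nm$ express the restricted contribution to $Y(v)$ as a sum of nearly-independent Bernoullis via Lemma~\ref{decay_of_correlation}. The coupling error $(2k-2)^{-(N-1)m}\le n^{-0.6(N-1)}$ is smaller than any prescribed polynomial once $N$ is large, after which a Chernoff bound places the upper tail of the restricted count above $n^{2-L+\epsilon}/(Nm)^2$ at $O(C^{-n^c})$. A union bound over the $O((Nm)^2)$ residue-class pairs, together with a direct estimate for the $O(n\log n)$ overlapping diagonal strip, completes the proof.

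The main obstacle will be the compounded error control: the coupling error from Lemma~\ref{decay_of_correlation} and the $\mathrm{poly}(\log n)$ factors from summing residue classes both have to be absorbed into the Chernoff slack, forcing $N$ to be tuned carefully to the desired exponents. A subtler difficulty arises when $L\ge 2$, where $n^{2-L+\epsilon}$ can drop below $1$ and the assertion collapses to $Y(v)=0$ with exponentially high probability; here Markov's inequality only gives polynomial decay, and one must iterate the Chernoff estimate across many long independent sub-blocks of $v$ to extract an exponential rate.
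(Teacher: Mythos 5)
Your reduction to the pair count $Y(v)$ and the first-moment estimate $\Ex[Y(v)]=O(n^{2-L})$ are fine, but the concentration step contains a genuine gap. For a fixed pair of residue classes mod $Nm$, the restricted contribution to $Y(v)$ is $\sum_{i,i'}[\sigma_i=\sigma_{i'}^{-1}]$, a sum over roughly $(n/Nm)^2$ \emph{pairs} of positions. This is a quadratic functional of the underlying Markov chain, not a sum of nearly independent Bernoullis: any two pairs sharing an index $i$ both depend on the same subword $\sigma_i$, so the family does not admit the product-type coupling that Lemma~\ref{decay_of_correlation} supplies for consecutive blocks within a single residue class (which is what drives Proposition~\ref{small_error}), and the Chernoff bound for independent trials simply does not apply to it. This is precisely the obstruction the paper flags in its remark: the set $S$ is a strongly $v$-dependent random set, so one cannot condition on the ``target'' words as if they were fixed in advance. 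Your closing worry about $L\ge 2$ is a symptom of the same issue, and ``iterating Chernoff over long sub-blocks'' is not a proof.

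The repair is to keep the \emph{linear} statistic you already wrote down in your first display, $\#\{i':\sigma_{i'}^{-1}\in S\}$, instead of passing to $Y(v)$. Two ingredients make this work, and they are the paper's route. First, a deterministic observation (Lemma~\ref{no_overlap_inverse}): in a reduced word a copy of $\sigma$ can never overlap a copy of $\sigma^{-1}$, since the overlap would force a subword $y$ with $y=y^{-1}$; so your ``palindromic constraint'' actually kills the overlapping pairs outright, and, more importantly, it lets one replace $S$ by $S_{<i}\cup S_{>i}$, the subwords lying entirely before or entirely after the subword at position $i$. Second, $S_{<i}$ is determined by $v_{<i}$ and has at most $n$ elements, while conditioned on $v_{<i}$ the word $v_i$ is uniform over $(2k-1)^{Lm}>n^{L-\epsilon}$ possibilities; hence $\Pr(v_i^{-1}\in S_{<i}\mid v_{<i})\le n^{1-L+\epsilon}$ uniformly in the past. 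Along each residue class mod $Lm$ the resulting indicators are therefore stochastically dominated by i.i.d.\ Bernoulli variables of parameter $n^{1-L+\epsilon}$, Chernoff applies to each class, and a union bound over the $O(\log n)$ classes together with the time-reversed argument for $S_{>i}$ completes the proof.
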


\begin{remark}
Note that we think of $S$ just as a set, {\em not} a set with multiplicity. For applications,
it will be important to show that the cardinality of $S$ is close to $n$ with probability
$1-O(C^{-n^c})$; we show this as Proposition~\ref{large_cardinality}.
\end{remark}

\begin{remark}
The set of words of length $Lm$ has cardinality of order $n^L$, so the subset $S$ has
measure of order $n^{1-L}$. If we fix in advance any subset $S$ of $F_{Lm}$ of measure $n^{1-L}$,
a robust Chernoff-type bound for Markov chains due to Lezaud (see 
Appendix~\ref{Markov_chain_section}) gives a bound on $\sum_{\sigma \in S} C_\sigma(v^{-1})$.
However this estimate cannot be applied naively to our context, since $S$ depends (very
strongly) on $v$.
\end{remark}

\begin{proof}
It is awkward to find a purely probabilistic proof of this estimate, because
overlapping subwords of $v$ are necessarily very highly correlated. The non-proba{-}bilistic
ingredient in our proof is the following simple, but important observation:
\begin{lemma}\label{no_overlap_inverse}
Let $v$ be a reduced word. Then for any reduced word $\sigma$, no copy of 
$\sigma$ in $v$ can overlap a copy of $\sigma^{-1}$.
\end{lemma}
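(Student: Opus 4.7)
The plan is to argue by contradiction: suppose a copy of $\sigma$ starts at position $i$ of $v$ and a copy of $\sigma^{-1}$ starts at position $j$, with the two occurrences sharing at least one letter position. After possibly swapping the two copies, I may assume $i \le j$ and set $d := j - i$, so that $0 \le d < \ell$ where $\ell := |\sigma|$. Write $\sigma = s_1 s_2 \cdots s_\ell$ so that $\sigma^{-1} = s_\ell^{-1} s_{\ell-1}^{-1} \cdots s_1^{-1}$. The goal is to extract from the overlap an identity of the form $w = w^{-1}$ for some nonempty reduced word $w$, which is impossible in a free group.

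The bookkeeping step is to compare the two descriptions of each letter of $v$ that lies in the overlap region, i.e., at the positions $i+d, i+d+1, \ldots, i+\ell-1$. Reading from the copy of $\sigma$, the letter at position $i+d+t$ (for $0 \le t \le \ell-d-1$) is $s_{d+t+1}$; reading from the copy of $\sigma^{-1}$ it is $s_{\ell-t}^{-1}$. Setting $w := s_{d+1} s_{d+2} \cdots s_\ell$, a word of length $\ell - d > 0$, these identifications say exactly that the $r$-th letter of $w$ equals the inverse of the $(\ell-d+1-r)$-th letter of $w$ for every $r$, i.e., $w$ is equal to $w^{-1}$ letter by letter.

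Now comes the conceptual punchline. Since $w$ is a contiguous subword of the reduced word $v$, $w$ is itself reduced, and it is nontrivial because $\ell - d \ge 1$. But the equality $w = w^{-1}$ of reduced words implies $w^2 = 1$ as an element of the free group $F$, contradicting the fact that $F$ is torsion-free. This contradiction proves the lemma.

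The only mild obstacle is getting the indexing in the overlap right and verifying that the extracted word $w$ is genuinely a subword of $v$ (hence reduced); once that is straight, the argument is essentially a one-line appeal to torsion-freeness. Note that the degenerate case $d = 0$ (the two copies start at the same letter) is handled uniformly: then $w = \sigma$, and one concludes $\sigma = 1$, so no such nonempty $\sigma$ can exist.
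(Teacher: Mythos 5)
Your proof is correct and takes essentially the same approach as the paper: the paper likewise writes $\sigma = xy$ where the overlap forces $y = y^{-1}$ (your $w$ is exactly this suffix $y$) and declares that absurd. You have merely made the indexing explicit and spelled out the final contradiction via torsion-freeness of the free group.
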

\begin{proof}
If $\sigma$ overlaps $\sigma^{-1}$, then without loss of generality we can write $\sigma$
as $xy$ where $y=y^{-1}$. But this is absurd.
\end{proof}
Now, for each $i$, let $v_i$ be the subword of $v$ of length $Lm$ starting at the $i$th
letter, and let $v_{<i}$ and $v_{>i}$ denote the part of $v$ outside $v_i$, so that
$v=v_{<i}v_iv_{>i}$ as a reduced word. Further, let $S_{<i}$ (resp. $S_{>i}$) denote
the subset of $S$ consisting of subwords of length $Lm$ in $v_{<i}$ (resp. $v_{>i}$). By
Lemma~\ref{no_overlap_inverse},
$$\sum_{\sigma \in S} C_\sigma(v^{-1}) = \sum_i \sum_{\sigma \in S_{<i}} C_\sigma(v_i^{-1}) + \sum_i \sum_{\sigma \in S_{>i}} C_\sigma(v_i^{-1})$$
The point is that we can bound $\sum_{\sigma \in S_i} C_\sigma(v_i^{-1})$ in probability
conditioned on $v_{<i}$, {\em independently} of $v_{<i}$.
\begin{lemma}
For any $\epsilon$,
$$\Pr\Bigl( \sum_{\sigma \in S_{<i}}C_\sigma(v_i^{-1}) = 1 \; | \; v_{<i} \Bigr) < n^{1-L+\epsilon}$$
\end{lemma}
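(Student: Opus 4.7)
The plan exploits that $|v_i| = |\sigma| = Lm$ for every $\sigma \in S_{<i}$: a copy of $\sigma$ inside $v_i^{-1}$ can only fit as all of $v_i^{-1}$, so $C_\sigma(v_i^{-1})$ equals $1$ if $\sigma = v_i^{-1}$ and $0$ otherwise. Hence $\sum_{\sigma \in S_{<i}} C_\sigma(v_i^{-1}) \in \{0,1\}$, and the sum equals $1$ precisely when $v_i^{-1}$ occurs as one of the length-$Lm$ subwords of $v_{<i}$, i.e.\ when $v_i^{-1} \in S_{<i}$. The lemma thus reduces to bounding $\Pr(v_i^{-1} \in S_{<i} \mid v_{<i})$.

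Conditioned on $v_{<i}$, the set $S_{<i}$ is determined, and has cardinality at most $|v_{<i}| - Lm + 1 \le n$, since its elements are indexed by their starting positions in $v_{<i}$. I would then estimate, uniformly in a fixed reduced $\tau \in F_{Lm}$ compatible with the last letter of $v_{<i}$, the conditional probability $\Pr(v_i = \tau^{-1} \mid v_{<i})$. The Markov process producing random reduced words assigns each such choice probability exactly $(2k-1)^{-Lm} = n^{-L}$. The additional conditioning that $v \in [F,F]$ distorts this by the ratio
\[
\frac{\#\{w \in F_{n-i-Lm} : \mathrm{ab}(w) = -\mathrm{ab}(v_{<i}\tau^{-1})\}}{\#\{w \in F_{n-i-Lm} : \mathrm{ab}(w) = -\mathrm{ab}(v_{<i})\}},
\]
which by the local central limit theorem underlying Sharp's Theorem~\ref{Sharp_theorem} is $1 + o(1)$, since $\mathrm{ab}(\tau^{-1})$ has $\ell_\infty$ norm $O(\log n) = o(\sqrt{n})$ while the Gaussian limit density on $\Z^k$ is smooth at that scale. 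Hence $\Pr(v_i = \tau^{-1} \mid v_{<i}) = O(n^{-L})$, uniformly in $\tau$.

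A union bound over the at most $n$ elements of $S_{<i}$ then gives
\[
\Pr(v_i^{-1} \in S_{<i} \mid v_{<i}) \le n \cdot O(n^{-L}) = O(n^{1-L}) < n^{1-L+\epsilon}
\]
for $n$ sufficiently large. The main technical step is the local limit estimate: one must show that the conditional distribution of $v_i$ given $v_{<i}$ inside $F_n'$ is close to the unconditioned Markov distribution on reduced words, uniformly as $\tau$ ranges over all valid continuations whose abelianizations differ by at most $O(\log n)$. Everything else is an elementary length count followed by a union bound, with the $n^\epsilon$ slack absorbing implicit constants and the $1+o(1)$ local-limit correction.
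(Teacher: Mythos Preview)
Your core argument matches the paper's exactly: since $|v_i|=|\sigma|=Lm$, the sum $\sum_{\sigma\in S_{<i}} C_\sigma(v_i^{-1})$ is $0$ or $1$ and equals $1$ iff $v_i^{-1}\in S_{<i}$; then $|S_{<i}|\le i<n$ and each fixed value of $v_i$ has conditional probability about $n^{-L}$, so a union bound finishes.

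The one unnecessary complication is your treatment of the conditioning on $[F,F]$. In the paper's setup for \S~\ref{L>1subsection}, $v$ is a uniform random reduced word in $F_n$, \emph{not} yet conditioned to lie in $[F,F]$; the transfer to $[F,F]$ via Sharp's Theorem~\ref{Sharp_theorem} is applied only later, at the level of the propositions in \S~\ref{random_value_section}. Consequently, conditioned on $v_{<i}$, the subword $v_i$ is \emph{exactly} uniform over the $(2k-1)^{Lm}>n^{L-\epsilon}$ reduced continuations (the $\epsilon$ absorbing the rounding of $Lm$ to an integer), and no local-limit correction is needed. Your appeal to the ratio of abelianization counts is plausible but would require more than the statement of Theorem~\ref{Sharp_theorem} provides, and in any case it is doing no work here. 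Drop that paragraph and your proof is identical to the paper's.
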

\begin{proof}
Note that $\sum_{\sigma \in S_{<i}}(v_i^{-1})$ is $1$ or $0$, depending on whether $v_i^{-1}$ is
in the set $S_{<i}$ or not. No matter what $v_{<i}$ is, there are $(2k-1)^{Lm}>n^{L-\epsilon}$ choices
for $v_i$, and each occurs with the uniform probability. The cardinality of $S_{<i}$ is at most
$i$ which is less than $n$, so the chance that $v_i^{-1}$ is in $S_{<i}$ is at most
$n^{1-L+\epsilon}$, as claimed.
\end{proof}
It follows that if we fix a residue $j$ mod $Lm$, for any $\epsilon$ there are $C>1$, $c>0$ such that
we can estimate
$$\Pr\Bigl( \sum_{i=j\text{ mod }Lm} \sum_{\sigma \in S_{<i}} C_\sigma(v_i^{-1}) \ge n^{2-L+\epsilon}/Lm \Bigr)
< O(C^{-n^c})$$
Summing over all residue classes $j$, and then replacing $S_{<i}$ by $S_{>i}$ by symmetry proves
the proposition.
\end{proof}

As remarked above, it is important for applications to show that the cardinality of $S$ is very
close to $n$, with high probability.

\begin{proposition}\label{large_cardinality}
Fix $L>1$ and let $S$ denote the set of subwords of $v$ of length $Lm$.
There is an $\epsilon$ and $C>1$, $c>0$ so that
$$\Pr(n-\text{card}(S) > n^{1-\epsilon}) = O(C^{-n^c})$$
\end{proposition}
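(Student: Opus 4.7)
The plan is to bound $n - \text{card}(S)$ by the number of positions at which a repeated subword of length $Lm$ starts, and then to estimate this number by a martingale concentration inequality. Let $v_i$ denote the subword $v[i \dots i + Lm - 1]$, and let $X_i$ be the indicator of the event that $v_i = v_j$ for some $j < i$. Since there are $n - Lm + 1$ subwords of length $Lm$ in $v$, we have $\text{card}(S) \ge (n - Lm + 1) - \sum_i X_i$, and therefore $n - \text{card}(S) \le (Lm-1) + \sum_i X_i$. It therefore suffices to show that $\sum_i X_i \le n^{1-\epsilon}$ with probability $1 - O(C^{-n^c})$ for some positive constants $\epsilon$, $c$, and $C > 1$.

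The key estimate is a uniform conditional bound
$$\Pr\bigl( X_i = 1 \; | \; v_{<i} \bigr) \le C' n^{1-L}$$
almost surely, where $v_{<i} := v[1 \dots i-1]$ and $C'$ depends only on $k$ and $L$. To see this, I would use the Markov structure of the uniformly random reduced word: conditional on $v_{<i}$, the distribution of the next $Lm$ letters of $v$ is uniform on reduced extensions, so for any specific $w \in F_{Lm}$, $\Pr(v_i = w \; | \; v_{<i}) \le (2k-1)^{-Lm} = O(n^{-L})$. For $j \le i - Lm$, the subword $v_j$ is determined by $v_{<i}$, so union-bounding over the at most $n$ such $j$ yields an $O(n^{1-L})$ contribution. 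For $j \in (i-Lm, i)$ (the overlapping case), the equality $v_i = v_j$ forces positions $j$ through $i + Lm - 1$ to form an $(i-j)$-periodic word; given $v_{<i}$, this determines $v_i$ uniquely, so the conditional probability is again at most $(2k-1)^{-Lm}$ per value of $j$. A union bound over the $Lm-1$ values of $j$ then gives a negligible $O(Lm \cdot n^{-L})$ contribution.

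Granted this bound, the Azuma--Hoeffding inequality applied to the martingale $\sum_i \bigl(X_i - \Pr(X_i = 1 \; | \; v_{<i})\bigr)$ (whose increments lie in $[-1,1]$) gives
$$\Pr\Bigl(\sum_i X_i > T + t\Bigr) \le e^{-t^2/(2n)},$$
where $T := \sum_i \Pr(X_i = 1 \; | \; v_{<i}) \le C' n^{2-L}$ almost surely. Choosing any $\epsilon$ with $0 < \epsilon < \min\{(L-1)/2,\, 1/4\}$ and setting $t = n^{1-\epsilon}$, one has $T + t \le 2n^{1-\epsilon}$ (since $2 - L < 1 - \epsilon$) together with $1 - 2\epsilon > 0$, so that
$$\Pr\Bigl(\sum_i X_i > 2 n^{1-\epsilon}\Bigr) \le e^{-n^{1 - 2\epsilon}/2} = O(C^{-n^c})$$
with $c = 1 - 2\epsilon$. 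Combined with the $Lm - 1 = O(\log n)$ boundary term, this yields $n - \text{card}(S) \le n^{1-\epsilon/2}$ with the required probability.

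The main technical obstacle is the uniform Markov bound $\Pr(v_i = w \; | \; v_{<i}) \le C/|F_{Lm}|$ for arbitrary $w$; this rests on the observation (already built into Lemma~\ref{decay_of_correlation}) that the number of reduced words of a given length starting from a specified letter is independent of that letter. The other delicate point is the overlapping case: one must notice that, given $v_{<i}$, the equality $v_i = v_j$ becomes a \emph{deterministic} constraint on $v_i$ via forced periodicity, so the overlapping contribution is in fact smaller than a single non-overlap coincidence rather than larger as one might fear.
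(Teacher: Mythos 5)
Your overall strategy --- bounding $n-\text{card}(S)$ by the number of positions $i$ at which $v_i$ repeats an earlier subword, establishing the uniform conditional bound $\Pr(X_i=1 \mid v_{<i}) = O(n^{1-L})$, and then concentrating --- is essentially the paper's strategy, which reduces the proposition to the argument of Proposition~\ref{long_inverse_word_bound}. Your two-case analysis of the conditional bound is sound: for $j\le i-Lm$ the word $v_j$ is indeed determined by $v_{<i}$, and the exact identity $\Pr(v_i=w\mid v_{<i})=(2k-1)^{-Lm}=n^{-L}$ for reduced extensions $w$ gives the $O(n^{1-L})$ union bound; and your observation that an overlapping coincidence $v_i=v_j$ with $i-Lm<j<i$ forces $(i-j)$-periodicity, hence determines $v_i$ uniquely from $v_{<i}$, is correct --- and is cleaner and sharper than the paper's case analysis of ``big'' and ``little'' self-overlaps (subwords of the form $wuw$ and $ww$).

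The one genuine flaw is in the concentration step. The process $\sum_{i\le t}\bigl(X_i-\Pr(X_i=1\mid v_{<i})\bigr)$ is \emph{not} a martingale with respect to the filtration $\mathcal{F}_t=\sigma(v[1\ldots t])$: the increment $X_i$ depends on the letters $v[i\ldots i+Lm-1]$, so it is $\mathcal{F}_{i+Lm-1}$-measurable but not $\mathcal{F}_i$-measurable, and there is no filtration with respect to which the partial sums are adapted while the compensators remain $\Pr(X_i=1\mid v_{<i})$ (any $\sigma$-algebra large enough to make $X_{i-1}$ measurable already reveals most of $v_i$). Azuma--Hoeffding therefore does not apply as written. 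The standard repair --- and the one the paper itself uses in Propositions~\ref{small_error} and~\ref{long_inverse_word_bound} --- is to split the indices into the $Lm$ residue classes modulo $Lm$: within the class $i\equiv j \pmod{Lm}$ the sum $\sum_t Y_{j+tLm}$ \emph{is} a martingale with respect to $\mathcal{G}_t:=\mathcal{F}_{j+(t+1)Lm-1}$, Azuma gives a tail bound of $e^{-s^2 Lm/2n}$ per class, and a union bound over the $O(\log n)$ classes costs nothing. With this fix your choice of exponents survives essentially unchanged and the proof goes through.
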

\begin{proof}
The proof is almost the same as that of Proposition~\ref{long_inverse_word_bound}, except
that we need to estimate the number of $\sigma$ for which some copy of $\sigma$ overlaps
itself, and show this is $<n^{1-\epsilon}$ for some $\epsilon$ with the desired probability.

There are two kinds of overlaps to consider: those for which the nonoverlapping initial
segment of the first word has length $<2m/3$ (``big overlaps'') and those for which
it has length $\ge 2m/3$ (``little overlaps''). We count the number of
each independently. 

A big overlap results in a subword of the form $wuw$ where the length of $w$ is at least $m/6$
and the length of $u$ is at least $m/6$. Conditioned on $w$ and $u$, the probability that
the next word will be a copy of $w$ is at most $n^{-1/6}$, so there are at most $n^{5/6}$
subwords of $v$ that are contained in a big overlap. 
A little overlap results in a subword of the form $ww$ where the length of $w$
is at least $2m/3$. Again, conditioned on $w$, the probability that the next word will be a copy
of $w$ is at most $n^{-2/3}$ so there are at most $n^{1/3}$ subwords of $v$
that are contained in little overlaps. Each subword is contained in at most
$Lm = O(\log(n))$ overlaps of either kind. The result follows.
\end{proof}

\section{Stable commutator length}\label{scl_background_section}

The material in this section is standard. A basic reference is \cite{Calegari_scl}.

\subsection{Definitions}

\begin{definition}
Let $G$ be a group, and $[G,G]$ the commutator subgroup. The {\em commutator length}
of an element $g\in [G,G]$, denoted $\cl(g)$, is the least number of commutators whose
product is $g$; and the {\em stable commutator length}, denoted $\scl(g)$, is the limit
$\scl(g):=\lim_{n\to\infty} \cl(g^n)/n$.
\end{definition}

The definition of (stable) commutator length can be extended to finite formal sums as follows:

\begin{definition}
Let $G$ be a group, and let $\lbrace g_i \rbrace$ be a finite collection of elements with
$\prod_i g_i \in [G,G]$. Define $\cl(\sum g_i)$ to be the minimum of $\cl(\prod g_i^{h_i})$
over all products of conjugates $g_i^{h_i}$ of the $g_i$. This is symmetric, and a class
function in each $g_i$ separately. Define $\scl(\sum g_i) = \lim_{n\to\infty} \cl(\sum g_i^n)/n$.
\end{definition}

Let $C_1(G)$ be the real vector space with basis the elements of $G$, and let $B_1(G)$ be
the kernel of $C_1(G) \to H_1(G;\R)$. So $B_1(G)$ is the space of formal finite real linear
combinations of elements in $G$ that represent $0$ in (real) homology. Equivalently, $B_1(G)$
is the image of the vector space of real $2$-chains (in the bar complex) under $\partial$.
It is a fact that $\scl$ extends by linearity and continuity to a pseudo-norm on $B_1(G)$,
and vanishes on the subspace $\langle g-hgh^{-1}, g^n - ng\rangle$. This vanishing reflects
the homogeneity of $\scl$ and the fact that it is a class function in each variable separately.
So $\scl$ descends to a pseudo-norm on the quotient $B_1^H(G):=B_1(G)/\langle g-hgh^{-1}, g^n-ng\rangle$.

The following theorem is nice to know, but is not used in an essential way in this paper:
\begin{theorem}[Calegari-Fujiwara \cite{Calegari_Fujiwara}]
Let $G$ be (word) hyperbolic. Then $\scl$ is a norm on $B_1^H(G)$.
\end{theorem}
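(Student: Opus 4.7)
The plan is to use Bavard duality to reduce the statement to an existence result for homogeneous quasi-morphisms. Bavard's theorem identifies the $\scl$ pseudo-norm on $B_1^H(G)$ with the dual of the defect semi-norm on $Q(G)/H^1(G;\R)$, where $Q(G)$ is the space of homogeneous quasi-morphisms on $G$; explicitly,
$$\scl(c) = \tfrac{1}{2}\sup_\phi \frac{|\phi(c)|}{D(\phi)}$$
with the supremum taken over homogeneous quasi-morphisms $\phi$ of nonzero defect $D(\phi)$. Hence $\scl$ is a norm on $B_1^H(G)$ precisely when $Q(G)/H^1(G;\R)$ separates points of $B_1^H(G)$.

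To produce enough quasi-morphisms on a hyperbolic $G$, I would invoke the Epstein--Fujiwara construction. Fix a Cayley graph $\Gamma$ of $G$; for each oriented geodesic segment $w$ in $\Gamma$, let $c_w(g)$ be the maximum number of disjoint copies of $w$ occurring along any geodesic from $1$ to $g$, set $h_w := c_w - c_{w^{-1}}$, and homogenize to obtain $\phi_w$. The key analytic input is that $\delta$-hyperbolicity bounds the defect $D(\phi_w)$ in terms of $\delta$ and the generating set alone, so every $\phi_w$ lies in $Q(G)$.

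Given a nonzero class $c = \sum_i t_i [g_i] \in B_1^H(G)$, I would use the relations $g \sim hgh^{-1}$ and $g^n \sim ng$ to normalize so that distinct indices correspond to distinct conjugacy classes of distinct primitive roots, with some $t_{i_0} \ne 0$. Geometrically the $g_i$ then have pairwise distinct unoriented quasi-axes in $\Gamma$ up to the action of $G$. I would then seek a long oriented sub-arc $w$ of the axis of $g_{i_0}$ such that neither $w$ nor $w^{-1}$ coarsely fellow-travels any $G$-translate of any axis $A_j$ (including $A_{i_0}$ itself) in a way that could generate unwanted contributions to $\phi_w$. Granting such a $w$, the resulting $\phi_w$ is strictly positive on $g_{i_0}$ but uniformly bounded on each $g_j$ with $j\ne i_0$; scaling by a sufficiently high power and using homogeneity of $\phi_w$ then forces $\phi_w(c) \ne 0$.

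The main obstacle is producing precisely such a $w$. This rests on a \emph{divergence of quasi-axes} property in hyperbolic groups: two loxodromic elements whose quasi-axes coarsely fellow-travel for arbitrarily long stretches must share a common power and hence lie in a common virtually cyclic subgroup. Making this divergence uniform and quantitative, and then combining it with a counting argument to rule out finitely many forbidden overlaps of $w$ with the other axes and of $w^{-1}$ with the axis of $g_{i_0}$, is the combinatorially delicate step. Once $w$ is in hand, Bavard duality immediately gives $\scl(c) > 0$, completing the proof.
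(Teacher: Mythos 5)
The paper does not prove this statement---it is quoted from \cite{Calegari_Fujiwara} and explicitly flagged as ``not used in an essential way''---so there is no internal argument to compare against; your outline (Bavard duality plus Epstein--Fujiwara counting quasimorphisms, choosing a separating segment $w$ on the axis of one summand so that the homogenized $\phi_w$ is positive there and vanishes on the remaining summands) is precisely the strategy of that reference. Be aware, however, that you have deferred the only genuinely hard step: the quantitative divergence of quasi-axes of independent loxodromic elements and the construction of a $w$ avoiding all forbidden overlaps (including of $w^{-1}$ with translates of the axis of $g_{i_0}$ itself), together with the normalization discarding torsion classes and identifying $g^{-1}$ with $-g$ in $B_1^H(G)$. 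As written this is an accurate roadmap of the cited proof rather than a complete one.
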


\subsection{Surfaces}

Let $X$ be a space with $\pi_1(X)=G$, and for any finite collection of conjugacy classes
$g_i$ let $\Gamma:\coprod_i S^1_i \to X$ be a $1$-manifold in the associated free homotopy
class. A map of a (compact, oriented) surface $f:S \to X$ is {\em admissible} if there is
a commutative diagram
$$\begin{CD}
\partial S @>>> S \\
@V\partial f VV @Vf VV \\
\coprod_i S^1_i @>\Gamma >> X
\end{CD} $$
and an integer $n(S)$ for which $\partial f_*[\partial S] = n(S)[\coprod_i S^1_i]$
in $H_1$. The map is {\em monotone} if $\partial S \to \coprod_i S^1_i$ is homotopic to
an orientation-preserving cover (equivalently, if every component of $\partial S$ wraps with
positive degree around its image).

\begin{lemma}[\cite{Calegari_scl}, Prop.~2.74]\label{surface_lemma}
Let $g_1,\cdots,g_m$ be conjugacy classes in $G$, represented by $\Gamma:\coprod_i S^1_i \to X$.
Then
$$\scl(\sum_i g_i) = \inf_S \frac {-\chi^-(S)} {2n(S)}$$
where the infimum is taken over all surfaces $S$ and all maps $f:S \to X$ admissible for
$\Gamma$.
\end{lemma}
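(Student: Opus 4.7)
The plan is to establish a two-way dictionary between admissible surfaces and algebraic expressions of conjugates of the $g_i$ as products of commutators, then take infima on both sides. I would proceed in three main steps, after first passing to monotone admissible surfaces. Given an admissible $f : S \to X$, one may discard closed components (they make $-\chi^-$ only larger, contributing nothing useful), and resolve any nonmonotone boundary loops by surgery along compressing arcs: an oriented boundary arc of $S$ that maps to $S^1_i$ with cancelling positive and negative segments can be surgered, which does not increase $-\chi^-$ and preserves admissibility. Disk components can be absorbed into a modification of $n(S)$. After these reductions I assume $S$ is connected, admissible, and monotone, so each boundary circle of $S$ wraps positively some number $d_j$ of times around a single $S^1_{\tau(j)}$, with $\sum_{j:\tau(j)=i} d_j = n(S)$ for every $i$.

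Next I would use the standard presentation of a connected oriented genus $k$ surface with $b$ boundary components, namely $\pi_1(S) = \langle a_i, b_i, c_j \mid \prod_{i=1}^k [a_i,b_i]\prod_{j=1}^b c_j = 1\rangle$, and push forward by $f_*$ to obtain the identity
$$\prod_{i=1}^k [f_*(a_i), f_*(b_i)] \cdot \prod_{j=1}^b g_{\tau(j)}^{h_j d_j} = 1$$
in $G$. By tubing pairs of boundary components sharing the same value $\tau(j) = i$, an operation that preserves $-\chi(S)$ while decreasing $b$ by one and combining their degrees, I reduce to the case $b = m$ with one component per conjugacy class, each wrapping with total degree $n(S)$. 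The relation above then exhibits a suitable rearrangement of $(\prod_i g_i^{h_i})^{n(S)}$ as a product of $k = (-\chi(S) + 2 - m)/2$ commutators, giving $\cl(\sum g_i^{n(S)}) \le k + O(m)$ (the $O(m)$ coming from reordering conjugate chunks into the standard order). Dividing by $n(S)$ and taking $n(S) \to \infty$ yields $\scl(\sum g_i) \le \inf_S -\chi^-(S)/2n(S)$.

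Conversely, starting from a near-optimal commutator expression $(\prod_i g_i^{h_i})^n = \prod_{j=1}^k [x_j, y_j]$ with $k$ within $1$ of $\cl(\sum g_i^n)$, I would construct the standard model surface of genus $k$ with one boundary component, map it to $X$ sending the boundary monotonically around $(\prod_i g_i^{h_i})^n$ in accordance with the relation, and subdivide the boundary into arcs labelled by the $g_i$ so that each $S^1_i$ is covered with total degree $n$. This produces an admissible monotone $f:S\to X$ with $-\chi(S) = 2k-1$ and $n(S) = n$, so $\inf_S -\chi^-(S)/2n(S) \le (2\cl(\sum g_i^n)-1)/2n$, which in the limit gives the reverse inequality.

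The main obstacle is the combinatorial bookkeeping in the tubing reduction: one must verify that tubes can be chosen consistently with orientations (so that positively-wrapping boundaries combine to a positively-wrapping boundary of the summed degree), and that the resulting algebraic identity is genuinely of the form $(\prod g_i^{h_i})^{n(S)}$ up to a number of extra commutators bounded independently of $n(S)$, so that the $O(m)$ error term is negligible after dividing by $n(S)$ and passing to the limit. A secondary subtlety is showing that nonmonotone boundary surgeries and disk removal never increase $-\chi^-/2n(S)$, so that restricting to the monotone case does not affect the infimum.
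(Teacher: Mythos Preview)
The paper does not prove this lemma; it simply quotes it from \cite{Calegari_scl}, Proposition~2.74, so there is no in-paper argument to compare your attempt against.

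Your outline is the standard one, but the upper-bound half contains a real error. You assert that tubing two boundary components over the same $S^1_i$ ``preserves $-\chi(S)$ while decreasing $b$ by one.'' Attaching a $1$-handle joining two boundary circles decreases $\chi$ by $1$, so $-\chi$ \emph{increases}; after $b-m$ tubes the surface $S'$ has $-\chi(S') = -\chi(S) + (b-m)$, and the genus you extract is $(-\chi(S)+b-2m+2)/2$, not $(-\chi(S)+2-m)/2$. The resulting bound on $\cl(\sum g_i^{n(S)})$ is therefore off from $-\chi(S)/2$ by roughly $b/2$, and since a monotone surface can have $b$ of order $m\cdot n(S)$, the discrepancy $b/2n(S)$ does not vanish in the limit. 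So the argument as written does not establish $\scl(\sum g_i)\le -\chi^-(S)/2n(S)$ for the original $S$, only for the tubed $S'$.

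The standard repair replaces tubing by finite \emph{covers}: for $n$ divisible by $b$ there is a degree-$n$ connected cover $\widetilde S\to S$ in which every boundary circle of $S$ has a single connected preimage wrapping $n$ times (take the cover associated to a homomorphism $H_1(S)\to\Z/n$ sending each boundary class to $1$; this is consistent because the boundary classes sum to zero in $H_1$). Then $\widetilde S$ still has $b$ boundary components, now representing $w_j^n$, and genus $(n(-\chi(S))+2-b)/2$, so $\cl(\sum_j w_j^n)\le (n(-\chi(S))+2-b)/2$; dividing by $n$ and letting $n\to\infty$ gives $\scl(\sum_j w_j)\le -\chi(S)/2$, and the $B_1^H$ identity $\sum_j g_{\tau(j)}^{d_j}=n(S)\sum_i g_i$ finishes. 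Your converse direction also needs a small adjustment: admissibility requires each boundary \emph{component} of $S$ to map to a single $S^1_i$, so the model surface built from a commutator identity should have $m$ boundary circles (one per $g_i$, each wrapping $n$ times) rather than a single boundary that you then ``subdivide into arcs.''
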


The notation $\chi^-(S)$ means the sum of Euler characteristics $\sum_i\chi(S_i)$ taken over those 
components $S_i$ of $S$ with $\chi(S_i)\le 0$. By \cite{Calegari_scl}, Prop.~2.13 it suffices to
restrict to {\em monotone} admissible surfaces. An admissible surface $S$ is 
{\em extremal} if equality is achieved.

\subsection{Fatgraphs}\label{fatgraph_subsection}

If $F$ is free, $X$ can be taken to be a graph, and any admissible surface can be
represented combinatorially (possibly after performing some compressions) 
by a {\em fatgraph}. Fatgraphs are combinatorial objects which allow one to move
back and forth between group theory/combinatorics and 2-dimensional topology; a standard
reference is \cite{Penner}, especially \S~1.

A {\em fatgraph} $Y$ is a graph together with a cyclic ordering of the edges 
incident at each vertex.
Such a graph can be thickened to a compact surface $S(Y)$ (or just $S$ if $Y$ 
is understood) in such a way that $Y$ embeds in $S(Y)$ as a
deformation retract. A fatgraph $Y$ is {\em oriented} if $S(Y)$ is oriented. In the sequel we
assume all our fatgraphs are oriented, and have no $1$-valent vertices. 
Note that $\chi(Y)=\chi(S(Y))$.

A {\em fatgraph over $F$} is a fatgraph with oriented edges labeled by words in $F$
so that opposite sides get inverse labels, and the cyclic words obtained by reading 
around $\partial S(Y)$ are reduced. By abuse of notation we write $\partial Y$ in place
of $\partial S(Y)$ and think of it as an element of $B_1^H(F)$. Figure~\ref{fatgraph_figure} 
gives an example of an extremal fatgraph for the chain $a+b+AB+[a,b]$ in $F_2$. Note that
extremal surfaces do not need to be connected.

\begin{figure}[htpb]
\labellist
\small\hair 2pt
\pinlabel $a$ at 188 472
\pinlabel $A$ at 160 454
\pinlabel $B$ at 132 439
\pinlabel $b$ at 112 416
\pinlabel $B$ at 220 400
\pinlabel $b$ at 220 365
\pinlabel $a$ at 345 400
\pinlabel $A$ at 375 420
\pinlabel $A$ at 155 360
\pinlabel $a$ at 178 342
\pinlabel $b$ at 85 250
\pinlabel $B$ at 70 220
\pinlabel $B$ at 221 260
\pinlabel $b$ at 250 257
\pinlabel $b$ at 280 250
\pinlabel $B$ at 308 248
\pinlabel $A$ at 270 95
\pinlabel $a$ at 295 110 
\pinlabel $a$ at 315 130
\pinlabel $A$ at 338 150
\pinlabel $a$ at 380 290
\pinlabel $A$ at 400 315
\pinlabel $b$ at 419 146
\pinlabel $B$ at 442 125
\endlabellist
\centering
\includegraphics[scale=0.5]{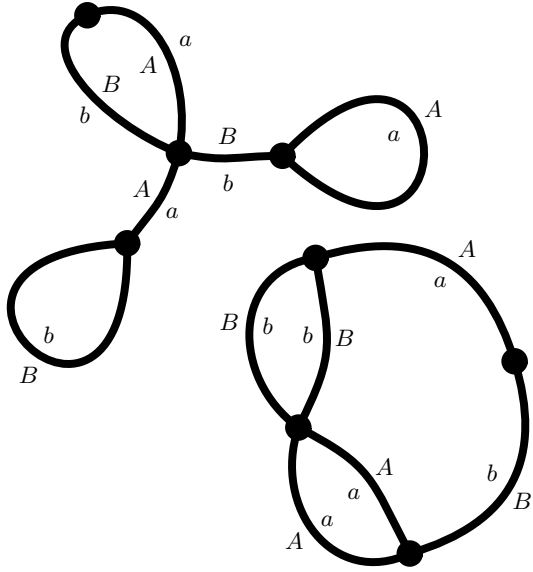}
\caption{An extremal surface for $a+b+AB+[a,b]$ represented as a fatgraph.}\label{fatgraph_figure}
\end{figure}

The basic fact we use is the following lemma, which is a restatement of \cite{Culler}, Thm.~1.4 in
the language of fatgraphs. 

\begin{lemma}[Culler \cite{Culler}, Thm.~1.4 (fatgraph lemma)]\label{fatgraph_lemma}
Let $S$ be an admissible surface bounding a chain $\Gamma$. Then after possibly compressing
$S$ a finite number of times (thereby reducing $-\chi^-(S)$ without changing $\partial S$)
there is a fatgraph $Y$ over $F$ with $S(Y)=S$ and $\partial Y=\Gamma$.
\end{lemma}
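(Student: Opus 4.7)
Since $F$ is free of rank $k$, take $X$ to be a $k$-petaled rose so that edges correspond to generators. The plan is to extract $Y$ from the combinatorial structure of $\Lambda := f^{-1}(\{\text{midpoints of the edges of }X\})$, which, after a generic homotopy of $f$ relative to $\partial S$, is a properly embedded $1$-submanifold of $S$ consisting of arcs with endpoints on $\partial S$ together with simple closed curves in the interior of $S$.

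The first order of business is to eliminate all circles from $\Lambda$. Any circle $\gamma\subset\Lambda$ maps to a single midpoint, so $f_*[\gamma]=1\in\pi_1(X)$. If $\gamma$ is inessential in $S$, one isotopes $f$ off the midpoint to remove $\gamma$ from $\Lambda$. If $\gamma$ is essential, compress $S$ along $\gamma$ by cutting $S$ open along $\gamma$ and capping both sides with disks mapping into the half-edge of $X$ adjacent to the midpoint. This produces a new admissible surface $S'$ with $\partial S'=\partial S$ and $-\chi^-(S')\le -\chi^-(S)-2$ (after discarding any resulting disk or sphere components, which have $\chi>0$ and do not affect $\chi^-$ or $\partial S$). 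Since $-\chi^-$ is a non-negative integer, iterating terminates.

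Once $\Lambda$ consists only of arcs, each component $R$ of $S\setminus\Lambda$ maps into a contractible star neighborhood of the vertex of $X$. If some $R$ is not a disk, there is either an essential simple closed curve in $R$ (if $R$ has positive genus) or an essential properly embedded arc in $R$ with endpoints on $\Lambda$ (if $R$ is planar but has more than one $\Lambda$-boundary component); compressing or boundary-compressing $S$ along such a curve again strictly reduces $-\chi^-(S)$ without changing $\partial S$. After finitely many such steps every component of $S\setminus\Lambda$ is a disk whose boundary alternates between arcs of $\Lambda$ and arcs of $\partial S$.

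Now define $Y$ by taking its vertices to be the disk components of $S\setminus\Lambda$ and its edges to be the arcs of $\Lambda$; the orientation of $S$ induces a cyclic order on the edges meeting each vertex, making $Y$ an oriented fatgraph. Labeling each edge by the generator of $F$ corresponding to the edge of $X$ whose midpoint the arc preimages gives a fatgraph over $F$. Because every complementary region is a disk, the regular neighborhood $S(Y)$ is canonically identified with $S$, and reading around each boundary component of $S=S(Y)$ recovers $\Gamma$ letter by letter, proving $\partial Y=\Gamma$. The main delicacy is the second round of compressions: one must verify that after the circles have been cleared there remain enough compressing and boundary-compressing curves to drive every complementary region down to a disk, and that doing so does not destroy the reducedness of the boundary labels. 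The former is guaranteed by the topological characterization of the disk among compact surfaces with boundary, and the latter is automatic because compressions take place in the interior of $S$ and leave $\partial S$ pointwise fixed.
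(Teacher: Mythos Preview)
The paper does not prove this lemma; it cites Culler \cite{Culler}, Thm.~1.4, and in the subsequent remark points to \cite{Calegari_rational}, Lem.~3.4 and \cite{Calegari_scl}, \S4.3 for the case of disconnected boundary. Your argument is exactly the standard transversality proof that appears in those references, so there is nothing of substance to compare.

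Your write-up is correct in outline, but one step is imprecise. In the second round of simplifications you propose to ``boundary-compress $S$'' along an arc in $R$ with endpoints on $\Lambda$. This is not a boundary compression of $S$: the endpoints lie in the interior of $S$, not on $\partial S$, so the operation you name does not make sense as stated. Fortunately the arc is unnecessary. Any compact orientable surface with nonempty boundary that is not a disk contains an essential simple closed curve $\gamma$; take it in the interior of $R$. If $\gamma$ bounded a disk $D$ in $S$, then $D$ would lie in the interior of $S$ and (since $\Lambda$ now consists only of properly embedded arcs with endpoints on $\partial S$, and $\partial D=\gamma$ is disjoint from $\Lambda$) would be disjoint from $\Lambda$, hence contained in $R$ --- contradicting essentiality of $\gamma$ in $R$. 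So $\gamma$ is essential in $S$, $f_*[\gamma]=1$ since $R$ maps to the contractible star, and you compress honestly along $\gamma$. No separate ``planar'' case or arc argument is needed.
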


\begin{remark}
Culler proves his theorem only for surfaces with
connected boundary, but his argument generalizes with no extra work. 
An equivalent statement, valid for surfaces with disconnected boundary,
is also proved in \cite{Calegari_rational}, Lem.~3.4; also
see \cite{Calegari_scl} \S~4.3 for a discussion and references.
\end{remark}

Let $Y$ be an extremal fatgraph for $v$. The underlying fatgraph might not be trivalent, but by
splitting higher valence vertices, and inserting (unlabeled) ``dummy edges'', we can think of
$Y$ as a trivalent fatgraph in a degenerate way, where some degenerate ``edges'' have length $0$.
We call this the operation of {\em resolving vertices} (such a resolution need not be unique).

\begin{lemma}\label{combinatorial_estimate}
Let $Y$ be an extremal fatgraph for $v$, so that $\partial Y$ represents $Nv$ for some $N$,
and $-\chi(Y)/2N = \scl(v)$. Resolve vertices of $Y$ so that $Y$ is trivalent,
possibly with some edges of length $0$. Let the average length of the edges of $Y$ be
$\ell m$. Then
$$\scl(v) = n\log(2k-1)/12\ell\log(n)$$
\end{lemma}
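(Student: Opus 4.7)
The plan is a pure Euler-characteristic/edge-length bookkeeping argument, given that $Y$ has been resolved to be trivalent (perhaps with some length-zero ``dummy'' edges). There are essentially three identities to assemble.

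First, I would exploit trivalence: if $V$ and $E$ denote the numbers of vertices and edges of the resolved fatgraph, then $3V = 2E$, so $V = 2E/3$ and
$$-\chi(Y) \;=\; E - V \;=\; E/3.$$
This gives one side of the $\scl$ formula.

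Next, I would compute the total length of $\partial Y$ in two different ways. Each edge of a fatgraph appears exactly twice on the boundary (once along each of its two sides), and this is why in the fatgraph representation of Lemma~\ref{fatgraph_lemma} the boundary labels read off reduced cyclic words built from the edge labels. Therefore, if $E_{\text{pos}}$ denotes the set of edges and each edge $e$ has length $|e|$, then
$$\text{length}(\partial Y) \;=\; 2\sum_{e} |e| \;=\; 2E \cdot \ell m$$
by the definition of the average edge length. On the other hand, $\partial Y$ represents $Nv$, so its total length equals $Nn$. Equating these two expressions gives
$$N \;=\; 2E\ell m / n.$$

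Finally, substituting into the defining equality $\scl(v) = -\chi(Y)/(2N)$ yields
$$\scl(v) \;=\; \frac{E/3}{2\cdot 2E\ell m/n} \;=\; \frac{n}{12\ell m} \;=\; \frac{n\log(2k-1)}{12\ell\log(n)},$$
using $m = \log(n)/\log(2k-1)$.

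The only point requiring any care is that the ``average length'' in the statement must be interpreted as the mean taken over \emph{all} edges of the trivalent resolution, including the dummy edges of length zero introduced when splitting higher-valence vertices; with this convention both sides of the boundary-length identity $2\sum_e|e| = 2E\ell m$ are literally correct, and the rest is arithmetic. No probabilistic input is needed here --- this lemma is a deterministic combinatorial identity --- so the hard part is really only notational rather than mathematical.
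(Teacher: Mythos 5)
Your proof is correct and follows exactly the paper's argument: trivalence gives $-\chi(Y)=E/3$, counting the boundary length two ways gives $Nn=2E\ell m$, and substitution into $\scl(v)=-\chi(Y)/2N$ yields the formula. Your closing remark about including the length-zero dummy edges in the average matches the convention the paper adopts (and addresses in a remark immediately after the lemma).
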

\begin{proof}
Suppose $Y$ has $V$ vertices and $E$ edges. Since $Y$
is trivalent, $2E/3=V$ and $-\chi(Y)=E-V = E/3$. On the other hand, the total length
of $\partial Y$ is $Nn = 2E\ell m$. Hence 
$$\scl(v) = -\chi(Y)/2N = E/6N = n/12\ell m = n\log(2k-1)/12\ell\log(n)$$
\end{proof}
It will be our goal to show that for {\em random} $v$ of length $n\gg 1$, 
the extremal fatgraph $Y$ has $\ell = 1/2 + o(1)$ with probability $1-o(1)$.

\begin{remark}
The reader who is unhappy with edges of length $0$ can just take $\ell m$ to be equal to the
total length of $Y$ divided by $E+\sum_v \text{valence}(v)-3$.
\end{remark}

\section{Random values of scl}\label{random_value_section}

The goal of this section is to prove the Random Rigidity Theorem:

\begin{theorem}[Random Rigidity Theorem]\label{random_rigidity_theorem}
Let $F$ be a free group of rank $k$, and let $v$ be a random reduced element of length $n$,
conditioned to lie in the commutator subgroup $[F,F]$. Then for any $\epsilon>0$ and
$C>1$,
$$|\scl(v)\log(n)/n - \log(2k-1)/6| \le \epsilon$$
with probability $1-O(n^{-C})$.
\end{theorem}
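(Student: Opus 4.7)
By Lemma~\ref{combinatorial_estimate}, the theorem reduces to showing that any extremal trivalent fatgraph $Y$ for $v$, bounding a degree-$N$ cover of the loop $v$, has average edge length $\ell m = (m/2)(1+o(1))$ with probability $1-O(n^{-C})$. I would prove this by exhibiting matching upper and lower bounds on $\scl(v)$, which via the lemma translate to the desired two-sided estimate on $\ell$.

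For the \emph{upper bound} on $\scl(v)$, I would construct an explicit admissible fatgraph. Fix $L$ slightly less than $1/2$. By Proposition~\ref{small_error}, with probability $1-O(C^{-n^c})$, every subword $\sigma$ of length $Lm$ in $v$ and every inverse $\sigma^{-1}$ appears $n^{1-L}(1 \pm o(1))$ times. For a suitably chosen $N$, partition $N$ cyclic copies of $v$ into $2E = Nn/(Lm)$ consecutive arcs of length $Lm$. A Hall-type matching --- possible because subword and inverse-subword counts are nearly balanced (imbalance $o$ of the total number of arcs) --- pairs each arc with an occurrence of an arc carrying the inverse label. Grouping the resulting seams into trivalent vertices yields an admissible trivalent fatgraph $Y$ with $E = Nn/(2Lm)$ edges, so $\scl(v) \leq E/(6N) = n/(12Lm)$. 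Letting $L \nearrow 1/2$ gives $\scl(v)\log(n)/n \leq \log(2k-1)/6 + \epsilon$.

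For the \emph{lower bound}, given any admissible trivalent fatgraph $Y$ with boundary cover degree $N$, we want $\ell \leq 1/2 + o(1)$, i.e., $E \geq (Nn/m)(1-o(1))$. For $L_* = 1+\delta$ with $\delta>0$ small, Proposition~\ref{long_inverse_word_bound} (together with Proposition~\ref{large_cardinality}) shows that the set $B \subset v^N$ of positions whose length-$L_* m$ subword has its inverse appearing elsewhere in $v^N$ has cardinality $O(Nn^{1-\delta+\epsilon})$. Any edge of $Y$ of length $\geq L_* m$ has both of its sides starting in $B$, so the total boundary length occupied by such ``long'' edges is $o(Nn)$; it follows that $\ell \leq L_*(1+o(1)) = 1+o(1)$.

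The main obstacle is sharpening this to $\ell \leq 1/2 + o(1)$, a factor of $2$ beyond what the inverse-subword counting of Section~\ref{random_word_section} directly provides, and this is where I expect the real work to lie. The argument must exploit the trivalent vertex structure of $Y$: at each vertex three edge-sides meet in a specific cyclic order, imposing compatibility constraints on how edges of length in the regime $(m/2,\, m)$ can coexist in a fatgraph with boundary $Nv$. I expect the sharp bound follows from an LP-duality argument in the spirit of Bavard duality: for random $v$, the dual measure certifying the lower bound on $\scl$ should concentrate at the characteristic scale $m/2$, where both subword and inverse-subword counts ($\sim n^{1/2}$ per subword) are abundant enough to sustain a matching but the trivalent rotation data forces the extremal $\ell$ down to $1/2$. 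Alternatively, an extremality argument showing that any trivalent fatgraph with $\ell > 1/2 + \epsilon$ admits a local simplification reducing $E$ while preserving the boundary class would suffice.
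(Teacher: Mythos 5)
Your upper bound is essentially the paper's argument: glue tripods with edge length just under $m/2$, using Proposition~\ref{small_error} to match arcs against arcs carrying inverse labels. You gloss over one real issue --- the matching is only approximate, and the $O(n^{-\epsilon/3})$ fraction of unmatched boundary must still be capped off by an admissible piece of controlled complexity, which the paper does via a rectangle decomposition that uses the hypothesis $v\in[F,F]$ --- but this is repairable and the approach is sound.

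The lower bound has a genuine gap, and you have located it yourself: counting inverse subwords only forces $\ell\le 1+o(1)$, and neither of your proposed routes to $\ell\le 1/2+o(1)$ works as stated. The Bavard-duality route is a known dead end at exactly this factor of two: the paper's \S~\ref{quasimorphism_section} carries out the natural quasimorphism construction and obtains only $\scl(v)\ge n\log(2k-1)/12\log(n)$, i.e.\ half the truth, and no certifying quasimorphism achieving the sharp constant is known. The missing idea is the paper's notion of a \emph{comb}: a path of $2d+1$ consecutive edges read along the boundary of the trivalent fatgraph $Y$, consisting of a subword $b$ of $v$ together with the $d$ edges $c_1,\dots,c_d$ attached along it. Every edge of $Y$ lies in exactly $4d+2$ combs of complexity $d$, so if $\ell\ge 1/2+\epsilon$ then a definite fraction of combs have normalized length $L/(2d+1)>1/2+\epsilon$. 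The key estimate (Lemma~\ref{comb_estimate}) is that for a fixed $\delta$-regular $b$ and a fixed vector of edge lengths, the probability that $v$ contains such a comb is $O(n^{-T+O(\delta)})$ with $T=L-d-1$: each attached edge contributes one free choice of location (a factor of $n$) but its two sides $c_i$ and $C_i$ must then match at prescribed relative positions, costing roughly $n^{-(L_i+K_i)}$, with an overlap analysis controlling the correlations between the $d$ constraints. When $L/(2d+1)>1/2+\epsilon$ one gets $T>(2d+1)\epsilon-1/2$, so taking $d\sim 1/\epsilon$ makes $T$ arbitrarily large after a union bound over the at most $n\cdot n^{O(\delta)}$ choices of $b$ and of length vector. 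This averaging over long boundary paths --- not vertex-by-vertex rotation constraints or LP duality --- is what upgrades the scale-$m$ inverse-subword information to the sharp scale-$m/2$ bound, and it is also why the lower bound holds only with probability $1-O(n^{-C})$ rather than exponentially.
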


The proof will occupy most of the remainder of the section.

\subsection{Upper bounds}

The upper bound in the Random Rigidity Theorem is sharpened by the following
proposition:

\begin{proposition}\label{upper_bound_proposition}
Let $v$ be a random reduced word in the commutator subgroup of length $n$.
Then for any $\epsilon > 0$ there are constants $C>1$ and $c>0$ so that
$$\scl(v)\log(n)/n - \log(2k-1)/6 \le \epsilon$$
with probability $1-O(C^{-n^c})$.
\end{proposition}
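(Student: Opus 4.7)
The plan is to construct, with probability $1-O(C^{-n^c})$, an admissible fatgraph $Y$ bounding $v$ whose trivalent resolution has average edge length $\ell m$ with $\ell \geq 1/2 - o(1)$. By Lemma~\ref{combinatorial_estimate} this immediately yields $\scl(v) \leq n\log(2k-1)/(12\ell\log(n))$, which is within $\epsilon \cdot n/\log(n)$ of $n\log(2k-1)/(6\log(n))$ once $o(1) < 3\epsilon/\log(2k-1)$ say. Choose a parameter $L = 1/2 + \delta$, where $\delta > 0$ will be taken small compared to $\epsilon$.

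First I would partition the cyclic word $v$ into $N := \lfloor n/(Lm)\rfloor$ consecutive arcs $u_1,\ldots,u_N$ of length $\lfloor Lm\rfloor$ or $\lceil Lm\rceil$. An analogue of Proposition~\ref{small_error} at the fixed residue class modulo $\lfloor Lm \rfloor$ (proved by the same Markov-chain Chernoff argument: decorrelate arcs separated by a window of length $Nm$, then apply a union bound) says that, with probability $1-O(C^{-n^c})$, each word $\sigma \in F_{Lm}$ appears among the $u_i$ with multiplicity $(N/|F_{Lm}|)(1 + O(n^{-\delta/2}))$. In particular the multiset $\{u_i\}$ is almost balanced under inversion: summing over $\sigma$ one has $\sum_\sigma |\#\{u_i=\sigma\} - \#\{u_i = \sigma^{-1}\}| = o(N)$, so an inverse-matching involution $\pi$ can be chosen on a subset of indices of size $N-o(N)$ with $u_{\pi(i)} = u_i^{-1}$. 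The $o(N)$ leftover arcs are dealt with by pairing their individual letters, which contributes only $o(n/\log n)$ edges and therefore does not affect the asymptotic average edge length.

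Next I would assemble a fatgraph $Y$ over $F$ whose edges are the matched pairs, labeled by the $u_i$: the cyclic arrangement of arcs on $\partial v$ specifies the adjacency of corners, and hence the vertex structure of $Y$, and guarantees $\partial Y = v$. Resolving higher-valence vertices of $Y$ by inserting dummy length-zero edges produces the trivalent fatgraph to which Lemma~\ref{combinatorial_estimate} applies; its number of edges is $-3\chi(Y) = 3(E-V)$, where $E = (N-o(N))/2$ is the number of labeled edges and $V$ is the vertex count. Since the labeled edge length totals $n/2 - o(n)$, the trivalent average edge length is $(n/2-o(n))/(3(E-V))$, which is at least $Lm$ precisely when $V \geq E - N/3 = N/6$.

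The main obstacle, which will be the crux of the argument, is to control the vertex count $V$, equivalently the number of orbits of the corner identifications induced by $\pi$. A careless pairing produces a single huge-valence vertex and a bound weaker than the target by a factor of $3/2$; one must therefore exhibit a pairing whose associated polygonal identification has $\Theta(N)$ corner-orbits. I expect the right strategy is either to choose $\pi$ generically enough that a random-matching heuristic can be rigorously justified using the concentration from the first step, or (more flexibly) to pass to $v^K$ for a large fixed $K$ and build the pairing on $NK$ arcs so that the identifications decompose into many almost-independent local blocks each contributing its own vertex. Once this combinatorial input is in place, Lemma~\ref{combinatorial_estimate} gives $\scl(v) \leq n\log(2k-1)/(12 L\log(n))$, which is at most $n\log(2k-1)/(6\log(n)) + \epsilon n/\log(n)$ for sufficiently small $\delta$, completing the proof.
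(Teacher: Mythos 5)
Your reduction to Lemma~\ref{combinatorial_estimate} is the right target, and the equidistribution input (concentration of the multiplicities of the arcs $u_i$ over $F_{Lm}$, hence near-invariance of the multiset under inversion) is sound and provable by the same Chernoff argument as Proposition~\ref{small_error}. But the step you yourself flag as ``the crux'' --- producing a matching $\pi$ whose corner identifications have $\Theta(N)$ orbits --- is a genuine gap, and neither of your proposed fixes closes it. Note first how strong the requirement is: since every vertex of valence $2$ can be erased and every remaining vertex has valence at least $3$, one always has $V\le 2E/3$, and your count shows you need $V\ge (2/3)(1-O(\delta))E$; that is, the fatgraph must be trivalent up to an $O(\delta)$ fraction of its vertices. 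A ``generic'' choice of $\pi$ does the opposite of what you want: a random pairing of the sides of an $N$-gon produces a surface with only $O(\log N)$ vertices (this is the Harer--Zagier phenomenon), so the random-matching heuristic gives $-\chi\approx E$ and a bound off by a constant factor, not the truth. Imposing trivalence on top of the constraint $u_{\pi(i)}=u_i^{-1}$ amounts to a system of $3$-cycle conditions of the form $\pi(\pi(\pi(i+1)+1)+1)=i$, and there is no reason a degree-one decomposition of $v$ into consecutive arcs admits such a solution; passing to $v^K$ for fixed $K$ does not obviously manufacture one either.

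The paper circumvents this difficulty by reversing the roles of edges and vertices: instead of first cutting $v$ into edge-labels and then hoping the vertices come out trivalent, it takes as atomic pieces \emph{all} tripods $T(x,y,z)$ (triples of subwords $xY,yZ,zX$ with $|x|=|y|=|z|=(1/2-\epsilon)m$), so trivalence is built in, and then glues tripods in \emph{pairs} along joints. The price is that one uses every copy of every tripod in $v$ simultaneously, so the resulting surface has boundary $Nv$ for a very large $N$ rather than $v$ itself --- harmless for $\scl$ --- and the equidistribution estimate (Lemma~\ref{tripod_boundary_small}) is exactly what guarantees that the uniform measure on tripods has nearly $\iota$-invariant boundary, so all but an $O(n^{-\epsilon/3})$ fraction of the tripod mass glues up; the remainder is absorbed by a rectangle decomposition using $v\in[F,F]$, contributing $o$ to the average edge length. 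If you want to complete your argument, you will essentially be forced into this high-degree, vertex-first construction (or something equivalent to it), so I would recommend restructuring the proof around tripods rather than around a partition of $v$.
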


Given random $v$, we explicitly build an extremal surface 
(actually an extremal fatgraph) by gluing together a very large
number of {\em tripods} with edges of length slightly less than $(1-\epsilon)m/2$. 
The fact that such tripods can be glued up to produce a fatgraph with
boundary very close to a multiple of $v$ follows from an equidistribution lemma, derived from
the estimates in \S~\ref{random_word_section}, which holds with very high probability for
most random $v$. The tripods do not glue up completely, but the mass of the
unglued part has size $O(n^{-\epsilon/2})$ compared to the glued part, and the
remainder can be glued up (under the hypothesis that $v$ is homologically trivial)
with a contribution to $\chi$ proportional to the mass. 

\subsection{Tripods and joints}

In what follows we generally adhere to the notational convention that group inverses are
denoted by small and capital letters; hence $X$ means $x^{-1}$ and so on.

\begin{definition}
A {\em tripod of edge length $L$} is a fatgraph with underlying graph a tripod, and
with edges labeled by reduced words $xY$, $yZ$, $zX$ where each of $x$, $y$, $z$  
(the {\em incoming edge labels}) has length $L$. We denote such a tripod $T(x,y,z)$

A {\em copy} of $T(x,y,z)$ is a triple of segments
of the form $xY$, $yZ$, $zX$ in $v$. These segments may appear anywhere in $v$; they might or
might not be adjacent, and are allowed to overlap each other.
\end{definition}

\begin{lemma}
A triple $x,y,z$ of reduced words of length $L$
are the labels of a tripod if and only if their last letters are distinct.
Consequently, for any reduced word $xY$ of length $2L$, there are $(2k-2)(2k-1)^{L-1}$ choices
for $z$.
\end{lemma}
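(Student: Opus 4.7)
The plan is to observe that the only nontrivial constraint built into the definition of $T(x,y,z)$ is that each of the three edge labels $xY$, $yZ$, $zX$ must be a reduced word (the underlying Y-graph and its cyclic ordering are automatic data). So the task reduces to translating reducedness of these three concatenations into a condition on the last letters of $x$, $y$, $z$.

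For the first direction I use the identity $xY = x\cdot y^{-1}$. Since $x$ and $y^{-1}$ are themselves reduced, $x\cdot y^{-1}$ is reduced if and only if the junction does not cancel, i.e., the last letter of $x$ is not the inverse of the first letter of $y^{-1}$. Because the first letter of $y^{-1}$ is the inverse of the last letter of $y$, this is precisely the statement that the last letters of $x$ and $y$ differ. Applying the same argument to $yZ$ and $zX$, the three edges are simultaneously reduced if and only if the last letters of $x$, $y$, $z$ are pairwise distinct.

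For the counting statement, fix a reduced word $xY$ of length $2L$; by what we just proved the last letters $x_L$ and $y_L$ are two distinct generators. I construct $z$ of length $L$ from right to left: the last letter $z_L$ must avoid both $x_L$ and $y_L$, giving $(2k-2)$ choices, while each preceding letter $z_{L-1},\ldots,z_1$ must only avoid the inverse of the letter immediately to its right (to keep $z$ reduced), giving $(2k-1)$ choices each. Multiplying yields $(2k-2)(2k-1)^{L-1}$. There is no real obstacle here: once one sees that both halves of the lemma reduce to the standard rule for when a concatenation of two reduced words is reduced, the proof is a direct verification.
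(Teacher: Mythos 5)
Your proof is correct, and it fills in exactly the verification the paper leaves implicit (the paper's proof is simply ``Obvious''): reducedness of each concatenation $xY$, $yZ$, $zX$ is equivalent to the corresponding pair of last letters being distinct, and the right-to-left count of admissible $z$ gives $(2k-2)(2k-1)^{L-1}$. No issues.
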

\begin{proof}
Obvious.
\end{proof}

There are $(2k)(2k-1)(2k-2)(2k-1)^{3(L-1)}/3 \sim (2k-1)^{3L}/3$ tripods $T$ of edge length $L$. 
For each tripod $T$, let $\partial T$ denote the triple of words $xY, yZ, zX$. 

\begin{definition}
A {\em joint of edge length $L$} is a fatgraph with underlying graph a segment, 
and with edges labeled by reduced words $x$, $X$ each of length $L$. Denote such a joint
$J(x)$.

A {\em copy} of $J(x)$ is an {\em ordered} pair of segments of the form $x$, $X$ in $v$.
Again, these segments may appear anywhere in $v$ (note that since $v$ is reduced, these
segments cannot overlap or be adjacent in $v$).
We distinguish between orientations, so that $J(x)$ and $J(X)$ are different. 
\end{definition}

Each joint $J(x)$ is contained in a unique maximal joint $J(x')$.

Fix $L$ with $L/m=1/2-\epsilon$ for some small $\epsilon$.
For a word $v$, let $T_L(v)$ denote the set of copies of tripods of edge length $L$ in $v$,
and let $J_L(v)$ denote the set of copies of joints of edge length $L$ in $v$. Note that
each pair of subwords $x,X$ of length $L$ in $v$ determines {\em two} elements of $J_L(v)$.
We define an involution $\iota$ on the set $J_L(v)$ interchanging such pairs.
If $v$ is understood, we just write $T_L$ and $J_L$.

Given $T$, a copy of $T(x,y,z)$ of length $L$, there are three associated joints
$J(x)$, $J(y)$, $J(z)$ which can be extended uniquely to maximal joints $J(x')$, $J(y')$
and $J(z')$. Note that $x$ is a suffix of $x'$, and so on.
Define $\partial T(x,y,z) = J(x') + J(y') + J(z')$ and extend $\partial$ to a linear
map from the space of measures on $T_L$ to the space of measures on $J_L$.

\begin{example}
Let $v=ABBAbAABAAbababaabbABBBBabbaaB$. The tripod of length $2$ as indicated:
$$ABB\underline{AbAA}BAAbabab\underline{aabb}AB\underline{BBBa}bbaaB$$
is associated to three joints: a pair $Ab,Ba$; a pair $AA,aa$; and a pair $bb,BB$.
The joint $Ab,Ba$ is contained in a maximal joint of length $5$:
$$\underline{ABBAb}AABAAbababaabbABBB\underline{Babba}aB$$ and
the joint $aa,AA$ is contained in a maximal joint of length $4$:
$$ABBAb\underline{AABA}Abab\underline{abaa}bbABBBBabbaaB$$
whereas the joint $bb,BB$ of length $2$ is already maximal:
$$ABBAbAABAAbababaa\underline{bb}AB\underline{BB}BabbaaB$$
\end{example}

The next lemma, although a simple consequence of the estimates in \S~\ref{L<1subsection}, 
is key. It shows that with very high probability, the collection
of {\em all} tripods of length $(1/2-\epsilon)m$ can be almost exactly glued up
in pairs:

\begin{lemma}\label{tripod_boundary_small}
Let $L/m=1/2-\epsilon$. Then with probability $1-O(C^{-n^c})$ there is an inequality
$|\partial \mu - \iota \partial \mu| = O(n^{-\epsilon/3} |\mu|)$,
where $\mu$ is the uniform measure on $T_L$, and $|\cdot|$ denotes mass of a (possibly signed)
measure.
\end{lemma}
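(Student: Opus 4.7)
The plan is to reduce the bound to a question about length-$L$ joints, then estimate the pointwise imbalance using Proposition~\ref{small_error} applied at length $2L$; note $2L/m = 1-2\epsilon<1$, so the proposition's equidistribution estimate is indeed available. First, factor $\partial$ through the ``length-$L$ boundary'' $\tilde\partial$: for each tripod copy, $\tilde\partial$ returns the formal sum of its three length-$L$ joints, recorded as ordered pairs of inverse-subword positions in $v$. The full $\partial$ is the push-forward of $\tilde\partial$ under the map extending each length-$L$ joint to its unique maximal joint. Because that extension is symmetric in its two endpoints it commutes with $\iota$, so $|\partial\mu - \iota\partial\mu| \le |\tilde\partial\mu - \iota\tilde\partial\mu|$, and it suffices to bound the right-hand side.

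For each ordered inverse pair $P = (p_1,p_2)$ of length-$L$ subwords of $v$, the value $\tilde\partial\mu(P)$ decomposes as a sum of three counts, one per joint position of a tripod. Consider for instance the contribution from joint position $1$ (the joint $J(x)$): a tripod copy $(a_1,a_2,a_3)$ realizes this joint as $P$ precisely when $a_1=p_1$ and $a_3=p_2-L$. These conditions force the incoming labels $x,y,z$ to be particular length-$L$ words read off from $v$ near $p_1$ and $p_2$, and the number of valid choices for $a_2$ is then $C_{\sigma_1(P)}(v)$ for a specific reduced word $\sigma_1(P)\in F_{2L}$ determined by $P$. The analogous analysis for joint positions $2$ and $3$ gives
\[ \tilde\partial\mu(P) = C_{\sigma_1(P)}(v) + C_{\sigma_2(P)}(v) + C_{\sigma_3(P)}(v), \]
up to a negligible correction from the distinct-last-letters condition required of valid tripod labels, whose failure is symmetric in $P$ and $\iota P$ up to lower-order terms.

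Applying Proposition~\ref{small_error} at length $2L$, with probability $1-O(C^{-n^c})$ every such count differs from the mean $\sim n/|F_{2L}| = \Theta(n^{2\epsilon})$ by at most $n^{\delta+\epsilon}$, for any fixed $\delta>0$. Hence $|\tilde\partial\mu(P) - \tilde\partial\mu(\iota P)| = O(n^{\delta+\epsilon})$ for each $P$. The number of ordered inverse pairs is $O(n\cdot n/|F_L|) = O(n^{3/2+\epsilon})$, while a parallel count using Proposition~\ref{small_error} gives $|\mu| = |T_L| = \Theta(n^{3/2+3\epsilon})$. Summing the pointwise bounds and dividing yields $|\tilde\partial\mu - \iota\tilde\partial\mu| = O(n^{-\epsilon+\delta})\,|\mu|$, so choosing $\delta = \epsilon/2$ (or any $\delta < 2\epsilon/3$) gives the claimed bound.

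The main obstacle is the combinatorial bookkeeping in the second step: pinning down exactly which length-$2L$ word $\sigma_i(P)$ arises as the constraint on edge $i$ of the tripod when joint $i$ is forced to equal $P$, and checking that the distinct-last-letters condition on tripod labels affects the $P$ and $\iota P$ counts in matching (hence essentially cancelling) ways. Once this is set up, the rest is a direct application of the subword equidistribution estimate.
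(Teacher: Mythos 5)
Your first step---replacing $\partial$ by the length-$L$ boundary $\tilde\partial$---is where the argument breaks. The inequality $|\partial\mu-\iota\partial\mu|\le|\tilde\partial\mu-\iota\tilde\partial\mu|$ is valid, but the right-hand side is $\Theta(|\mu|)$, not $o(|\mu|)$, so the reduction proves nothing. The culprit is exactly the point you defer to the end: the distinct-last-letters condition is \emph{not} symmetric between $P$ and $\iota P$, and the words $\sigma_i(P)$ are \emph{not} always reduced. Fix $P=(p_1,p_2)$ and joint position $1$: the forced labels are $x$ (starting at $p_1$), $y$ (the inverse of the length-$L$ subword following $x$), and $z$ (the length-$L$ subword preceding the copy of $X$ at $p_2$). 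Since $xY$ and $zX$ are subwords of the reduced word $v$, the only nontrivial tripod condition is that the last letters of $y$ and $z$ differ, i.e.\ that $\sigma_1(P)=yZ$ is reduced; and this fails precisely when the inverse pair $(x,X)$ extends to a longer inverse pair on the (right of $p_1$, left of $p_2$) side. The corresponding condition for $\iota P$ concerns extension on the \emph{other} side. So whenever $P$ sits strictly inside a longer maximal inverse pair---which happens for a definite positive proportion of the $\Theta(n^{3/2+\epsilon})$ ordered inverse pairs---exactly one of $\tilde\partial\mu(P)$, $\tilde\partial\mu(\iota P)$ is $0$ (an unreduced $\sigma$ has $C_\sigma(v)=0$, and Proposition~\ref{small_error} does not apply to it) while the other is $\approx 3n/|F_{2L}|\approx 3n^{2\epsilon}$. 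Summing these contributions already gives $|\tilde\partial\mu-\iota\tilde\partial\mu|\ge c\,n^{3/2+3\epsilon}=c\,|\mu|$ for some $c>0$.

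This asymmetry is exactly what the maximal-joint extension in the definition of $\partial$ is designed to absorb, so you cannot discard it. Because a valid tripod has distinct last letters for $y$ and $z$, its joint $J(x)$ can never extend on the (right of $x$, left of $X$) side---this is why the paper records that $x$ is a \emph{suffix} of $x'$. Hence among all length-$L$ sub-joints of a given maximal joint $J'$, only the extreme one carries tripods, and it carries $C_\sigma(v)$ of them (per joint position) for a genuinely reduced $\sigma\in F_{2L}$; the distinguished sub-joint of $\iota J'$ likewise carries $C_{\sigma'}(v)$ for a reduced $\sigma'$. Both counts are within a relative error $O(n^{-\epsilon/3})$ of $n/|F_{2L}|$ by Proposition~\ref{small_error}, and summing over maximal joints gives the lemma. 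Your length-$2L$ counting and the final mass computation match the paper's; the missing ingredient is that the pairing under $\iota$ must be performed on maximal joints, not on length-$L$ ones.
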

\begin{proof}
For any given $J(x)$ in $v$ contained in a maximal $J(x')$
we estimate the number of tripods $T(x',y,z)$ with 
$J(x')$ in $\partial T$. First of all, $y$ is determined, since the copy of $x$ associated
to $J$ is the initial subword of some $xY$. Similarly, $z$ is determined, since the
copy of $X$ associated to $J$ is the terminal subword of some $zX$. Therefore
the number of tripods is simply equal to the number of subwords of the form $yZ$ in $v$.

The number of copies of $yZ$ in $v$ is approximately 
$n/|F_{2L}|$, i.e.\/ about $n^{\epsilon}$ with an
error of size $O(n^{\epsilon/2+\delta})$ for any $\delta$, by Proposition~\ref{small_error}.
Taking $\delta = \epsilon/6$ for concreteness, the error is at most $O(n^{2\epsilon/3})$ which
is a fraction $O(n^{-\epsilon/3})$ of the total mass.

Since this is true for every joint $J(x)$, the lemma follows. 
\end{proof}

\subsection{Proof of upper bound}

The proof of Proposition~\ref{upper_bound_proposition}, is now straightforward:
\begin{proof}
Assemble the tripods and glue them in pairs along their common boundary joints.
By Lemma~\ref{tripod_boundary_small} all but $O(n^{-\epsilon/3})$ of the measure of the set
of tripods can be glued up this way, with probability $1-O(C^{-n^c})$. This holds even
conditioning on $v\in [F,F]$ with probability $1-O(C^{-n^c})$, with slightly different constants,
by Theorem~\ref{Sharp_theorem}.

This (partial) fatgraph $Y$ can be extended (usually in many ways) to a
complete fatgraph bounding some multiple of $v$ in $B_1^H(F)$ so that the
Euler characteristic of the added surface is proportional to the mass of the unglued part.
We explain how to do this.

Let $N$ be the function on the letters of $v$ whose value at a given letter is the number of
edges of tripods that contain it, and let $N'$ be the maximum of $N$. The function $N'-N$
is therefore non-negative, and on the other hand $\max N'-N = O(n^{-\epsilon/3})N'$. We translate
the problem of building a fatgraph that extends $Y$ as a problem of suitably 
gluing together a collection of rectangles. 

Each rectangle corresponds to some finite subword $w$ of $v$ which we call the {\em label} of the
rectangle. We think of the rectangle as having
height 1 and width equal to the length of $w$. We keep track not only of $w$ as a word in the 
generators, but also of {\em where} it appears as a subword of $v$. Color the top horizontal
edge of the rectangle blue, and the vertical sides red. 

\begin{figure}[htpb]
\labellist
\small\hair 2pt
\endlabellist
\centering
\includegraphics[scale=0.6]{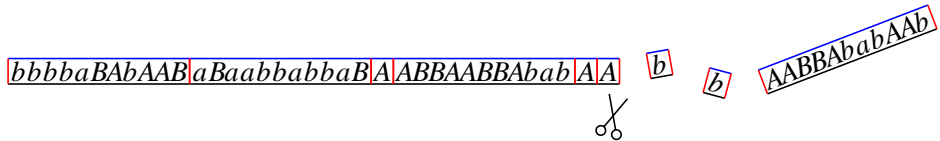}
\caption{$N'$ copies of $v$ cut up into long and short rectangles}\label{rectangle_figure}
\end{figure}

We want to glue together rectangles along
segments of the boundary of integer length, blue to blue and red to red, so that two red edges 
may be glued only if the words associated to the rectangles are consecutive subwords of $v$, and two 
blue segments are glued only if the paired letters on either side are inverse in $F$.

We take three rectangles for each copy of each tripod, with labels the subwords of $v$
corresponding to the edges of the tripod. We also take $N'- N$ rectangles for each letter of $v$,
with label that letter. So we have lots of ``long'' rectangles --- three for each tripod --- and
far fewer ``short'' rectangles (of length 1). By the definition of $N'$ and $N$, every letter
of $v$ appears as the rightmost letter of a label exactly as many times as the following letter of
$v$ appears as the leftmost letter of a label. So we could think of taking $N'$ strips labeled
$v$ and cutting them into long and short rectangles; see figure~\ref{rectangle_figure}.
Naturally, it is possible to glue up the
red segments in pairs compatibly. However, there are potentially many ways to do this, and it is
important to glue up blue edges first, as we now explain.

The long rectangles can be glued up along blue edges
in threes to build fattened tripods. Pairs of tripods can then be glued up along red edges 
corresponding to joints (note that pairs of tripods are glued up in this manner along red
segments of length 2). The result can be thought of in an obvious way as the partial fatgraph $Y$,
where the blue edges are the core graph. See figure~\ref{gluing_tripods_figure}.

\begin{figure}[htpb]
\centering
\includegraphics[scale=0.6]{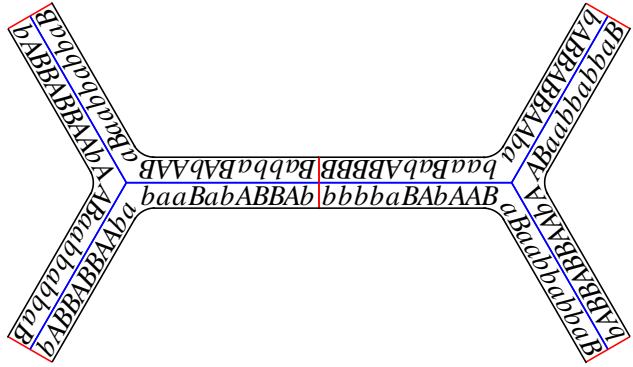}
\caption{Long rectangles glued in threes along blue edges to make tripods, and tripods glued in pairs along red
edges.}\label{gluing_tripods_figure}
\end{figure}

Recall that by hypothesis $v$ is homologically trivial, and note that
the rectangles corresponding to a given tripod have the same number of copies of each generator
as of its inverse. Consequently for each generator of $F$, there are as many short rectangles
labeled with this generator as are labeled with its inverse. We can therefore glue together these
short rectangles in pairs, so that every blue edge can be thus glued up. 

As observed above, the remaining unglued red segments can be glued up in pairs. We now perform
this gluing (in an arbitrary way). See figure~\ref{final_gluing_figure}. Note that the result
might have corners at which more than two paired red edges meet.

\begin{figure}[htpb]
\centering
\includegraphics[scale=0.5]{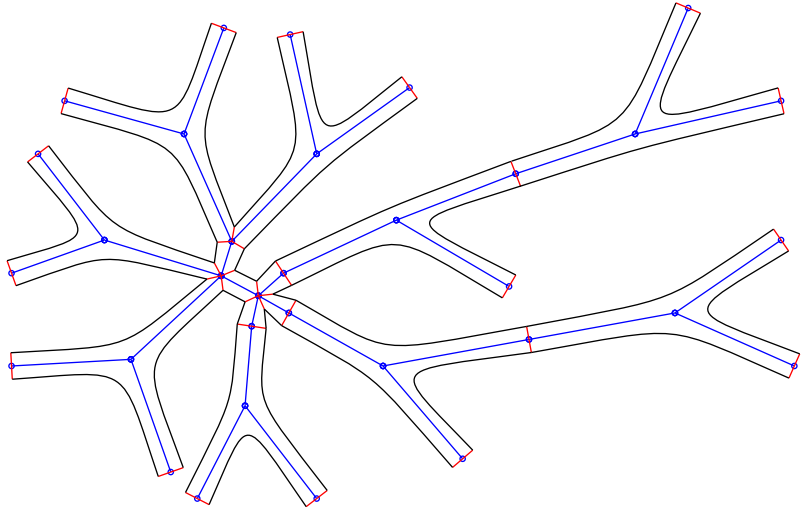}
\caption{Remaining red segments can be glued in pairs. This might produce red corners where more than
two red edges meet.}\label{final_gluing_figure}
\end{figure}

The resulting surface $Y'$ has no unglued red edges. The blue edges
form the core of the surface, and the labels and the way the blue edges sit in the surface
amounts to giving it the structure of a fatgraph over $F$ whose boundary is a multiple of $v$.
The fatgraph $Y$ sits in $Y'$ in an obvious way, and the contribution of $Y'-Y$ to
$-\chi$ is of order $(N'-N)|v|$, which is very small compared to the contribution from $Y$. In
particular, the average edge length $\ell m$ of $Y'$ differs from the
average edge length of $Y$ by at most $O(n^{-\epsilon/3})$, and therefore
satisfies $\ell > 1/2-\epsilon$. The proof now follows from 
Lemma~\ref{combinatorial_estimate}.
\end{proof}

\begin{remark}
The use of ergodic theory to construct an almost
equidistributed collection of pieces with prescribed geometry
that can be almost glued up is inspired by the techniques in Kahn-Markovic's recent proof 
\cite{Kahn_Markovic} of the surface subgroup conjecture in $3$-manifold topology, and
we are pleased to acknowledge our intellectual debt to this paper.
\end{remark}

\subsection{Lower bounds}

The goal of the next few sections is to prove the following estimate, which precisely complements
Proposition~\ref{upper_bound_proposition}. The Random Rigidity Theorem (i.e.\/ 
Theorem~\ref{random_rigidity_theorem}) follows immediately from these two propositions.

\begin{proposition}\label{lower_bound_proposition}
Let $v$ be a random reduced word in the commutator subgroup of length $n$. Then for
any $\epsilon>0$ and any $C$,
$$\log(2k-1)/6 - \scl(v)\log(n)/n \le \epsilon$$ 
with probability $1-O(n^{-C})$.
\end{proposition}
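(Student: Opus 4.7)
By Lemma~\ref{combinatorial_estimate}, the lower bound is equivalent to showing that any trivalent fatgraph $Y$ with $\partial Y = Nv$ has average edge length $\ell m$ satisfying $\ell \le 1/2 + O(\epsilon)$, with probability $1 - O(n^{-C})$. The plan is to analyze the edge-length distribution of $Y$ and show that, with high probability, most of the boundary length $Nn = 2m \sum_e \ell_e$ sits on edges of length at most roughly $m/2$.

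First, for edges of length $\ge (1+\delta)m$ with $\delta > 0$ fixed, Proposition~\ref{long_inverse_word_bound} gives a direct bound: each such edge encodes an inverse pair of subwords of length $\ge (1+\delta)m$ in $v^N$, and such pairs number at most $O(N^2 n^{1-\delta+\epsilon})$, so integrating over length scales shows that the total length contributed by these edges is $o(\sum_e \ell_e)$. Very long edges thus contribute negligibly to the average.

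The crucial (and hardest) step is bounding the contribution of ``medium'' edges, with $\ell_e \in (1/2, 1+\delta]$. Here Proposition~\ref{long_inverse_word_bound} is not tight. The strategy is to exploit the tripod structure of $Y$: at each trivalent vertex, the three incident edges define a local tripod that must appear as a copy in $v^N$, in the sense of \S\ref{fatgraph_subsection}. For tripods of edge length $L$ close to $m/2$ and above, the total count in $v^N$ is approximately $N^3 n^{3 - 3L/m}/3$, which becomes restrictive as $L$ approaches $m$. Combined with a double-counting over vertex-edge incidences (each edge sits at two vertices, each vertex has three edges), this should yield a bound of the form: the number of edges with $\ell_e > 1/2 + \epsilon$ is $o(E)$, forcing $\ell \le 1/2 + O(\epsilon)$. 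In spirit this mirrors the tripod equidistribution of Lemma~\ref{tripod_boundary_small} used in the upper bound, but applied as a constraint rather than a construction.

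The main obstacle is turning the tripod count (a bound on the number of vertices) into a bound on the number of long edges, particularly when a vertex has only some (not all three) long edges. One likely approach is to iterate the analysis across length scales: progressively refine the decomposition into short/medium/long edges, at each step using a tripod-type count at a coarser scale to control the finer scale below. An alternative route is via Bavard duality, constructing a quasi-morphism $\phi$ on $F$ (for instance, a weighted sum of Brooks counting quasi-morphisms on words of length $\sim m$, with coefficients adapted to $v$ via the counting functions $H_f$ of \S\ref{counting_function_subsection}) achieving the ratio $\phi(v)/D(\phi) \ge (1/3 - O(\epsilon))n/m$; but this direction faces its own challenge in controlling the defect of a high-dimensional combination, which tends to cancel against the gain in $\phi(v)$ unless the supports of the Brooks functions are chosen carefully.
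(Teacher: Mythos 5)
Your reduction to bounding the average edge length via Lemma~\ref{combinatorial_estimate}, and your disposal of edges of length $\ge(1+\delta)m$ via Proposition~\ref{long_inverse_word_bound}, both match the paper. But the heart of the proof --- ruling out a fatgraph whose average edge length exceeds $(1/2+\epsilon)m$ because of many ``medium'' edges of length in $((1/2+\epsilon)m,(1+\delta)m]$ --- is exactly the step you leave open, and the obstacle you name (a vertex with one long and two short incident edges yields only a short local tripod, hence no useful constraint) is real and is not overcome by either of your suggested fixes. Single-vertex (tripod) counts cannot work: for each individual vertex the probabilistic ``cost'' of its local configuration can be fully paid for by the entropy of choosing its location, so no contradiction arises vertex by vertex. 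The paper's key idea is to replace tripods by \emph{combs}: for a boundary subword $b$ one considers the chain of $d$ consecutive edges $e_1,\dots,e_d$ attached along $b$, together with the complementary boundary arcs, giving a configuration with $2d+1$ edges of total length $Lm$. The probability that a ($\delta$-regular) $b$ supports such a comb is shown (after a careful analysis of correlations between \emph{overlapping} subwords, which you do not address and which is where most of the technical work lies) to be $O(n^{-T+O(\delta)})$ with $T=L-d-1$. The point is that when $L/(2d+1)>1/2+\epsilon$ the excess $T$ grows \emph{linearly in $d$}, so taking $d\sim 1/\epsilon$ large beats the $n^{1+O(\delta)}$ union bound over locations and length vectors and yields the $1-O(n^{-C})$ probability for arbitrary $C$. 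The double count that each edge lies in exactly $4d+2$ combs of complexity $d$ converts ``average edge length $>(1/2+\epsilon)m$'' into ``a definite fraction of combs have $L/(2d+1)>1/2+\epsilon$'', completing the contradiction. Your proposal to ``iterate across length scales'' does not supply this accumulation-of-excess mechanism.

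Your alternative route via Bavard duality is also known not to give the sharp constant: the paper's \S~\ref{quasimorphism_section} constructs exactly the kind of counting quasimorphism you describe and shows it certifies only $\scl(v)\ge n\log(2k-1)/12\log(n)$, i.e.\ half the correct value; no certifying quasimorphism achieving the constant $1/6$ is known. So as written the proposal has a genuine gap at the decisive step, and the fallback does not close it.
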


Note that the probability estimate associated to the upper bound is exponential, whereas
the estimate associated to the lower bound is merely polynomial (of arbitrarily large degree).
This disparity is an artifact of the method of proof. A worse lower bound, but with
exponential bounds on the probability of deviation, is obtained in 
\S~\ref{quasimorphism_section} using the method of {\em quasimorphisms}.

\subsection{Combs}

Let $b$ be a subword of $v$, and consider some copy of $b$ in the boundary of an extremal
fatgraph $Y$ for $v$. Recall that by our convention we artificially split open vertices
of higher valence so that $Y$ is trivalent, although it might have some edges of length $0$.
The subword $b$ is contained in a segment $\sigma$ of $Y$, which
is incident to a sequence of edges $e_1,e_2,\cdots, e_d$ of $Y$ in order. Call the subgraph
of $Y$ consisting of the support of $b$ together with the union of the $e_i$ a {\em comb}.

Let $c_1,\cdots, c_d$ be the labels on the edges $e_i$ (oriented to point in to $\sigma$).
Furthermore, the vertices of the $e_i$ subdivide $b$ into subwords $b_0,\cdots, b_d$, where
we stress that some $b_i,c_i$ might have length $0$.
Then there are boundary labels of $Y$ of the form 
$B_d C_d, c_dB_{d-1}C_{d-1}, \cdots, c_2B_1C_1, c_1B_0$ (see Figure~\ref{comb_figure}). 
By the definition of an extremal fatgraph, these boundary labels are (cyclic) subwords of $v$.

\begin{figure}[htpb]
\labellist
\small\hair 2pt
\pinlabel $b_0$ at 25 -2
\pinlabel $b_1$ at 65 -2
\pinlabel $b_2$ at 105 -2
\pinlabel $b_3$ at 145 -2
\pinlabel $b_4$ at 185 -2
\pinlabel $B_0$ at 25 12
\pinlabel $B_1$ at 65 12
\pinlabel $B_2$ at 105 12
\pinlabel $B_3$ at 145 12
\pinlabel $B_4$ at 185 12
\pinlabel $c_1$ at 38 25
\pinlabel $c_2$ at 78 25
\pinlabel $c_3$ at 118 25
\pinlabel $c_4$ at 158 25
\pinlabel $C_1$ at 53 25
\pinlabel $C_2$ at 93 25
\pinlabel $C_3$ at 133 25
\pinlabel $C_4$ at 173 25
\endlabellist
\centering
\includegraphics[scale=1]{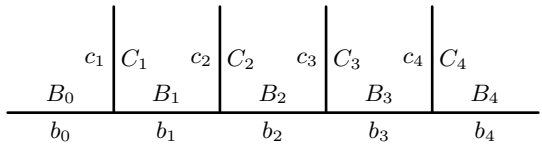}
\caption{A comb with edge labels.}\label{comb_figure}
\end{figure}

This suggests the following definition:

\begin{definition}
Given a word $b \subset v$ a {\em comb on $b$} 
is a family of subwords of $v$ of the
form $B_dC_d, c_dB_{d-1}C_{d-1}, \cdots , c_1B_0$.
The {\em complexity} of the cone is $d$ (as above) and the {\em length} is
$L$, where $|b| + \sum |c_i| = Lm$.
\end{definition}

We would like to bound (in probability) the length of a comb in terms of its
complexity. Fix a big constant $L'$, and let $b\subset v$ be a subword of length $L'm$.
We would like to construct a comb on $b$ for which $L/d$ is as big as possible.
This amounts to choosing a partition of $b$ into $d$ successive subwords $b_i$ of
length $L_im$ (where $L_i=0$ is allowed), then choosing copies of $B_i$ in $v$, and defining
$c_i$ to be the maximal subword following the copy of $B_i$ for which $C_i$ precedes 
the copy of $B_{i-1}$. Let these maximal $c_i$ have length $K_im$.

Note that the comb has length $L=\sum_{i=0}^d L_i + \sum_{i=1}^d K_i$ and complexity $d$.
We would like to bound in probability the maximum ratio $L/(2d+1)$, at least for
typical $b$ of some fixed length $L'm$ where $L'=\sum L_i$.

\medskip

By Proposition~\ref{small_error} and Proposition~\ref{long_inverse_word_bound}
there are almost exactly $n^{1-L_i}$ possible locations of each $B_i$ in $v$ for $L_i<1$,
and the chance that there is some $B_i$ at all when $L_i>1$ is at most $n^{1-L_i}$. If we assume
that the prefixes and suffixes of the $B_i$ of fixed length are evenly distributed, 
then for any fixed $T$, there should be an estimate
$$\Pr\left(\sum K_i > T+\sum(1-L_i)\right) = O(n^{-T})$$
If $T$ is very big but fixed, and small compared to $L'=\sum L_i$, then we can estimate
$L \le T+(d+1)$, and therefore $L/(2d+1) \le 1/2+\epsilon$ for any
$\epsilon$ with probability $1-O(n^{-T})$. This is good enough to give the desired bound in 
Proposition~\ref{lower_bound_proposition}, by Lemma~\ref{combinatorial_estimate}.

Notice that this heuristic argument is almost rigorous: prefixes and suffixes of the $B_i$ are
not perfectly independent, but their correlation decays exponentially fast with the
distance between $B_i$ and $B_j$. Thus we need only examine the cases in which there
are $C_{i+1}B_ic_i$ and $C_{j+1}B_jc_j$ that overlap. In order to obtain the desired
estimate, it is necessary to make some {\it a priori} assumptions about a cone on $b$,
which will turn out to be justified for {\em most} combs in any given extremal fatgraph $Y$.

\begin{definition}
A subword $b$ of $v$ is {\em $\delta$-regular} if there is no subword $b'$ of length 
$(1+\delta)m$ such that $B'$ is in $v$, and if all subwords of $b$ of 
length $\delta$ and their inverses are distinct.

A comb on $b$ is {\em $\delta$-regular} if $b$ is $\delta$-regular, and if all the
$c_i$ have length at most $(1+\delta)m$.
\end{definition}

Let $v$ be a random word in $F'$ of length $n$, and let $Y$ be an extremal trivalent 
fatgraph for $v$ (possibly with some edges of length $0$). For any $d$, we can consider
the set of combs of $Y$ of complexity $d$. The following lemma justifies the
definition of $\delta$-regular:

\begin{lemma}\label{delta_regular_lemma}
Let $v$ be a random word in $F'$ of length $n$, and let $Y$ be an extremal fatgraph for $v$.
Then for any $d$, the proportion of combs of $Y$ of complexity $d$ that are not $\delta$-regular
is at most $O(n^{-\delta/2})$, with probability $1-O(C^{-n^c})$.
\end{lemma}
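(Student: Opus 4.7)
The plan is to extract from each non-$\delta$-regular comb a concrete structural feature of the word $v$, and then count such features using the machinery developed in Section~\ref{random_word_section}. Failure of $\delta$-regularity for a comb on a subword $b$ falls into three types: (i) $b$ contains a subword $b'$ of length $(1+\delta)m$ whose inverse $B'$ also appears in $v$; (ii) $b$ contains two distinct positions at which length-$\delta m$ subwords coincide up to inverse; (iii) some cone edge label $c_i$ has length exceeding $(1+\delta)m$.

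For (i), I would apply Proposition~\ref{long_inverse_word_bound} at scale $L=1+\delta$, which gives, with probability $1-O(C^{-n^c})$, at most $n^{1-\delta+\epsilon}$ positions in $v$ whose length-$(1+\delta)m$ subword has its inverse in $v$; each such bad position lies in only $O(m)$ boundary segments of $Y$, contributing $O(m\cdot n^{1-\delta+\epsilon})$ bad combs. For (iii), the key structural input is that both sides of any edge of the extremal fatgraph $Y$ carry mutually inverse reduced labels, and each side sits as a subword of $v$ via the map $\partial Y \to v$; hence an edge with $|c_i|>(1+\delta)m$ yields an inverse pair of length $>(1+\delta)m$ in $v$, so Proposition~\ref{long_inverse_word_bound} again bounds the number of such edges by $n^{1-\delta+\epsilon}$, and each participates in at most $O(m)$ combs of complexity $d$. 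For (ii), the probability that two fixed length-$\delta m$ windows of $v$ are inverse to each other is $\Theta(n^{-\delta})$; the Chernoff-type estimate underlying Proposition~\ref{small_error}, combined with a union bound over positions, controls the total bad-$b$ count at $O(n^{1-\delta+\epsilon})$.

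Combining the three estimates and dividing by the total number of combs of complexity $d$ in $Y$ --- which is $\Theta(n)$, since $\partial Y$ has length $\Theta(n)$ and each segment-position lies in a bounded number of such combs --- yields a bad proportion of $O(n^{-\delta+2\epsilon})$, and this is $O(n^{-\delta/2})$ upon choosing $\epsilon<\delta/4$. The main obstacle is case (iii): translating the geometric condition ``$|c_i|$ is large'' into a useful probabilistic statement about $v$ requires carefully identifying the inverse pair of subwords of $v$ that a long edge produces, and controlling the per-edge combinatorics of combs so that overcounting does not dominate the final bound.
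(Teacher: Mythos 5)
Your proposal matches the paper's (much terser) proof: both split the failure of $\delta$-regularity into the same three cases, handle the long-inverse-subword conditions (your cases (i) and (iii)) via Proposition~\ref{long_inverse_word_bound} together with the observation that the two sides of a fatgraph edge carry inverse labels appearing in $\partial Y$, treat the coincidence of short subwords of $b$ by the conditioning/Chernoff argument of Proposition~\ref{large_cardinality}, and conclude by dividing by the $\Theta(Nn/\log n)$ combs of complexity $d$ (each edge lying in $4d+2$ of them). The only caveat is to keep the multiplicity $N$ of $\partial Y$ over $v$ in both numerator and denominator, as the paper notes; otherwise your argument is the paper's.
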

\begin{proof}
By Proposition~\ref{long_inverse_word_bound}, with probability $1-O(C^{-n^c})$ there are
at most $n^{1-\delta/2}$ subwords of $v$ of length $(1+\delta)m$ whose inverse also
appears in $v$ (in fact, we could take any number $<\delta$ in place of $\delta/2$);
hence the proportion of combs of complexity $d$ that contain an edge of length
$\ge (1+\delta)m$ is at most $(4d+2)n^{-\delta/2}$, since every edge of $Y$ is
contained in $4d+2$ combs of complexity $d$, and $\partial Y$ represents $Nv$ for some $N$.

An argument similar to Proposition~\ref{large_cardinality} 
establishes that subwords of typical $b$ of length $\delta$ are 
distinct, with probability $O(C^{-n^c})$.
\end{proof}

\subsection{Overlaps}

We now restrict attention to a fixed $\delta$-regular word $b$, and
consider a random word $v$ conditioned to contain $b$ as a subword.
The arguments in this section depend on order-of-magnitude estimates
of probability, expressed as a power of $n$.

Fix vectors of lengths $L_i, K_i <(1+\delta)m$, 
and for each choice of $d$
locations in $v$, consider the probability that the subwords $e_iD_{i-1}C_i$
of length $K_i+L_{i-1}+K_{i-1}$ starting at these locations constitute a comb on
$b$; we call such an occurrence a {\em matching}, and we want to estimate the
probability of a matching at a given $d$-tuple of locations. 
We also refer to a vector of $d$ locations in $v$ as above as a {\em configuration}.
If the subwords do not overlap, this probability is less than
$n^{-(\sum L_i + \sum K_i)}$. So it suffices to estimate the probability
in the case that some subwords do overlap. This is somewhat fiddly,
and depends on an analysis of the combinatorial possibilities for the overlap.
However, the estimates in every case are entirely elementary.

For each $j\ge 2$, let $P_jm$ be the total length where at least $j$ words
overlap. Define the {\em total overlap}, counted with multiplicity, to be 
$P: = \sum_{j\ge 2} P_j$. The total contribution to $P$ from overlaps of $D_i$
with $D_j$ will be $O(\delta)$, since $b$ is $\delta$-regular.
If part of some $e_i$ (resp. $C_i$) is contained in an overlap, but the corresponding
part of $C_i$ (resp. $e_i$) is not, this overlap does not significantly
affect the probability of a matching. If corresponding parts of $C_i, e_i$
both overlap $D_j$, then again necessarily this overlap will be of size
$O(\delta)m$, since $b$ is $\delta$-regular. So to estimate the probability of
a matching, it suffices to consider overlaps among the various $C_i,e_j$.
Let $P'_jm$ be the total length where at least $j$ such subwords overlap,
and analogously define $P':=\sum_{j\ge 2} P'_j$. 

\begin{lemma}
With notation as above, the probability of a matching in a given configuration 
is at most $n^{P'/2-(\sum L_i + \sum K_i)+O(\delta)}$.
\end{lemma}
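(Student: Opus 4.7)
The plan is to treat a matching as a system of elementary letter-constraints on $v$ and bound the probability by counting independent constraints. Two kinds of constraint appear: for each $i$, the $L_{i-1}m$ letters of $v$ inside the middle $B_{i-1}$-block of the $i$th configured subword must equal specific letters (determined by $b$), and the $K_im$ letters of $v$ in the $e_i$-block must be inverses of the corresponding letters in the paired $C_i$-block elsewhere. Model this by a graph whose vertices are the positions of $v$ covered by the configuration: positions subject to ``hard'' ($B$-type) constraints are marked, and each ``soft'' pairing between an $e_i$-letter and its $C_i$-partner contributes an edge. To leading order the probability of matching is $(2k-1)^{-(|U|-N)}$, where $|U|$ is the number of covered positions and $N$ is the number of soft-only connected components. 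When there are no overlaps, $|U|-N = (\sum L_i + \sum K_i)m$, recovering the expected $n^{-(\sum L_i + \sum K_i)}$.

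I would next bound the decrease in $|U|-N$ caused by overlaps. By the discussion preceding the lemma, $\delta$-regularity of $b$ confines $B$-with-$B$ overlap to total length $O(\delta m)$ and essentially rules out the case where corresponding parts of an $e_i$ and its partner $C_i$ both meet some $B_j$. ``Mixed'' overlaps in which only one of a paired $(e_i,C_i)$ meets a $B_j$-block propagate hardness to the partner but leave $|U|-N$ unchanged, as a direct case-check confirms. This reduces the problem to overlaps purely among the $e$-subwords. The key pairwise fact is that an overlap of $c_i$ with $c_j$ of length $\ell m$ reduces $|U|$ by $\ell m$ but also reduces $N$ by $\ell m$ (since the $\ell m$ overlap positions then sit in soft components of size three with their $C_i, C_j$ partners), giving no net saving; however, if the partners $C_i$ and $C_j$ overlap in a compatible region of length $\ell m$ (a ``matched'' overlap), then $|U|$ drops by $2\ell m$ while $N$ drops only by $\ell m$, yielding a net saving of $\ell m$. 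A matched overlap contributes $2\ell$ to $P'$ (once each from the $c$- and $C$-overlaps), while an unmatched overlap contributes $\ell$; in either case the saving in $|U|-N$ is at most half the contribution to $P'$.

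The main obstacle is extending this pairwise analysis to arbitrary overlap patterns, including regions where $j\ge 2$ of the $e$-subwords overlap simultaneously, and verifying that the bound aggregates additively so that the total saving in $|U|-N$ is at most $(P'/2 + O(\delta))m$. I would handle this by decomposing a $j$-fold overlap into its pairwise constituents, checking by induction on $j$ that the component-counting is compatible with this decomposition; losses from end effects, from non-generic relative orientation of overlapping subwords, and from deviations of $v$ from the uniform letter distribution (due to the Markov constraint of being reduced) are absorbed into the $O(\delta)$ term. Once the accounting is complete, the exponent bound $|U|-N \ge (\sum L_i + \sum K_i - P'/2 - O(\delta))m$ converts directly to the probability estimate $n^{P'/2 - \sum L_i - \sum K_i + O(\delta)}$, as claimed.
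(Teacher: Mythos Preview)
Your proposal is correct and rests on the same key observation as the paper: an overlap among the $e$/$C$ subwords only increases the probability of a matching when it is a ``matched'' overlap (both an $e_i$-subword and its paired $C_i$-subword participate), and such a matched overlap of length $\ell m$ contributes $2\ell$ to $P'$ while saving at most $n^\ell$ in probability --- hence the factor $n^{P'/2}$.

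The difference is entirely one of presentation. The paper dispatches the lemma in two sentences, simply asserting that an overlap in $e_i$ must correspond to an overlap in $C_i$ to gain anything, so the increase over the naive probability $n^{-(\sum L_i + \sum K_i)}$ is at most $n^{P'/2}$. You instead build a systematic graph-and-component framework (hard constraints from the $B_i$, soft edges pairing $e_i$-letters with $C_i$-letters, counting $|U|-N$) and carefully decompose overlaps into pairwise pieces with an inductive check for higher multiplicities. Your machinery is more explicit about what happens when $j\ge 2$ subwords overlap simultaneously, which the paper leaves implicit; on the other hand, the paper's terse phrasing makes clear that the whole lemma is really just the single observation about matched overlaps, and your elaborate accounting ultimately reduces to that same observation.
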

\begin{proof}
An overlap in some subword of $e_i$ of length $lm$ 
must correspond to an overlap in the
corresponding subword of $C_i$ to increase the probability of a match by at
most $n^l$; so the increase over the ``naive'' probability of a match is at
most a factor of $n^{P'/2}$.
\end{proof}

On the other hand, there are $n^d$ sets of locations of the subwords, and for
each given location of one subword, there are only $O(\log(n))$ locations of
any other subword that overlaps it. Two subwords $e_iD_{i-1}C_{i-1}$ and
$e_jD_{j-1}C_{j-1}$ can contribute at most $2(1+\delta)$ to $P'$, precisely
if $e_i=e_j$ and $C_{i-1}=C_{j-1}$. We deduce the following lemma:

\begin{lemma}\label{comb_estimate}
Let $L_i, K_i$ be some fixed vector of lengths with $L_i,K_i<1+\delta$, and define
$L=\sum_i L_i + \sum_i K_i$. Suppose $b$ is an $\delta$-regular subword of $v$.
Then the probability that there is a comb over $b$ with the prescribed 
lengths is at most $O(n^{-T+O(\delta)})$ where $T=L-d-1$. Consequently if
$L/(2d+1) \ge 1/2+\epsilon$ and $\delta$ is sufficiently small compared to $\epsilon$,
and $d$ is sufficiently big compared to $\epsilon$, we can make $T$ as big as desired.
\end{lemma}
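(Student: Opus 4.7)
The strategy is a union bound over all possible combs, organized by overlap pattern, combining the per-configuration probability estimate already established in the preceding paragraphs with an explicit count of configurations.

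First I would enumerate the candidate configurations: a comb on $b$ with prescribed lengths $\{L_i\},\{K_i\}$ is determined by the starting locations in $v$ of its $d+1$ boundary subwords $B_d C_d,\, c_d B_{d-1} C_{d-1},\ldots, c_1 B_0$, giving at most $n^{d+1}$ possibilities. For a configuration in which these subwords are pairwise non-overlapping, the per-configuration probability bound gives at most $n^{-L+O(\delta)}$, since the constraints imposed on $v$ amount to the $Lm$ ``fixed'' letters coming from the $B_i$ and $C_i$ portions, while the tooth labels $c_i$ are simply read off from $v$ at the chosen locations subject only to the consistency $C_i=c_i^{-1}$.

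Next I would organize the overlapping configurations by the number $p$ of overlapping pairs of boundary subwords. Each such overlap constrains the relative location of the two subwords to $O(\log n)$ positions, so the count of configurations exhibiting $p$ overlapping pairs is at most $n^{d+1-p}(\log n)^p$. By the $\delta$-regularity of $b$, each pairwise overlap contributes at most $2(1+\delta)$ to $P'$, giving $P'/2\le(1+\delta)p$; the preceding lemma then bounds the per-configuration probability by $n^{(1+\delta)p-L+O(\delta)}$. Multiplying yields a total contribution of $n^{-(L-d-1)+O(\delta)}=n^{-T+O(\delta)}$ from each overlap pattern, and since there are only polynomially many patterns indexed by $p\le d$, summing preserves the bound.

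For the final consequence the arithmetic is elementary: $L/(2d+1)\ge 1/2+\epsilon$ gives $L\ge d+\tfrac{1}{2}+(2d+1)\epsilon$, so $T=L-d-1\ge(2d+1)\epsilon-\tfrac{1}{2}$, which can be made arbitrarily large by taking $d$ large compared to $1/\epsilon$. Taking $\delta$ small compared to $\epsilon$ ensures the $O(\delta)$ error is swamped.

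The main obstacle I expect is the bookkeeping of overlap patterns, particularly when overlaps chain together (subword $A$ overlaps $B$ which overlaps $C$). I expect this to be benign because chaining only further constrains the free location parameters, so the estimate $n^{d+1-p}(\log n)^p$ is in fact only loosened by additional dependencies; the worst case is independent pairwise overlaps, which the calculation above already handles. A secondary care is needed to verify that the $P'$ accounting from the prior lemma is compatible with the configuration count, i.e.\ that overlaps between $B_i$-portions (as opposed to $c_i$/$C_i$-portions) contribute only $O(\delta)$ by $\delta$-regularity rather than using up the $P'/2$ budget.
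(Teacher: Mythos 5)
Your proposal follows essentially the same route as the paper's proof: a union bound over location configurations, stratified by the amount of overlap, combining the per-configuration bound $n^{P'/2-L+O(\delta)}$ with the count of configurations realizing a given overlap (the paper states this as ``at most $n^{d-r+O(\delta)}$ sets of locations with $P'\ge 2r$''), and your worked-out arithmetic for the final consequence matches what the paper leaves implicit. The only discrepancy --- you count $n^{d+1}$ configurations where the paper counts $n^{d}$ --- is absorbed by the $-1$ slack built into $T=L-d-1$, so the argument is correct as written.
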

\begin{proof}
As above, each set of locations has probability at most $n^{P'/2 - L + O(\delta)}$ of a
matching. Moreover, there are $n^d$ sets of locations, and at most $n^{d-r+O(\delta)}$
sets of locations for which $P'\ge 2r$. The estimate follows.
\end{proof}

\subsection{Proof of lower bound}

We now give the proof of Proposition~\ref{lower_bound_proposition}
\begin{proof}
By Lemma~\ref{combinatorial_estimate}, it suffices to show for every $C$ and
every $\epsilon$ that the average length
$\ell m$ of the edges of an extremal fatgraph $Y$ is at most $1/2+\epsilon$, 
with probability $1-O(n^{-C})$. By Theorem~\ref{Sharp_theorem}, conditioning
that $v$ lies in $[F,F]$ only affects probabilities by at most a factor of $O(n^{k/2})$.

By Proposition~\ref{long_inverse_word_bound}, there are only $O(1)$ subwords of 
$v$ of length $\ge 2m$ and $O(n^{1-\delta/2})$
of length $\ge (1+\delta)m$, whose inverse also appears in $v$,
 with probability $1-O(C^{-n^c})$. So edges of length $\ge (1+\delta)m$ affect
$\ell$ negligibly, and the fraction of combs containing such subwords are
similarly negligible.

Choose some very large constant $d$, roughly of size $O(1/\epsilon)$, 
and consider the set of all combs with complexity
$d$ in $Y$. Because $Y$ is (formally) trivalent, every edge occurs in exactly $(4d+2)$
such combs --- each comb has $2d+1$ edges, and each edge has two sides.
By Lemma~\ref{delta_regular_lemma}, if $\ell \ge 1/2+\epsilon$, a definite 
fraction of these combs must be $\delta$-regular, and satisfy $L/(2d+1) > 1/2+\epsilon$. 

On the other hand, by Lemma~\ref{comb_estimate}, for any $\delta$-regular subword $b$
and any given vector of lengths $<1+\delta$ the probability that there is a comb over
$b$ with prescribed lengths is at most $O(n^{-T+O(\delta)})$ where $T=L-d-1$.
Since there are at most $n$ possible locations in $v$ for such a subword $b$, and
since there are at most $((1+\delta)m)^{2d+1} < n^\delta$ vectors of lengths, the
probability that there is {\em any} $\delta$-regular comb with complexity $d$ and
length $L$ is at most $O(n^{-T+1+O(\delta)})$. So for any $C$ and any $\epsilon$,
if $d$ is sufficiently large and $L/(2d+1) > 1/2 + \epsilon$, 
{\em no} such comb exists, with probability $1-O(n^{-C})$. The proof follows.
\end{proof}

\begin{remark}
A more careful analysis would almost certainly improve the estimate of the probability
of a large negative deviation. The probability that a specific $\delta$-regular subword
is part of a $\delta$-regular comb with big $d$ and $L/(2d+1)>1/2+\epsilon$ 
is polynomial in $n$, and to violate the desired
lower bound on $\scl$ we must construct a fatgraph containing a {\em definite proportion}
of such big $\delta$-regular combs. However, the events that distinct subwords
$b,b'$ are parts of such $\delta$-regular combs are not obviously independent, and even
estimating their correlation appears hard. Nevertheless, heuristically one would
expect the true probability of a deviation to be exponential in (some power of) $n$.
\end{remark}

\subsection{The Random Norm Theorem}

In fact, it is not much more work to derive the following theorem, which specializes
to Theorem~\ref{random_rigidity_theorem} when $d=1$:

\begin{theorem}[Random Norm Theorem]\label{random_norm_theorem}
Let $F$ be a free group of rank $k$, and for fixed $d$, let $v_1,v_2,\cdots,v_d$ be 
independent random
reduced elements of length $n_1,n_2,\cdots,n_d$ conditioned to lie in $[F,F]$, where
without loss of generality we assume $n_1\ge n_i$ for all $i$. Let $V$ be the subspace of
$B_1^H(F)$ spanned by the $v_i$. Then for any $\epsilon>0, C>1$ and real numbers $t_i$,
$$|\scl(\sum t_iv_i)\log(n_1)/n_1 - \log(2k-1)(\sum |t_i|n_i)/6n_1|\le \epsilon$$
with probability $1-O(n_1^{-C})$.
\end{theorem}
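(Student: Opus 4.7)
I will follow the two-step strategy used for the Random Rigidity Theorem ($d = 1$), working throughout at the single scale $m_1 := \log(n_1)/\log(2k-1)$. The relation $g^n - ng$ in $B_1^H(F)$ (with $n = -1$) gives $v_i^{-1} = -v_i$, so by replacing $v_i$ with $v_i^{-1}$ when $t_i < 0$ (which preserves the inversion-invariant uniform distribution) I may assume $t_i \geq 0$; continuity of the $\scl$-norm then lets me reduce to positive integer $t_i$ by density. Represent $V := \sum t_i v_i$ by the formal multi-word $W$ consisting of $t_i$ disjoint copies of each $v_i$; for fixed $d$ and $t_i$, its total length $N := \sum t_i n_i$ satisfies $n_1 \leq N = \Theta(n_1)$.

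\textbf{Upper bound.} I build a near-extremal fatgraph for $V$ by the tripod construction of Proposition~\ref{upper_bound_proposition}, using tripods of edge length $L = (1/2 - \epsilon) m_1$ whose three edges are subwords of $W$ (possibly drawn from distinct $v_i$). For each $v_i$ with $n_i \geq n_1^{1-2\epsilon}$, Proposition~\ref{small_error} applies at scale $L < m_i$, giving equidistribution of length-$2L$ subwords inside $v_i$. Summing across these $v_i$ and using independence of the $v_i$ yields a multi-word version of Lemma~\ref{tripod_boundary_small}: the uniform measure $\mu$ on tripods in $W$ satisfies $|\partial \mu - \iota \partial \mu| = O(n_1^{-\epsilon/3})\,|\mu|$ with probability $1 - O(C^{-n_1^c})$. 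Tripods may then be glued in pairs along joints up to a small deficit, which is completed by the rectangle argument of Proposition~\ref{upper_bound_proposition} (using homological triviality of $V$). Words $v_i$ with $n_i < n_1^{1-2\epsilon}$ contribute at most $d\, n_1^{1-2\epsilon} = o(n_1/\log n_1)$ to $N$ and are absorbed via the crude bound $\scl(v_i) \leq n_i/2$. Lemma~\ref{combinatorial_estimate} applied to $V$ then yields the desired upper bound.

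\textbf{Lower bound.} Let $Y$ be an extremal trivalent fatgraph for $V$, so $\partial Y = M V$ with average edge length $\ell m_1$; by Lemma~\ref{combinatorial_estimate} we have $\scl(V) = N/(12 \ell m_1)$, and it suffices to show $\ell \leq 1/2 + \epsilon$ with probability $1 - O(n_1^{-C})$. I rerun the comb argument of Proposition~\ref{lower_bound_proposition}, allowing the base of a comb to be a subword of any $v_i^{\pm 1}$ in $\partial Y$. The required multi-word analogues of Propositions~\ref{small_error} and~\ref{long_inverse_word_bound} follow from the single-word versions applied to each $v_i$ separately, together with a union bound over cross-word inverse matches: for a typical length-$(1+\delta) m_1$ subword of $v_i$, the chance that its inverse occurs in the independent $v_j$ is at most $n_j/(2k-1)^{(1+\delta) m_1} = O(n_1^{-\delta})$, and summing over pairs $(i,j)$ preserves the $n_1^{1-\delta/2}$ bound. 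With these inputs, Lemma~\ref{comb_estimate} applies at scale $m_1$ to any $\delta$-regular comb in $Y$, forcing no such comb of complexity $d \sim 1/\epsilon$ with $L/(2d+1) > 1/2 + \epsilon$ to exist.

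The principal difficulty lies in the uniform bookkeeping needed to make the multi-word equidistribution and inverse-match estimates work as the $n_i$ vary in magnitude. In particular, when some $n_i \ll n_1$ the short words must be peeled off cleanly (upper bound) and the independence-based cross-word inverse-match bounds must be meshed with the intra-word overlap analysis of Lemma~\ref{comb_estimate} (lower bound) without losing tightness in the main term.
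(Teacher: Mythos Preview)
Your approach is essentially correct, but it takes a genuinely different and much more labor-intensive route than the paper. You rerun the entire tripod construction and comb argument in a multi-word setting, with careful bookkeeping to handle $v_i$ of disparate lengths. The paper instead reduces everything to the $d=1$ case already proved. Its key tricks are:
\begin{enumerate}
\item \emph{Concatenation.} Given $v_1,v_2\in[F,F]$, choose a short word $z\in[F,F]$ (of length $6$) so that $v_1zv_2$ is reduced. The pushforward of the product of uniform measures lands in a positive-density subset of $F'_{n_1+n_2+6}$, so the Random Rigidity Theorem applies directly to $v_1zv_2$.
\item \emph{Bounded discrepancy.} One has $|\scl(v_1+v_2)-\scl(v_1zv_2)|\le\text{const}$, so $\scl(v_1+v_2)$ inherits the asymptotic from $\scl(v_1zv_2)$.
\item \emph{Convexity.} From $\scl(v_1+v_2)\approx\scl(v_1)+\scl(v_2)$ (at the correct scale) and the fact that $\scl$ is a norm, convexity forces the unit ball in the positive quadrant to be $C^0$ close to a simplex; the other orthants follow by replacing $v_i$ with $v_i^{-1}$.
\end{enumerate}
The paper's argument is therefore a few lines once Theorem~\ref{random_rigidity_theorem} is in hand, whereas yours requires re-establishing multi-word versions of Propositions~\ref{small_error} and~\ref{long_inverse_word_bound}, Lemma~\ref{tripod_boundary_small}, and the comb estimates. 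Your route does have the virtue of being self-contained and of yielding direct structural information about extremal fatgraphs for chains, and your handling of short $v_i$ and cross-word inverse matches is sound. But note that your density reduction to integer $t_i$ needs the observation that, for fixed real $t_i$, one can choose a single bounded common denominator $q$ (depending only on $\epsilon$, $d$, and $\max|t_i|$) so that the number of components $\sum p_i$ stays bounded; otherwise the implied constants in your comb argument would not be uniform. The paper's convexity step sidesteps this entirely.
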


We remark before giving the proof
that even though the $C^0$ geometry of a (random) slice of the unit ball is very
simple, the finer polyhedral structure is apparently extremely complicated. Figure~\ref{AbaBAbaBBAbaabaaBAAA_baaabAAbaabABAABBABa}
and Figure~\ref{simple} exhibit $2$ and $3$ dimensional slices of the $\scl$ unit ball of
some relatively simple words.

\begin{figure}[htpb]
\labellist
\small\hair 2pt
\endlabellist
\centering
\includegraphics[scale=0.25]{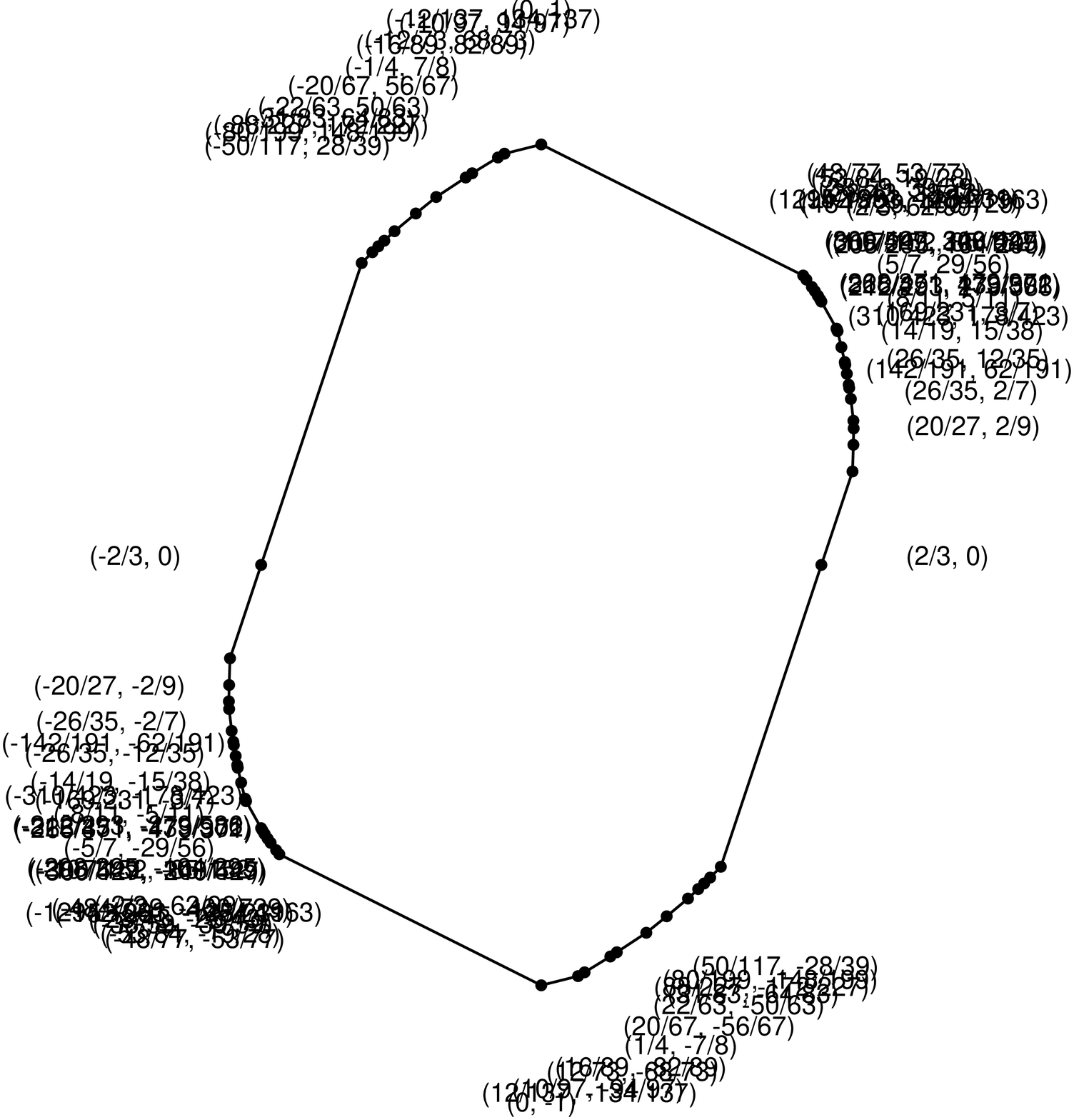}
\caption{Unit ball in the $\scl$ norm in the subspace spanned by $AbaBAbaBBAbaabaaBAAA$ and
$baaabAAbaabABAABBABa$}\label{AbaBAbaBBAbaabaaBAAA_baaabAAbaabABAABBABa}
\end{figure}

\begin{figure}[htpb]
\labellist
\small\hair 2pt
\endlabellist
\centering
\includegraphics[scale=1]{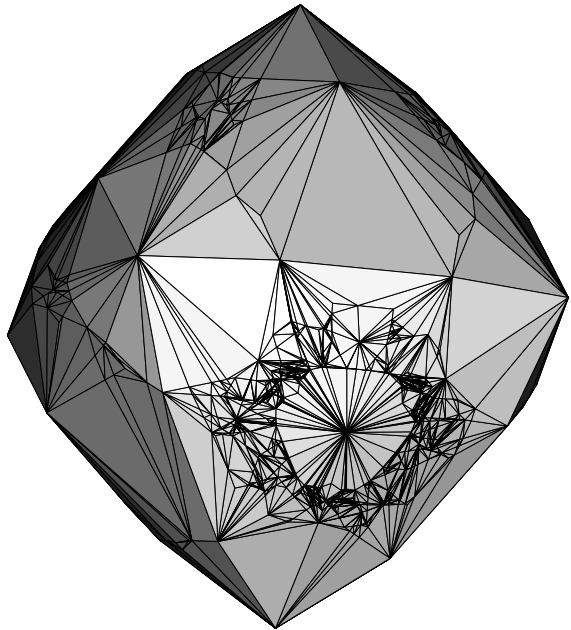}
\caption{Unit ball in the $\scl$ norm in the subspace spanned by $aabAcAcBCC$,
$bbcBaBaCAA$ and $ccaCbCbABB$}\label{simple}
\end{figure}

\begin{proof}
We give the proof in the case $d=2$; the general case follows by essentially the same argument.
For any pair of reduced words $v_1,v_2$ (not necessarily in $[F,F]$)
choose a word $z$ of length $6$ contained in $[F,F]$ 
so that $v_1zv_2$ is reduced. We can always find such a word $z$ of the form $xyXXYx$ for some generators 
$x,y$ so that $v_1$ does not end, and $v_2$ does not begin, with $X$.

This defines a map $F_{|v_1|}\times F_{|v_2|} \to F_{|v_1|+|v_2|+6}$, and the
pushforward of the product of uniform measures is proportional to the uniform measure on
the image, with constant of proportionality independent of $n$. The relative proportion
of the image is a constant, so by Theorem~\ref{random_rigidity_theorem} for any $\epsilon>0, C>1$
we have
$$|\scl(v_1zv_2) \log(n_1+n_2+6)/(n_1+n_2+6) - \log(2k-1)/6|\le \epsilon$$ 
with probability $1-O(n_1^{-C})$.
For $n_1\ge n_2$ large, $\log(n_1+n_2+6)$ is very close to $\log(n_1)$. On the other hand,
$|\scl(v_1+v_2) - \scl(v_1zv_2)| \le \text{const}$. It follows for any $\epsilon>0,C>1$,
with probability $1-O(n_1^{-C})$,
$$|\scl(v_1+v_2) - \scl(v_1) - \scl(v_2)| \le \epsilon n/\log(n)$$
In particular, the boundary of the unit ball contains a point which is very close to the
midpoint of the points $v_1/\scl(v_1)$ and $v_2/\scl(v_2)$, and by convexity, the unit
ball in the positive quadrant of the $v_1,v_2$ plane is $C^0$ close to a triangle.
Replacing $v_i$ by $v_i^{-1}$, the entire unit ball in the $v_1,v_2$ plane is $C^0$ close to
a diamond. The higher dimensional case is completely analogous.
\end{proof}

\section{Quasimorphism lower bound}\label{quasimorphism_section}

In this section we exhibit an explicit quasimorphism which certifies a uniform lower 
bound for $\scl$ of a random word.  Unfortunately, this lower bound is not sharp, 
for it exhibits only $\scl(v) \ge n\log(2k-1)/12\log(n)$ (with high 
probability), which is $1/2$ of the correct value, by Theorem~\ref{random_rigidity_theorem}.

Experience shows that constructing explicit extremal quasimorphisms is difficult. For
example, there is a polynomial time algorithm to produce an extremal surface 
for a chain in a free group, whereas 
there is no known algorithm (of any kind) to produce a certifying quasimorphism. 
Bj\"orklund-Hartnick \cite{Bjorklund_Hartnick} proved a central limit theorem for
quasimorphisms (on random walks; but these are very similar to random words in the
special case of free groups), and consequently any fixed quasimorphism on $F$ 
takes values of order $O(\sqrt{n})$
on words of length $n$. For this reason, it is interesting to be able to construct
an explicit quasimorphism which gives the correct $O(n/\log(n))$ order of magnitude.
Another nice feature of the construction is that the bound in probability is exponential in
$n$, in contrast to the polynomial bound in Proposition~\ref{lower_bound_proposition}.

\subsection{Quasimorphisms and Bavard Duality}

A reference for the material in this section is \cite{Calegari_scl}, especially
Chapter~2.

\begin{definition}
Let $G$ be a group a {\em quasimorphism} is a function for which there is
a least non-negative real number $D(\phi)$ (called the {\em defect}) for which
$$|\phi(gh)-\phi(g)-\phi(h)|\le D(\phi)$$
for all $g,h \in G$.

Furthermore, a quasimorphism is {\em homogeneous} if $\phi(g^n)=n\phi(g)$ for all
$g\in G$ and all integers $n$.
\end{definition}

If $\phi$ is any quasimorphism, the {\em homogenization} of $\phi$, denoted
$\overline{\phi}$, is defined by
$$\overline{\phi}(g):=\lim_{n \to \infty} \phi(g^n)/n$$
It is a fact that $\overline{\phi}$ is a homogeneous quasimorphism, and satisfies
$D(\overline{\phi})\le 2D(\phi)$. See \cite{Calegari_scl}, Lemma~2.58.
The set of homogeneous quasimorphisms on $G$ is a real vector space $Q(G)$. The
subspace with $D=0$ consists precisely of the homomorphisms $H^1(G;\R)$, and $D$
makes the quotient $Q/H^1$ into a {\em Banach space}.

There is a duality between quasimorphisms and stable commutator length, known as
{\em Generalized Bavard Duality}. The statement of this duality theorem is:
\begin{theorem}[Generalized Bavard Duality \cite{Calegari_scl}, Thm.~2.79]
Let $G$ be a group. Then for any $\sum t_i g_i \in B_1^H(G)$ there is an equality
$$\scl(\sum_i t_ig_i) = \frac 1 2 \sup_{\phi \in Q/H^1} \frac {\sum_i t_i \phi(g_i)} {D(\phi)}$$
\end{theorem}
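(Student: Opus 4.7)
The plan is to prove the two inequalities separately. The easy direction, $\scl(\sum t_i g_i) \ge \frac{1}{2} \sup_\phi \sum t_i \phi(g_i)/D(\phi)$, amounts to a Milnor--Wood style estimate. The hard direction uses Hahn--Banach together with the identification of certain bounded linear functionals on $B_1^H(G)$ with homogeneous quasimorphisms.

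For the easy direction I would start from the pointwise commutator bound $|\phi([x,y])| \le D(\phi)$, which for homogeneous $\phi$ follows from the defect inequality plus $\phi(g^{-1})=-\phi(g)$. By induction on the number of commutators, if $g \in [G,G]$ is a product of $n$ commutators then $|\phi(g)| \le (2n-1)D(\phi)$. Applying this to $g^k$ and using homogeneity $\phi(g^k)=k\phi(g)$ gives $|\phi(g)| \le 2\,\scl(g)\,D(\phi)$. For a chain $c = \sum t_i g_i \in B_1^H(G)$, I would use the surface characterization in Lemma~\ref{surface_lemma}: pick a monotone admissible surface $S$ realizing $\scl(c)$ up to $\epsilon$, and estimate the $\phi$-values of the boundary components in terms of the handle decomposition of $S$. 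Using that $\phi$ is conjugation-invariant and homogeneous, so descends to a well-defined linear functional on $B_1^H(G)$, the surface estimate yields $|\sum t_i \phi(g_i)| \le (-\chi^-(S)/n(S))\,D(\phi) + O(\epsilon)$, giving the desired bound after taking the infimum.

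For the hard direction I would apply Hahn--Banach directly to $\scl$ viewed as a seminorm on $B_1^H(G)$. Given $c$ with $\scl(c) = s > 0$, define $L_0(rc) := 2rs$ on the line $\R c$, which satisfies $|L_0| \le 2\,\scl$ there. Extend to a linear functional $L$ on all of $B_1^H(G)$ with $|L(x)| \le 2\,\scl(x)$ everywhere. The nontrivial step is then to promote $L$ to a homogeneous quasimorphism $\phi$ on $G$ with $D(\phi) \le 1$ and $\phi(g) = L(g)$ for $g \in [G,G]$. The route I would take is via bounded cohomology: lift $L$ to the real $1$-chain space $C_1(G)$ (extending arbitrarily off $B_1$), compose with the bar boundary $\partial \colon C_2(G) \to B_1(G)$ to produce an inhomogeneous $2$-cocycle $L \circ \partial$ on $G$, and show that the surface bound forces this $2$-cocycle to be bounded. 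Since $L$ vanishes on the relations $g - hgh^{-1}$ and $g^n - ng$ by construction, the resulting bounded class lies in the kernel of $H_b^2(G;\R) \to H^2(G;\R)$, hence via Brooks/Gersten corresponds to an element of $Q(G)/H^1(G;\R)$; any homogeneous representative $\phi$ of this class satisfies $\phi(c) = L(c) = 2s$, so $\phi(c)/D(\phi) \ge 2s$.

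The main obstacle is precisely this last step: showing that every linear functional on $B_1^H(G)$ bounded by $\scl$ is represented by a homogeneous quasimorphism with defect matched to the operator norm. Two technical points require care. First, checking that $L \circ \partial$ really is bounded as a cochain on $G \times G$: one must feed specific $2$-simplices through the surface bound from the easy direction, identifying the relevant surface with a single commutator (which has $\scl \le 1/2$) to pick up the factor of $2$ that matches the $\frac{1}{2}$ in the statement. Second, ensuring that the resulting quasimorphism, once homogenized, still evaluates correctly on the original chain $c$; this uses that conjugation-invariance and homogeneity of $\overline{\phi}$ exactly match the relations quotiented out in $B_1^H(G)$, so that $\overline\phi$ and $L$ agree as functionals there. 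The case $\scl(c) = 0$ is handled as a limit, approximating by nearby chains with positive $\scl$, or more cleanly by noting that the supremum side is then automatically zero by the easy direction applied to $c$ itself.
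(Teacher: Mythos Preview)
The paper does not prove this theorem at all; it is quoted from \cite{Calegari_scl} (Theorem~2.79) as background for \S~\ref{quasimorphism_section}, with only the remark that Bavard \cite{Bavard} established a special case. So there is no proof in the paper to compare against.

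That said, your outline is the standard argument and is essentially correct: the easy inequality via the commutator/surface estimate, and the hard inequality via Hahn--Banach followed by the identification of $\ker(H^2_b \to H^2)$ with $Q/H^1$. Two small points. First, the boundedness of $L\circ\partial$ comes from applying the Hahn--Banach bound $|L|\le 2\,\scl$ (not the ``easy direction'') to the bar boundary $\partial(g,h)=g+h-gh$; the relevant surface is a three-holed sphere (pair of pants), not a commutator, though both give $-\chi=1$ and hence $\scl\le 1/2$. Second, and more substantively, the constants need care: the naive route gives $D(\psi)\le 1$ for the raw functional $\psi=L|_G$, but homogenizing can double the defect, which would lose a factor of $2$ and fail to match the $\tfrac{1}{2}$ in the statement. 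The fix (as in \cite{Calegari_scl}) is either to antisymmetrize $\psi$ first and use a sharper homogenization bound, or to work directly with the Gersten filling norm on $C^1_b/\delta C^0_b$ and its dual pairing with $B_1^H$, where the constants come out exactly. Your sketch flags this as a ``technical point requiring care'' without resolving it; that is where the genuine work in the sharp form of the theorem lies.
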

A special case of this theorem was established by Bavard in \cite{Bavard}.
Notice that this theorem is ``complementary'' to Lemma~\ref{surface_lemma}: an admissible
surface certifies an upper bound for $\scl$, whereas a homogeneous quasimorphism certifies
a lower bound.

An important and useful class of quasimorphisms are the (big) counting quasimorphisms, defined
by Rhemtulla \cite{Rhemtulla}, and rediscovered by Brooks \cite{Brooks}.
Recall the definition of the counting functions $C_\sigma$ from \S~\ref{counting_function_subsection}.
and their antisymmetrization $H_\sigma:=C_\sigma-C_{\sigma^{-1}}$. 
Given a set of reduced words $S\subset F$, 
the function $H_S:=\sum_{\sigma \in S} H_\sigma$ is a quasimorphism, and its value on
$v$ counts the difference in the number of copies of $\sigma$ and of $\sigma^{-1}$ for
each $\sigma \in S$. The homogenization counts the difference of the number of copies in the
(cyclically reduced) {\em cyclic} word $v$.

While big counting quasimorphisms are intuitively very natural, it will be technically easier 
for us to work with {\em small} counting quasimorphisms.  As above, let $S \subset F$, and define 
\[
 c_S = \text{maximal number of disjoint copies of elements of }S\text{ in }v.
\]
Then $h_S = c_S - c_{S^{-1}}$ is a quasimorphism, the small counting quasimorphism on $S$.  See 
e.g.~\cite{Calegari_scl}~\S~2.3.2. 
In contrast to big counting quasimorphisms, for which bounding the defect proves difficult, 
small counting quasimorphisms have a uniformly bounded defect. 
\begin{lemma}
\label{small_counting_defect}
For any $S \subseteq F$, we have $D(h_S) \le 3$ and $D(\overline{h}_S)\le 6$.
\end{lemma}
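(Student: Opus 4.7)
The plan is to bound $|h_S(gh) - h_S(g) - h_S(h)|$ uniformly over $g, h \in F$ by a careful combinatorial analysis of how maximal disjoint counts behave under concatenation in a free group. First I would establish the following basic fact as a preparatory lemma: for any subset $T \subseteq F$ and any reduced concatenation $w = w_1 w_2$ (i.e.\ the last letter of $w_1$ is not the inverse of the first letter of $w_2$), one has
\[
c_T(w_1) + c_T(w_2) \le c_T(w_1 w_2) \le c_T(w_1) + c_T(w_2) + 1.
\]
The lower bound is immediate because the union of maximal disjoint $T$-collections in $w_1$ and in $w_2$ is still disjoint in the concatenation. For the upper bound, one partitions any maximal disjoint $T$-collection in $w_1 w_2$ into subwords lying entirely in $w_1$, entirely in $w_2$, or straddling the junction; at most one can straddle.

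Next I would decompose the reduced forms of $g, h$ as $g = au$ and $h = u^{-1}b$, where $u$ is the cancellation, so that $gh = ab$ is the reduced form of the product. By construction each of $g = au$, $h = u^{-1}b$, and $gh = ab$ is a reduced concatenation, so the basic fact applies. Combined with the elementary identity $c_S(u^{-1}) = c_{S^{-1}}(u)$ (coming from the position-reversing bijection between subwords of $u^{-1}$ and of $u$), a short calculation shows that
\[
A := c_S(gh) - c_S(g) - c_S(h) \in \bigl[-c_S(u) - c_{S^{-1}}(u) - 2,\ -c_S(u) - c_{S^{-1}}(u) + 1\bigr],
\]
an interval of length $3$. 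Running the identical argument with $S$ replaced by $S^{-1}$ shows that $B := c_{S^{-1}}(gh) - c_{S^{-1}}(g) - c_{S^{-1}}(h)$ lies in the \emph{same} interval. Since $h_S(gh) - h_S(g) - h_S(h) = A - B$, I conclude $|h_S(gh) - h_S(g) - h_S(h)| \le 3$, giving $D(h_S) \le 3$. The bound $D(\overline{h}_S) \le 6$ then follows from the general fact $D(\overline{\phi}) \le 2 D(\phi)$ for any quasimorphism $\phi$ (\cite{Calegari_scl}, Lemma 2.58).

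The main work is in the bookkeeping for the interval containing $A$: one must combine the upper bound on $c_S(gh)$ with the lower bounds on $c_S(g)$ and $c_S(h)$ (and conversely for the other endpoint) to extract the sharp bracket. The real conceptual content is that the nuisance quantities $c_S(u)$ and $c_{S^{-1}}(u)$ enter symmetrically into both $A$ and $B$ and therefore cancel exactly in the difference $A - B$. This symmetry is precisely what makes the defect uniformly bounded, independently of the choice of $S$, and explains why it is technically easier to work with small counting quasimorphisms than with big ones, where no such cancellation occurs.
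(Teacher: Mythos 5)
Your argument is correct. Note that the paper does not actually prove this lemma: it simply cites Lemma~5.1 of \cite{Calegari_isometry}, so you are supplying a self-contained proof where the authors defer to a reference; the argument you give (the tripod decomposition $g=au$, $h=u^{-1}b$, $gh=ab$, together with the sub/superadditivity $c_T(w_1)+c_T(w_2)\le c_T(w_1w_2)\le c_T(w_1)+c_T(w_2)+1$ for reduced concatenations and the identity $c_S(u^{-1})=c_{S^{-1}}(u)$) is exactly the standard proof from that reference, and your bookkeeping is right: both $A$ and $B$ land in the same interval of length $3$, so $|A-B|\le 3$, and $D(\overline{h}_S)\le 2D(h_S)$ finishes the job. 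The only pedantic caveat worth a sentence is to exclude the identity from $S$ (or declare that it is never counted), since ``disjoint copies of the empty word'' is not well defined; this is implicit in both the paper and the cited lemma.
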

\begin{proof}
This is Lemma~5.1 from \cite{Calegari_isometry}.
\end{proof}

\subsection{Construction of the quasimorphism}

\begin{proposition}\label{quasimorphism_lower_bound}
Let $v$ be a random reduced word in the commutator subgroup of length $n$. Then
there is an explicit construction of a homogeneous quasimorphism, so that 
for all $\epsilon>0$ there are constants $C>1$ and $c>0$ such that with 
probability $1-O(C^{-n^c})$, the quasimorphism certifies the inequality
$$\scl(v) \ge \frac{1}{1+\epsilon} \frac{n\log(2k-1)}{12\log(n)}$$
\end{proposition}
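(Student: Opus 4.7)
The plan is to apply Generalized Bavard Duality with an explicit small counting quasimorphism whose counting set is drawn from the subwords of $v$ itself. Fix $L = 1 + \delta$ for a small $\delta > 0$ to be chosen in terms of $\epsilon$. By Proposition~\ref{long_inverse_word_bound} applied at this $L > 1$, with probability $1 - O(C^{-n^c})$ the set $S'$ of subwords of $v$ of length $Lm$ whose inverse does \emph{not} appear anywhere in $v$ has cardinality at least $n - o(n/\log n)$ (using also Proposition~\ref{large_cardinality} to bound $n - \mathrm{card}(S)$). Theorem~\ref{Sharp_theorem} says the conditioning on $v \in [F,F]$ only dilutes this estimate by a factor $O(n^{k/2})$, which is absorbed into the exponential bound.

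Form the small counting quasimorphism $h_{S'}$ and its homogenization $\overline{h}_{S'}$. By Lemma~\ref{small_counting_defect}, $D(\overline{h}_{S'}) \le 6$. Next I would bound $\overline{h}_{S'}(v)$ from below via the cyclic packing count: since no element of $(S')^{-1}$ occurs in $v$ (and hence none in any cyclic power $v^N$), we have
$$\overline{h}_{S'}(v) \;=\; \lim_{N \to \infty} \frac{c_{S'}(v^N) - c_{(S')^{-1}}(v^N)}{N} \;\ge\; c^{\mathrm{cyc}}_{S'}(v),$$
where $c^{\mathrm{cyc}}_{S'}(v)$ denotes the maximum number of pairwise disjoint copies of elements of $S'$ in the cyclic word $v$. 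Greedily cutting the cyclic word $v$ into $\lfloor n/Lm \rfloor$ consecutive blocks of length $Lm$ exhibits that many length-$Lm$ subwords of $v$; all but $o(n/\log n)$ of them lie in $S'$ by the cardinality bound on $S'$, so $c^{\mathrm{cyc}}_{S'}(v) \ge n/(Lm) - o(n/\log n)$.

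Plugging into Bavard duality and recalling $m = \log(n)/\log(2k-1)$,
$$\scl(v) \;\ge\; \frac{\overline{h}_{S'}(v)}{2\, D(\overline{h}_{S'})} \;\ge\; \frac{1}{12}\!\left(\frac{n}{Lm} - o\!\left(\frac{n}{\log n}\right)\right) \;=\; \frac{1}{1+\delta}\cdot\frac{n\log(2k-1)}{12\log(n)}\,(1-o(1)).$$
Choosing $\delta$ so that $(1+\delta)/(1-o(1)) \le 1+\epsilon$ for all $n$ sufficiently large yields the claim.

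The main obstacle is the passage from the (homogenized) small counting quasimorphism to a usable lower bound in terms of the cyclic packing count. One needs $h_{S'}(v^N) \ge N\cdot c^{\mathrm{cyc}}_{S'}(v) - O(Lm)$, the $O(Lm)$ error coming from disjoint $S'$-copies that would cross a period boundary; dividing by $N$ and letting $N \to \infty$ then gives the cyclic count. It is essential here that $S'$ was chosen to be inverse-free in $v$, so that the subtracted term $c_{(S')^{-1}}$ contributes nothing and we do not lose half of the count to cancellation. Everything else is routine given the probabilistic machinery of Section~\ref{random_word_section}.
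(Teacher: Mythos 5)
Your proof is correct and takes essentially the same route as the paper: fix $L=1+\epsilon$, use Proposition~\ref{long_inverse_word_bound} (with Sharp's theorem to handle the conditioning) to extract an inverse-free set $S'$ of length-$Lm$ subwords, lower-bound the homogenized small counting quasimorphism $\overline{h}_{S'}(v)$ by the packing count $n/(Lm)-o(n/\log n)$ coming from the partition of the cyclic word into consecutive blocks, and finish with Lemma~\ref{small_counting_defect} and Bavard duality. The only cosmetic difference is that the paper takes $S'$ to be a subset of the partition blocks themselves rather than of all length-$Lm$ subwords, which makes the packing bound immediate and renders $c_{(S')^{-1}}(v)=0$ by construction.
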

\begin{proof}
Recall our notation $m=\log(n)/\log(2k-1)$ where $k$ is the rank of the free group $F$.
Fix $L=1+\epsilon$ for $\epsilon>0$, and partition the cyclic word $v$ into adjacent 
disjoint subwords of length $Lm$.  Note that there may be some small remainder if $Lm$ 
does not divide $n$; ignore this gap, as it will be insignificant for our purposes.  
Let $S$ be the collection of these subwords. 
\begin{lemma}\label{quasi_set_construction_lemma}
For $L=1+\epsilon$ and $S$ as above, there exist $C>1$ and $c>0$ such that with 
probability $1-O(C^{-n^c})$, there is a subset $S'\subset S$ with 
\[
 \text{card}(S-S') < n^{2-L+\epsilon/2}
\]
such that for no $\sigma \in S'$ does $\sigma^{-1}$ appear in $v$.
\end{lemma}
\begin{proof}
Repeating the content of \S~\ref{L>1subsection} while assuming that the words in 
$S$ are disjoint only simplifies the arguments, so Proposition~\ref{long_inverse_word_bound} 
still holds in this case.
\end{proof}

The certifying quasimorphism will be $\overline{h}_{S'}$.  By construction, 
\[
 \overline{h}_{S'}(v) \ge \frac{n}{Lm} - n^{2-L+\epsilon/2} - 1 - c_{(S')^{-1}}(v),
\]
and $c_{(S')^{-1}}(v) = 0$ by Lemma~\ref{quasi_set_construction_lemma}.  
By Lemma~\ref{small_counting_defect}, $D(\overline{h}_{S'}) \le 6$, so Bavard duality gives 
\[
\scl(v) \ge \frac{\overline{h}_{S'}(v)}{2D(\overline{h}_{S'})} 
        \ge \frac{1}{1+\epsilon} \frac{n\log(2k-1)}{12\log(n)} - o(n/\log(n)).
\]
The statement of the lemma is obtained by repeating the argument with $\epsilon/2$; the 
multiplicative factor $1/(1+\epsilon)$ then renders the $o(n/\log(n))$ unnecessary.
\end{proof}

\section{Computer experiments and a surprisingly good heuristic}\label{heuristic_section}

Recall that in the proof of Proposition~\ref{upper_bound_proposition} we constructed a surface 
by gluing random tripods. The length of the edges of the tripods was $m/2 = \log(n)/2\log(2k-1)$, 
but each edge of each tripod was extended to a maximal joint before gluing. 
If $u=xy$ and $u'=xy'$ are reduced words with a common nonempty
prefix $x$, the expected length of the common prefix of $y$ is
$1/(2k-1) + 1/(2k-1)^2 + \cdots = 1/(2k-2)$. This suggests that the average edge length
of an extremal surface should be at least $m/2 + 1/(2k-2)$, and therefore that the
value of $\scl$ should be at most $n/12 (\log(n)/2\log(2k-1) + 1/(2k-2))^{-1}$.

Without a really sound theoretical justification, we nevertheless made the prediction that
this heuristic correction should more accurately match the actual average value of $\scl$,
and tested this experimentally.

Figure~\ref{experiment_figure} displays the result of computer experiment. We computed
the $\scl$ of $20$ random words in $[F_2,F_2]$ of lengths  
between $70$ and $240$ (inclusive) in steps of $10$. The upper solid line indicates the
theoretical value $n\log(2k-1)/6\log(n)$ from Theorem~\ref{random_rigidity_theorem}, 
the dots are the actual averages, and the lower dashed line (passing in a very satisfying way
through the experimental dots!) is the heuristic $n/12 (\log(n)/2\log(2k-1) + 1/(2k-2))^{-1}$.

\begin{figure}[htpb]
\labellist
\small\hair 2pt
\pinlabel $70$ at 140 0
\pinlabel $100$ at 200 0
\pinlabel $130$ at 260 0
\pinlabel $160$ at 320 0
\pinlabel $190$ at 380 0
\pinlabel $220$ at 440 0
\pinlabel $1$ at 120 20
\pinlabel $3$ at 120 60
\pinlabel $5$ at 120 100
\pinlabel $7$ at 120 140
\pinlabel $\scl$ at 100 80
\pinlabel $\text{word length}$ at 300 -20
\endlabellist
\centering
\includegraphics[scale=0.75]{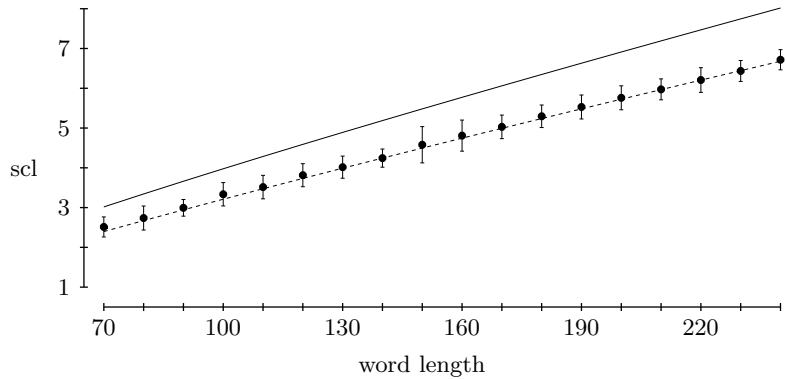}
\caption{Experimental computation of $\scl$ on random words in $F_2$
of length between $70$ and $240$, and comparison with
(asymptotic) theoretical and heuristic values.}\label{experiment_figure}
\end{figure}

\appendix

\section{Directed graphs and Markov chains}\label{Markov_chain_section}

The purpose of this appendix is firstly to put the estimates obtained in 
\S~\ref{random_word_section} into the more general context of the theory of
nonreversible Markov chains, and secondly to indicate which
aspects of the theory developed above can be expected to generalize easily to 
hyperbolic groups and spaces, and which aspects require new ideas.
The main results of the paper do not depend logically
on the results or conjectures in this appendix.

\bigskip

Let $X_1$ be the directed graph whose vertices are the generators of $F$, and whose
(directed) edges are the (ordered) non-inverse pairs.
A random word $v$ of length $n$ can be interpreted as a random walk on $X$ (where
edges have the uniform probability) starting at a random vertex (also with the
uniform probability). This graph is {\em ergodic} (i.e.\/ there is a directed path
from any vertex to any other vertex) and {\em aperiodic} 
(i.e.\/ the gcd of the lengths of the directed loops is $1$).

For any $i$ let $X_i$ be the directed graph whose vertices are the elements of
$F_i$, and whose (directed) edges are the elements of $F_{i+1}$, where an
edge $g$ starts/ends at its prefix/suffix respectively of length $i$.
Note for each $i$ that $X_i$ is $(2k-1)$-regular, ergodic and aperiodic. 
Again, a random word $v$ of length $n$ can be interpreted as a random walk on $X$
of length $n-i+1$ starting at a random vertex.

Each $X_i$ determines a nonreversible Markov chain (in the obvious way), with 
stationary probability $\pi$ the uniform probability measure on vertices (i.e.\/ such
that each vertex has weight $1/(2k)(2k-1)^{i-1}$), and Markov kernel $P_i(x,y)=1/(2k-1)$ if
there is a directed edge from $x$ to $y$; i.e.\/ if $x$ and $y$ are reduced words of length $i$, and
the suffix of $x$ of length $i-1$ is equal to the prefix of $y$ of length $i-1$.

For an introduction to the theory of Markov chains, see \cite{Freedman}. We remark
that we use only the most elementary aspects of the theory in this paper, since our
Markov chains always have discrete time and finite state space.

\subsection{Chernoff inequalities for nonreversible Markov chains}

We would like to estimate the rate of
convergence of random sums to the equilibrium; that is, we want to estimate the
probability that $|n^{-1}\sum_{j=1}^n f(x_j) - \int fd\pi|$ is bigger than $n^{-\delta}$,
for some function $f$ on the vertices of $X_i$ (i.e.\/ on $F_i$). In the sequel
we denote $\int fd\pi$ by $\mean_\pi(f)$, or just $\mean(f)$ if $\pi$ is understood.

As is well-known, for {\em reversible} Markov chains, the rate of convergence is governed
by the spectral gap (i.e.\/ the difference between $1$ and the second largest eigenvalue)
of the (symmetric) Markov kernel $P$. For {\em nonreversible} Markov chains, the relevant
quantity is the smallest nonzero eigenvalue $\lambda_1$ of $L:=\Id -(P+P^*)/2$.
In general $P^*$ is defined by $P^*(x,y)=\pi(y)P(y,x)/\pi(x)$, so in our context $P^*$ is
just the transpose $P^T$.

Let $f$ be normalized to have $\|f-\mean(f)\|_\infty\le 1$ and $\|f-\mean(f)\|_2^2 \le 1$.
Let $q$ be an initial distribution, and define $N_q=\|q/\pi\|_2$ (note that we always have
$N_q \le \min(\pi(v))^{-1/2}$).
Then the main Chernoff-type inequality, due to Lezaud, is as follows:

\begin{theorem}[Lezaud \cite{Lezaud}, Thm.~1.1 (cf. Rmk.~1.3)]\label{Lezaud_estimate}
With notation as above, there is an inequality
$$\Pr\Bigl( n^{-1}\sum_{j=1}^n f(x_j) - \mean(f) \ge \gamma\Bigr) \le N_q e^{-\lambda_1 n \gamma^2/8}$$
\end{theorem}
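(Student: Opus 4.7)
The plan is to follow the classical Chernoff exponential-moment strategy, adapted to the nonreversible Markov setting by a spectral perturbation argument. First I would apply the exponential Markov inequality: for any parameter $s>0$ and any $\gamma>0$,
$$\Pr\Bigl(\sum_{j=1}^n (f(x_j)-\mean(f)) \ge n\gamma\Bigr) \le e^{-sn\gamma}\,\Ex_q\Bigl[\exp\Bigl(s\sum_{j=1}^n \tilde f(x_j)\Bigr)\Bigr],$$
where $\tilde f = f-\mean(f)$. The task thus reduces to bounding the moment generating function on the right by something of the form $N_q\,\rho(s)^n$ for an explicit, suitably small $\rho(s)$.

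Next I would realize the MGF as an iterate of a tilted Markov operator. Let $D_s$ be the diagonal operator on $\ell^2(\pi)$ with entries $e^{s\tilde f(x)}$, and set $P_s := PD_s$. Unwinding the Markov property gives
$$\Ex_q\Bigl[\exp\Bigl(s\sum_j \tilde f(x_j)\Bigr)\Bigr] = \langle q,\, P_s^{\,n}\mathbf{1}\rangle,$$
and Cauchy--Schwarz in $\ell^2(\pi)$, combined with the definition $N_q=\|q/\pi\|_2$, yields
$$\Ex_q[\,\cdots\,] \le N_q\,\|P_s\|_\pi^{\,n},$$
where $\|\cdot\|_\pi$ is the $\ell^2(\pi)$-operator norm. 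Everything therefore comes down to controlling $\|P_s\|_\pi$ for small $s$.

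The heart of the argument, and the step I expect to be the main obstacle, is a quantitative perturbation estimate of the form
$$\log\|P_s\|_\pi \le \frac{s^2}{2\lambda_1} + O(s^3),$$
where $\lambda_1$ is the spectral gap of the additive symmetrization $L=\Id-(P+P^*)/2$. Since $P_s$ is non-self-adjoint, the spectral theorem does not apply directly; instead I would work with the self-adjoint operator $\tfrac{1}{2}(P_s+P_s^*)$, whose quadratic form is (up to first order in $s$) the Dirichlet form of $L$ plus a multiplication perturbation coming from $D_s$. Using the normalizations $\|\tilde f\|_\infty\le 1$ and $\|\tilde f-\mean(f)\|_{2,\pi}^2\le 1$, one Taylor-expands the top eigenvalue via Kato's analytic perturbation theory and reads off the quadratic coefficient $1/(2\lambda_1)$. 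The delicate bookkeeping that transfers the resulting bound from the symmetric operator back to $\|P_s\|_\pi$ is where most of Lezaud's technical work lives, and is the main obstacle to a short proof.

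Finally, assembling the three ingredients,
$$\Pr\Bigl(\cdots\Bigr) \le N_q\,\exp\Bigl(-sn\gamma + \tfrac{ns^2}{2\lambda_1} + O(ns^3)\Bigr),$$
and optimizing in $s$ by choosing $s=\lambda_1\gamma/2$ (within the range where the cubic remainder is dominated by the quadratic term, which is guaranteed by the normalization of $f$) delivers the stated bound $N_q\,e^{-\lambda_1 n\gamma^2/8}$; the constant $8$ in the denominator of the exponent comfortably absorbs the cubic error term.
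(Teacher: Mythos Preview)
The paper does not prove this theorem at all: it is simply quoted from Lezaud \cite{Lezaud} (Thm.~1.1, Rmk.~1.3) as an external black-box input, and no proof or even sketch is supplied. So there is nothing in the paper to compare your proposal against.

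That said, your outline is broadly faithful to Lezaud's actual argument --- exponential Chebyshev, realization of the moment generating function as an iterate of a tilted kernel, Kato perturbation of the top eigenvalue, and optimization in $s$. One technical point worth flagging: the step where you pass from $\langle q, P_s^n \mathbf{1}\rangle$ to $N_q\|P_s\|_\pi^n$ and then do perturbation theory on the \emph{operator norm} $\|P_s\|_\pi$ is not quite how Lezaud proceeds, and for nonreversible chains this route is problematic (the operator norm of $P$ on $\ell^2(\pi)$ need not have a gap below $1$, even when $\lambda_1>0$). Lezaud instead perturbs the isolated principal eigenvalue $\rho(s)$ of $P_s$ and controls the growth of $P_s^n$ through $\rho(s)^n$ via the associated spectral projection; the additive symmetrization and its gap $\lambda_1$ enter only when bounding the second-order term in the expansion of $\rho(s)$. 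Your sketch conflates these two routes, but the overall shape is right.
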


\begin{remark}
Replacing $f$ by $-f$ gives the same bound on 
$\Pr(n^{-1}\sum_{j=1}^n f(x_j) -\mean(f) \le -\gamma)$.
\end{remark}

\begin{remark}
It is possible to control the rate of convergence in terms of other kinds of
spectral data, for instance, the second smallest eigenvalue $\lambda_1$ 
of $\Id - PP^*$. However for the Markov chains $X_i$ as above with $i\ge 2$, 
the multiplicative reversibilization $PP^*$ has many distinct eigenvectors 
of eigenvalue $1$, so $\lambda_1=0$. Another approach is to work directly with the
smallest positive {\em singular value} of the (nonsymmetric) matrix $\Id -P$; this
approach is favored by Dinwoodie \cite{Dinwoodie}.
\end{remark}

\begin{remark}
Lezaud's estimate is not in itself strong enough to derive Proposition~\ref{small_error}
because the variance of a counting function $C_{\sigma}$ is too big. Nevertheless,
our proof of Proposition~\ref{small_error} owes something to the approach of Lezaud, 
and also to the earlier work of Dinwoodie \cite{Dinwoodie} mentioned above (especially
the implicit estimate of the random covering time in Lemma~\ref{decay_of_correlation}).
\end{remark}

\subsection{Estimating $\lambda_1$}

The following estimate on $\lambda_1$ in terms of the spectrum of $P$ is obtained by
Chung:
\begin{theorem}[Chung \cite{Chung}, Thm.~4.3]\label{Chung_estimate}
If $X$ is a directed graph, the eigenvalue $\lambda_1$ of $L$ is related to the
(ordered) eigenvalues $\rho_i$ of $P$ as follows:
$$\min_{i\ne 0} (1-|\rho_i|) \le \lambda_1 \le \min_{i\ne 0} (1-\text{Re}(\rho_i))$$
\end{theorem}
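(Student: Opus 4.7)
My plan is to work in the complex Hilbert space on the vertex set of $X$ equipped with the $\pi$-weighted Hermitian inner product $\langle f,g\rangle_\pi=\sum_x\pi(x)f(x)\overline{g(x)}$. The definition of $P^*$ given in the appendix makes it the genuine adjoint of $P$ in this inner product, so $L=\Id-(P+P^*)/2$ is self-adjoint with real spectrum. Stationarity of $\pi$ yields $P\mathbf{1}=\mathbf{1}=P^*\mathbf{1}$, so the decomposition $\mathbb{C}\mathbf{1}\oplus\mathbf{1}^{\perp_\pi}$ is invariant under $P$, $P^*$ and $L$; since $L\mathbf{1}=0$, Courant--Fischer identifies $\lambda_1=1-\mu_{\max}$, where
$$\mu_{\max}=\sup_{f\in\mathbf{1}^{\perp_\pi},\,\|f\|_\pi=1}\text{Re}\,\langle Pf,f\rangle_\pi$$
is the largest eigenvalue of the symmetric part $(P+P^*)/2$ restricted to $\mathbf{1}^{\perp_\pi}$.

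For the upper bound I use a right eigenvector of $P$ as a Rayleigh test vector. Given $Pv_i=\rho_i v_i$ with $\rho_i\ne 1$, the identity $\rho_i\langle v_i,\mathbf{1}\rangle_\pi=\langle Pv_i,\mathbf{1}\rangle_\pi=\langle v_i,P^*\mathbf{1}\rangle_\pi=\langle v_i,\mathbf{1}\rangle_\pi$ forces $v_i\in\mathbf{1}^{\perp_\pi}$. Since $P^*$ is the adjoint of $P$, one has $\langle P^*v_i,v_i\rangle_\pi=\langle v_i,Pv_i\rangle_\pi=\overline{\rho_i}\|v_i\|_\pi^2$, so
$$\frac{\langle Lv_i,v_i\rangle_\pi}{\|v_i\|_\pi^2}=1-\tfrac{1}{2}(\rho_i+\overline{\rho_i})=1-\text{Re}\,\rho_i,$$
and the Rayleigh principle gives $\lambda_1\le 1-\text{Re}\,\rho_i$ for every $i\ne 0$; taking the minimum yields the right-hand inequality.

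The lower bound $\mu_{\max}\le\max_{i\ne 0}|\rho_i|$ is where I expect the real difficulty. If $P|_{\mathbf{1}^{\perp_\pi}}$ were normal---as it is precisely in the reversible case---the numerical range would equal the convex hull of the spectrum and the inequality would be immediate. For a truly nonreversible chain $P$ is non-normal, and the numerical radius of a non-normal operator can in principle strictly exceed its spectral radius, so one must exploit the specifically Markov structure of $P$. The tools I would marshal are the one-step $\pi$-contractivity $\|Pf\|_\pi\le\|f\|_\pi$ (Jensen applied rowwise) together with the spectral radius formula $\lim_n\|P^n|_{\mathbf{1}^{\perp_\pi}}\|_\pi^{1/n}=\max_{i\ne 0}|\rho_i|$. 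Writing $A=(P+P^*)/2$ and using the self-adjointness of $A$ to turn $\mu_{\max}^n$ into an operator norm of $A^n|_{\mathbf{1}^{\perp_\pi}}$, I would compare this symmetrized norm with the asymmetric operator norms $\|P^n\|_\pi$ and $\|(P^*)^n\|_\pi$ via a Ky Fan/log-majorization type estimate, and then take $n$th roots to absorb any polynomial-in-$n$ Jordan-block prefactors. The technical crux---and the main obstacle---is the comparison step: a naive binomial expansion of $A^n$ in the non-commuting variables $P$ and $P^*$ is too lossy, and I expect to need a polar-decomposition analysis $P=U|P|$ to control the Hermitian part $(U|P|+|P|U^*)/2$ directly.
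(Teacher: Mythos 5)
The paper offers no proof of this statement at all --- it is imported verbatim from Chung's paper --- so there is no internal argument to measure yours against. Taken on its own terms, your upper-bound half is correct and complete: $P^*$ as defined is indeed the adjoint of $P$ for $\langle\cdot,\cdot\rangle_\pi$, an eigenvector for $\rho_i\ne 1$ is forced into $\mathbf{1}^{\perp_\pi}$ exactly as you argue, and the Rayleigh quotient of $L$ at that eigenvector is $1-\text{Re}\,\rho_i$, giving $\lambda_1\le\min_{i\ne 0}(1-\text{Re}\,\rho_i)$ (modulo the standing assumption, satisfied by the ergodic chains $X_i$ in the paper, that $0$ is a simple eigenvalue of $L$ so that $\lambda_1$ is the minimum of the Rayleigh quotient over $\mathbf{1}^{\perp_\pi}$).

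The lower bound, however, is a genuine gap, and the obstacle you flag is not one that sharper operator inequalities will remove: the inequality your strategy targets --- that $\sup_{f\in\mathbf{1}^{\perp_\pi}}\text{Re}\,\langle Pf,f\rangle_\pi/\|f\|_\pi^2$ is at most the spectral radius of $P$ on $\mathbf{1}^{\perp_\pi}$ --- is false for stochastic matrices in general. Take $u=(1,-1,0)^T$, $v=(1,1,-2)^T$, let $J$ be the all-ones $3\times 3$ matrix, and set $P=\frac{1}{3}J+\epsilon\, uv^T$ for $0<\epsilon<1/6$. This is doubly stochastic with all entries positive, so $\pi$ is uniform and $P^*=P^T$. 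Since $\mathbf{1}^Tu=v^T\mathbf{1}=v^Tu=0$, one checks $P^2=\frac{1}{3}J$ and $\text{tr}(P)=1$, so the spectrum of $P$ is $\{1,0,0\}$ and $\min_{i\ne 0}(1-|\rho_i|)=1$; but $(P+P^T)/2=\frac{1}{3}J+\frac{\epsilon}{2}(uv^T+vu^T)$ has eigenvalues $1$, $\epsilon\sqrt{3}$, $-\epsilon\sqrt{3}$, whence $\lambda_1=1-\epsilon\sqrt{3}<1$. Every ingredient you propose to use (one-step $\pi$-contractivity, the spectral radius formula, comparison of $\|A^n\|$ with $\|P^n\|$ via majorization, polar decomposition) is available for this $P$, so no argument built from those ingredients alone can establish the stated lower bound; a correct proof must invoke hypotheses beyond ``$P$ is stochastic with stationary measure $\pi$'' (and the example indicates that the statement, read as a theorem about arbitrary weighted directed graphs, deserves more scrutiny than a citation --- note that for the chains the paper actually cares about, the spectrum of $P_i$ is real and the relevant normality is inherited from the reversible chain $X_1$).
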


\begin{remark}
Note that Chung proves her theorem for arbitrary (not necessarily regular) graphs, in
which case the Laplacian $L$ has the more complicated form
$$L = \Id - \frac {\Phi^{1/2}P\Phi^{-1/2} + \Phi^{-1/2}P^*\Phi^{1/2}} 2$$
where $\Phi$ is the diagonal matrix whose entries are the values of $\pi$. For a regular
graph, $\Phi$ is a scalar multiple of the identity and $P^*=P^T$, so 
this simplifies to $\Id - (P+P^T)/2$ which agrees with the definition of 
$L$ in Theorem~\ref{Lezaud_estimate}.
\end{remark}

\begin{lemma}\label{lambda_1_uniformly_bounded}
For $L=\Id-(P_i+P_i^T)/2$ where $P_i$ is the probability matrix for $X_i$, there is
an estimate $\lambda_1 \ge \text{const.}>0$ where $\text{const.}$ does {\em not} depend on $i$.
\end{lemma}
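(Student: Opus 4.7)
The plan is to show that the nonzero spectrum of $P_i$ is independent of $i$, and then apply Chung's inequality (Theorem~\ref{Chung_estimate}) to convert this into the desired lower bound on $\lambda_1$.

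First, let $V_0 \subset \mathbb{C}^{F_i}$ be the $2k$-dimensional subspace of functions $f$ whose value $f(x_1\cdots x_i)$ depends only on the final letter $x_i$. A one-line computation shows that $P_i V_0 \subseteq V_0$, and that under the natural identification $V_0 \cong \mathbb{C}^{2k}$ the restriction $P_i|_{V_0}$ is exactly $P_1$. The key observation is that $P_i^i$ has image contained in $V_0$: after $i$ steps from $x = x_1\cdots x_i$ the walker is at a word $x_{i+1}\cdots x_{2i}$ whose distribution depends on $x$ only through its last letter $x_i$, since the reducedness constraint on $x_{i+1}$ uses only $x_i$ and each subsequent letter depends only on the previous one. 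Hence $P_i^i$ has rank at most $2k$.

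Choosing any complement $W$ with $\mathbb{C}^{F_i} = V_0 \oplus W$, the matrix of $P_i$ takes the block upper triangular form
$$P_i = \begin{pmatrix} P_1 & U \\ 0 & Q \end{pmatrix},$$
and the image condition forces the bottom-right block of $P_i^i$ --- namely $Q^i$ --- to vanish, so $Q$ is nilpotent. The spectrum of $P_i$ therefore equals $\mathrm{spec}(P_1) \cup \{0\}$, independently of $i$. A direct computation, writing the adjacency matrix of $X_1$ as $J - M$ (all-ones minus the involution swapping inverse pairs), yields $\mathrm{spec}(P_1) = \{1,\, 1/(2k-1),\, -1/(2k-1)\}$. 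Thus
$$\min_{\rho \ne 1}(1 - |\rho|) = 1 - \frac{1}{2k-1} = \frac{2k-2}{2k-1},$$
and Chung's inequality gives $\lambda_1 \ge (2k-2)/(2k-1)$, a positive constant independent of $i$.

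The only step requiring mild care is extracting nilpotency of $Q$ from the rank bound: a rank bound alone would not suffice, but combined with the invariance of $V_0$ (which produces the block upper triangular shape) it forces $Q^i = 0$. I do not anticipate any substantive obstacle; each ingredient above reduces to a short explicit computation.
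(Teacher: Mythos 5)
Your proof is correct. The overall architecture coincides with the paper's: both reduce to Chung's inequality (Theorem~\ref{Chung_estimate}) via the claim that the spectrum of $P_i$ equals the spectrum of $P_1$ padded by zeros. But you establish that claim by a different mechanism. The paper observes that $\mathrm{tr}(P_i^j)=\mathrm{tr}(P_1^j)$ for all $j$ --- both traces count bi-infinite periodic words of period $j$ --- and equal power traces force equal spectra up to zeros; you instead exhibit the invariant subspace $V_0$ of functions of the last letter, on which $P_i$ restricts to $P_1$, and use the containment of the image of $P_i^i$ in $V_0$ to conclude that the complementary block $Q$ is nilpotent. Both arguments are sound; yours is more transparent linear algebra, the paper's is a slicker one-line combinatorial identity. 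You are right that the rank bound alone would not suffice and that the nilpotency must come from combining the image condition with the block-triangular shape --- that step is handled correctly (in fact already $P_i^{i-1}$ has image in $V_0$, so $Q^{i-1}=0$, though this changes nothing). You also go further than the paper at the last step: rather than merely invoking ergodicity and aperiodicity of $X_1$ to get $|\rho|<1$ for $\rho\ne 1$, you diagonalize $P_1=(J-M)/(2k-1)$ and extract the explicit constant $\lambda_1\ge (2k-2)/(2k-1)$, which the paper does not state; your computation of $\mathrm{spec}(P_1)=\lbrace 1,\ 1/(2k-1),\ -1/(2k-1)\rbrace$ is correct (with multiplicities $1$, $k$, and $k-1$ respectively).
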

\begin{proof}
By Theorem~\ref{Chung_estimate}, it suffices to obtain upper bounds on the absolute values
$|\rho_i|$ of the spectrum of $P_i$. But the spectrum of $P_i$ is equal to the spectrum of $P_1$
for any $i$ (padded by zeros), since the traces of all powers $P_i^j$ and $P_1^j$ are equal.
To see this, observe that these traces count the number of periodic cycles in $X_i$ and $X_1$
of period $j$, but such cycles in either case are in bijection with bi-infinite periodic
words with period $j$.

So it suffices to show that the spectrum of $P_1$ has a unique eigenvalue $1$ and all other eigenvalues
strictly less than $1$ in absolute value. This follows from the aperiodicity and ergodicity of $X_1$.
\end{proof}

Incidentally, $X_1$ {\em is} a reversible Markov chain, and therefore the spectrum of $P_1$ is
real, so the same is true for the spectrum of all $P_i$.

\subsection{Cheeger constants in $X_i$}

There are other methods to estimate $\lambda_1$ for a directed graph, via a
generalization of the classical {\em Cheeger's inequality}. If $X$ is a regular
directed graph, the {\em Cheeger constant} $h(X)$ is the infimum of $|\partial U|/|U|$ over
all subsets $U$ of vertices of $X$ with cardinality at most $|U|\le|X|/2$, where $\partial U$
is the set of elements of the complement $U^c$ joined by a directed edge from $U$ to $U^c$.

The significance of this quantity for $\lambda_1$ is the following theorem of Chung:

\begin{theorem}[Chung \cite{Chung}, Thm.~5.1]
Let $X$ be a directed graph. Then
$$2h(X)\ge \lambda_1 \ge h^2(X)/2$$
\end{theorem}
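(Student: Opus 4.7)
The plan is to adapt the classical Cheeger inequality to the directed setting, exploiting the fact that the Laplacian $L = \Id - (P+P^T)/2$ is symmetric even when $P$ is not. The stationarity of $\pi$ under $P$ forces the mass flowing from $U$ to $U^c$ to equal that flowing from $U^c$ to $U$, so the directed cut $|\partial U|$ entering the definition of $h(X)$ is, up to a universal factor, the same quantity as the Dirichlet energy of $\mathbf{1}_U$ against $L$.

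For the upper bound $\lambda_1 \le 2h(X)$, I would take a near-minimizing set $U$ with $|\partial U|/|U| \le h(X) + \epsilon$ and $|U| \le |X|/2$, and use $f = \mathbf{1}_U - c\,\mathbf{1}_V$ (with $c$ chosen so that $f$ is orthogonal to the constants in the $\pi$-inner product) as a test vector. A direct calculation expresses $\langle f, Lf\rangle_\pi$ as $\tfrac12 \sum_{u,v} \pi(u) P(u,v)(f(u)-f(v))^2$; only edges crossing $(U, U^c)$ contribute, and by stationarity the two directions contribute equally, so the total is a constant multiple of $|\partial U|$. Combined with a lower bound on $\|f\|_\pi^2$ proportional to $\pi(U)$, the Rayleigh quotient yields $\lambda_1 \le 2h(X)$ by the variational characterization.

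For the lower bound $\lambda_1 \ge h(X)^2/2$, I would use the Cheeger--Alon--Milman rounding argument. Let $f$ be a $\lambda_1$-eigenvector of $L$ orthogonal to $\mathbf{1}$, and replace it by its positive or negative part $g$ chosen to be supported on a set of $\pi$-measure at most $1/2$. Ordering vertices by decreasing $g$-value and using the super-level sets $U_t = \{g^2 > t\}$, one integrates the directed Cheeger bound $|\partial U_t| \ge h(X) \cdot |U_t|$ over $t$ and applies Cauchy--Schwarz in the form
\begin{equation*}
\Bigl( \sum_{(u,v) \in E} \pi(u) P(u,v)\, |g(u)^2 - g(v)^2| \Bigr)^2 \le 2 \langle g, Lg\rangle_\pi \cdot \sum_v \pi(v) g(v)^2.
\end{equation*}
This translates to the Rayleigh-quotient bound $\langle g, Lg\rangle_\pi / \|g\|_\pi^2 \ge h(X)^2/2$, which yields the lower bound on $\lambda_1$.

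The hard part is the directed-versus-symmetric bookkeeping in the lower bound. In the classical reversible case, $\sum_{\{u,v\}} \pi(u) P(u,v)(g(u)-g(v))^2$ is the Dirichlet energy and the level-set cut $|\partial U_t|$ enters naturally. In the nonreversible case the symmetric form mixes both directions $(u,v)$ and $(v,u)$, so one must use $\pi P = \pi$ to verify that the Dirichlet form of $L$ restricted to the cut $(U_t, U_t^c)$ is controlled below by the one-directional cut $|\partial U_t|$ appearing in the definition of $h(X)$. Without this step the Cauchy--Schwarz estimate would only produce a bound in terms of an undirected (symmetrized) Cheeger constant, which is a priori weaker than $h(X)$.
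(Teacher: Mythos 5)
The paper does not prove this statement; it is quoted verbatim from Chung's paper on Laplacians of directed graphs, so there is no internal proof to compare against. Your argument is a correct rendition of the standard one: both bounds follow the classical Cheeger--Alon--Milman scheme once one observes that stationarity ($\pi P = \pi$) makes the Dirichlet form of the symmetrized Laplacian equal to $\tfrac12\sum_{u,v}\pi(u)P(u,v)(f(u)-f(v))^2$ and forces the outflow and inflow across any cut to coincide, so the one-directional cut in the definition of $h(X)$ controls the full symmetric cut --- exactly the point you flag as the essential directed-versus-symmetric bookkeeping, and exactly how Chung's proof proceeds. The only caveat worth noting is normalization: the paper's $\partial U$ counts boundary \emph{vertices} rather than edge flow $\sum_{u\in U, v\notin U}\pi(u)P(u,v)$, so for a $(2k-1)$-regular graph your constants match Chung's flow-based Cheeger constant and agree with the paper's only up to a bounded (degree-dependent) factor, which is all that is used here.
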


For the sake of interest, we show that the Cheeger constants of the $X_i$ are all equal,
which gives another proof of Lemma~\ref{lambda_1_uniformly_bounded}.

\begin{lemma}
For any $i$, there is an equality $h(X_i)\ge h(X_1)$.
\end{lemma}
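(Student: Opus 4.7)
The strategy is to induct on $i$ using that $X_i$ is the directed line graph of $X_{i-1}$. Concretely, the suffix-of-length-$(i-1)$ map $h_i : X_i \to X_{i-1}$, sending $w_1 w_2 \cdots w_i \mapsto w_2 \cdots w_i$, is a surjective graph homomorphism with uniform fibers of size $2k-1$: each vertex of $X_i$ is identified with a directed edge of $X_{i-1}$. Iterating this reduces the lemma to establishing the single step $h(X_i) \ge h(X_{i-1})$ for all $i \ge 2$, with the base case $X_1$ trivial.

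Given a subset $U \subseteq V(X_i)$ with $|U| \le |V(X_i)|/2$, I would set $n_v = |U \cap h_i^{-1}(v)|$ for each $v \in V(X_{i-1})$, so that $\sum_v n_v = |U|$ and each $n_v$ lies in $\{0,1,\ldots,2k-1\}$. The key structural fact exploited is that out-edges from a fiber $h_i^{-1}(v)$ collapse onto single vertices in neighboring fibers: for each out-neighbor $w$ of $v$ in $X_{i-1}$, every vertex of $h_i^{-1}(v)$ has exactly one out-edge into $h_i^{-1}(w)$, and all these out-edges terminate at the common vertex corresponding to the $X_{i-1}$-edge $v \to w$. This allows $|\partial_{X_i} U|$ to be counted cleanly in terms of the $n_v$'s and the indicator of edges $e = (v,w)$ in $U$. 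The natural auxiliary subset is the threshold set $S = \{v \in V(X_{i-1}) : n_v > (2k-1)/2\}$, which has cardinality at most $|V(X_{i-1})|/2$; the inductive Cheeger inequality $|\partial_{X_{i-1}} S| \ge h(X_{i-1})|S|$ should then translate, via the fibration, into the required lower bound on $|\partial_{X_i} U|$.

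\textbf{Main obstacle.} The principal difficulty is the degenerate regime where the densities $n_v/(2k-1)$ cluster near $1/2$ for all $v$, so the threshold set $S$ is empty and the inductive inequality at $X_{i-1}$ is vacuous. In this regime one cannot localize the argument to a single fiber and must instead track, for each partially-filled fiber $h_i^{-1}(w)$, which specific vertices outside $U$ receive out-edges from $U$. Making this bookkeeping sharp enough to recover a \emph{vertex}-boundary bound of $h(X_{i-1})|U|$ (rather than merely an edge-boundary bound) is the main technical nuisance, and probably requires a fractional or weighted version of Cheeger's inequality on $X_{i-1}$. Should this direct combinatorial line not close, the fallback is the purely spectral route via Theorem~\ref{Chung_estimate} combined with Lemma~\ref{lambda_1_uniformly_bounded}, which yields only a weaker bound of the form $h(X_i) \ge h(X_1)^2/4$ but still delivers the uniform positive lower bound on $\lambda_1$ that is the ultimate aim of this section.
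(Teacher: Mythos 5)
Your structural setup --- that $X_i$ is the line graph of $X_{i-1}$, with all out-edges of a fiber $h_i^{-1}(v)$ collapsing onto a single vertex in each neighboring fiber --- is correct, and it is the same observation the paper's argument rests on. But as written the proposal does not prove the lemma: you concede that the threshold argument fails to close in a regime you cannot handle, and the fallback you offer establishes only $h(X_i)\ge h(X_1)^2/4$, which is not the stated inequality. Worse, that fallback invokes Lemma~\ref{lambda_1_uniformly_bounded}, while the paper introduces the present lemma precisely to give an \emph{independent} second proof of Lemma~\ref{lambda_1_uniformly_bounded}; so in context the fallback is circular.

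Your diagnosis of where the difficulty lies is also inverted. Let $V=\{v : n_v\ge 1\}$ be the support (the set of length-$(i-1)$ suffixes of $U$), and let $V'$ be its set of one-letter extensions, so that $\partial_{X_i}U=V'\setminus U$ and $|V'|=(2k-1)|V|$. In your ``degenerate regime'' where every $n_v\le(2k-1)/2$ one has $|U|=\sum_v n_v\le |V'|/2$, hence $|\partial_{X_i}U|\ge |V'|-|U|\ge |U|\ge h(X_1)|U|$, using $h(X_1)\le 1$ (take $|U|=|X_1|/2$ and note $|\partial U|\le |U^c|$). So that case is trivial and requires no Cheeger inequality downstairs. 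The genuinely delicate case is the opposite one, where fibers are nearly full, and there the correct auxiliary set is the support $V$, not a half-density threshold set $S$: the paper uses $U\subseteq{}'V$ (the one-letter \emph{prefix} extensions of $V$) to get $\partial_{X_i}U\supseteq V'\setminus{}'V$, and then compares $|V'\setminus{}'V|$ with $|\partial_{X_{i-1}}V|$ by prepending letters to elements of $\partial_{X_{i-1}}V=W'\setminus V$. (Even the paper's version is explicitly only a sketch; for instance it must separately dispose of the possibility $|V|>|X_{i-1}|/2$ before applying the Cheeger bound to $V$.) If you redirect your bookkeeping from $S$ to $V$ you will essentially recover the paper's argument; as it stands, the key inductive step is missing.
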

\begin{proof}
We give a sketch of a proof.

Given $U$ a subset of $X_i$ with $|U|\le |X_i|/2$, let $V$ denote the set of suffixes of $U$ of length
$i-1$, and let $V'$ denote the set of words obtained from $V$ by appending a letter. Then
$\partial U = V'\backslash U$.
Also, let ${}'V$ denote the set of words obtained from $V$ by prepending a letter.
Then $|V'| =|{}'V| = |V|(2k-1)$ and $U\subset {}'V$.
Choose $U$ so that 
$$|\partial U| =|V'\backslash U|= h(X_i)|U|\le h(X_i)|V'|$$
Note that either $|V|\le |X_{i-1}|/2$, or else we may obtain a lower bound on $h(X_i)$ from
the difference $|V| - |X_{i-1}|/2$; for the sake of argument, therefore assume the former.

Now think of $V$ as a subset of $X_{i-1}$, and let $W$ denote the set of suffixes of $V$
of length $i-2$, and define $W'$ and ${}'W$ analogously to above. Then $\partial V = W'\backslash V$
by definition. Moreover, $|W'\backslash V|(2k-1)=|V'\backslash {}'V|$ since each element 
of $W'\backslash V$ can be prepended with $(2k-1)$ different letters to produce an element 
of $V'\backslash {}'V$. Since also $|V|(2k-1)=|V'|\ge |U|$ we deduce
$$h(X_{i-1}) \le \frac {|\partial V|}{|V|} = \frac {|W'\backslash V|}{|V|} = \frac {|V'\backslash {}'V|} {|V'|} \le
\frac {|V'\backslash {}'V|}{|U|} \le \frac {|V'\backslash U|}{|U|} = h(X_i)$$
\end{proof}

\subsection{Hyperbolic groups}

For an introduction to hyperbolic groups, see Gromov \cite{Gromov_hyperbolic}.
A finitely generated group $G$ is {\em hyperbolic} if it is coarsely negatively
curved on a large scale. This can be expressed in several equivalent ways in terms of
the geometry of the Cayley graph; the most useful characterizations are
\begin{enumerate}
\item{$\delta$-thinness of triangles;}
\item{a linear isoperimetric inequality; and}
\item{all asymptotic cones are $\R$-trees.}
\end{enumerate}
The adjective ``hyperbolic'' comes from the close (metric) resemblance to hyperbolic 
geometry. But there is another sense in which such groups are hyperbolic, namely 
in the dynamics of the (symbolic) geodesic flow.

Cannon showed \cite{Cannon} that in hyperbolic groups, a set of representative shortest
words in any given generating set can be enumerated by a finite state automaton. In the
language of digraphs, one version of Cannon's theorem can be expressed as follows.

Let $G$ be a hyperbolic group with a symmetric generating set $S$. Let $\Gamma$ be a finite
directed graph with a distinguished (initial) vertex, and edges labeled by elements of
$S$, in such a way that there is at most one edge with a given label emanating from each
vertex. A directed path $\gamma$ in $\Gamma$ starting at the initial vertex determines
a word $w(\gamma)$ in the generators $S$, and by evaluation, an element of $G$. Cannon
shows that one can find such a $\Gamma$ for which there is a $1$-$1$ correspondence between
such directed paths and elements of $G$, and moreover for which every word $w(\gamma)$ is
a geodesic --- i.e.\/ it is of shortest length among all words in $S^*$ representing a
given element of $G$. In more geometric terms, let $\til{\Gamma}$ denote the universal
cover of $\Gamma$ (it is also a directed graph), and let $\Gamma'$ be the subgraph of
$\til{\Gamma}$ which is the union of all directed rays starting at some lift of the initial
vertex. Then $\Gamma'$ embeds in the Cayley graph $C_S(G)$ in an edge-label respecting way
as a spanning tree, and every directed path in $\Gamma'$ is a geodesic in $C_S(G)$.

In this language, there is a correspondence between ``random'' words in $G$, and ``random''
directed walks in $\Gamma$. One thinks of $\Gamma$ as a topological Markov chain, and then
one can assign probabilities to the edges (the transitions between states) in a way which
maximizes the entropy. For such an assignment, the pushforward measure from walks of
length $n$ to the sphere of radius $n$ in $C_S(G)$ is coarsely equivalent to the uniform
measure on the sphere, and the limit as $n \to \infty$ converges to the Patterson-Sullivan
measure on the Gromov boundary $\partial G$ (see e.g.\/ Coornaert-Papadopoulos 
\cite{Coornaert_Papadopoulos}).

\medskip

A significant technical issue is that the graph $\Gamma$ is not typically {\em ergodic}.
Given a general directed graph $\Gamma$, one can form a new directed graph without
cycles, whose vertices are the ``communicating classes'' of vertices in $\Gamma$ (i.e. \/
equivalence classes of the relation $\sim$ where $u \sim v$ if there is a directed path
from $u$ to $v$ and another directed path from $v$ to $u$). Each vertex of the new
graph corresponds to an ergodic subgraph of $\Gamma$, whose adjacency matrix has a
real, non-negative (Perron-Frobenius) eigenvalue.

From the point of view of probability theory, only the vertices with maximal eigenvalue
are significant. It is an important consequence of a theorem of Coornaert \cite{Coornaert}
that for hyperbolic groups, such vertices do not occur in series, but only in parallel.
It follows that this maximal eigenvalue $\lambda$ is also the growth rate of the group;
i.e.\/ the unique $\lambda$ such that there are $\Theta(\lambda^n)$ words of length $n$.
The fact that such ``maximal'' vertices only occur in parallel means informally that
there are finitely many distinct classes $W$ so that all but $O(C^{-n^c})$ 
words of length $n$ fall into one of the classes of $W$, and for words
$v$ in a given class $W_i$, for each $\sigma$ of length $L\log(n)/\log(\lambda)$ with
$L<1$, there is some $f_i(\sigma)$ (depending only on $\sigma$ and on the class $W_i$) so that
$$\Pr\left( |C_\sigma(v) - nf_i(\sigma)| < n^{\epsilon+(1-L)/2} \right) = 1-O(C^{-n^c})$$
i.e.\/ the analogue of Proposition~\ref{small_error} holds for each class $W_i$
separately, and with essentially the same proof. This leaves two problems before one
can attempt to generalize the construction in \S~\ref{random_value_section} 
to arbitrary hyperbolic groups: one must be able to compare $f_i(\sigma)$ for different
classes $i$, and one must be able to compare $f_i(\sigma)$ with $f_i(\sigma^{-1})$.
These problems are largely solved by the methods of \cite{Calegari_Fujiwara, Calegari_Maher};
see especially \cite{Calegari_Maher} \S~3.7.

We believe that it should be straightforward (albeit technically involved)
to generalize the results of \S~\ref{random_value_section} to arbitrary hyperbolic groups,
and therefore feel confident in the following conjecture:

\begin{conjecture}
Let $G$ be a hyperbolic group with finite generating set $S$, and let $\lambda$ be
such that the number of elements of length $n$ is $\Theta(\lambda^n)$.
Let $v$ be a random element of word length $n$,
conditioned to lie in the commutator subgroup $[G,G]$. Then for any $\epsilon>0$ and
$C>1$,
$$|\scl(v)\log(n)/n - \log(\lambda)/6| \le \epsilon$$
with probability $1-O(n^{-C})$.
\end{conjecture}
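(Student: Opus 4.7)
The plan is to follow the same two-pronged strategy used for the free group: an upper bound via an explicit fatgraph built from tripods of random subwords, and a matching lower bound via a combinatorial estimate on combs in an extremal fatgraph. The essential inputs in both cases are sharp asymptotics for the number of subwords of prescribed length that appear, or fail to appear with their inverse, in a random geodesic word of length $n$.

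First I would fix a Cannon automaton $\Gamma$ for $G$ with a finite symmetric generating set, so that geodesic representatives of elements correspond to directed paths in $\Gamma$ from the initial vertex. By Coornaert's theorem the maximal communicating classes $W_1, \ldots, W_r$ of $\Gamma$ (those whose Perron--Frobenius eigenvalue equals the growth rate $\lambda$) appear only in parallel, so with probability $1 - O(C^{-n^c})$ a random geodesic of length $n$ is absorbed into a single such class after $O(1)$ initial steps. Restricted to a fixed maximal class the automaton becomes an ergodic, aperiodic Markov chain of entropy $\log\lambda$, and one can repeat the arguments of \S\ref{L<1subsection}--\S\ref{L>1subsection} (either directly or by invoking Theorem~\ref{Lezaud_estimate}) to obtain class-by-class analogues of Proposition~\ref{small_error} and Proposition~\ref{long_inverse_word_bound}, with $1/|F_{Lm}|$ replaced by a local density $f_i(\sigma)$ associated to the class $W_i$ and the subword $\sigma$.

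For the upper bound I would glue tripods of edge length $(1/2-\epsilon)m$, where now $m = \log(n)/\log(\lambda)$. A ``tripod'' is a triple $(x,y,z)$ of subwords of $v$ such that suitable cyclic concatenations $xY$, $yZ$, $zX$ are geodesic representatives that fit together at a trivalent vertex of the Cayley graph; this is a local, automaton-detectable condition. The analogue of Lemma~\ref{tripod_boundary_small}, asserting that the tripod measure is almost invariant under the joint-exchange involution, reduces to showing $f_i(\sigma) \approx f_j(\sigma^{-1})$ up to $O(n^{-\epsilon})$ whenever $\sigma$ lies in class $W_i$ and $\sigma^{-1}$ lies in class $W_j$. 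This equidistribution-under-inversion is exactly what the methods of \cite{Calegari_Fujiwara, Calegari_Maher} (see especially \cite{Calegari_Maher}~\S 3.7) are designed to supply. Granting it, the fatgraph assembly and the homological-triviality correction of the proof of Proposition~\ref{upper_bound_proposition} transfer essentially verbatim, delivering $\scl(v)\log(n)/n \le \log(\lambda)/6 + \epsilon$.

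For the lower bound one ports the comb argument: the content of Lemma~\ref{comb_estimate} and Proposition~\ref{lower_bound_proposition} depends only on the probabilistic counts of subwords and their inverses, so once the hyperbolic analogues of Proposition~\ref{small_error} and Proposition~\ref{long_inverse_word_bound} are in hand, the same reasoning bounds the average edge length of an extremal trivalent fatgraph by $(1/2+\epsilon)m$. The main obstacle, and the reason this is stated only as a conjecture, is precisely the inversion-symmetry step used in the tripod-gluing construction: in a free group $\sigma^{-1}$ is itself a reduced word with a transparent automaton path, whereas in a general hyperbolic group the geodesic representative of $w^{-1}$ may lie in a different maximal class and need not even be reached from the initial vertex in the same way, so the densities $f_i(\sigma)$ across classes must be compared quantitatively. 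Making the approximate identity $f_i(\sigma) \approx f_j(\sigma^{-1})$ effective, with an $O(n^{-\epsilon})$ rate of symmetry sufficient for Lemma~\ref{tripod_boundary_small}, is the key technical task; once that is done, the rest of the program is expected to go through as in the free case.
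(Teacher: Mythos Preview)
This statement is a \emph{conjecture} in the paper, not a theorem; the paper offers no proof, only a heuristic discussion in the appendix explaining why the free-group argument should generalize. Your outline is faithful to that discussion: the paper invokes exactly the same ingredients you name (the Cannon automaton, Coornaert's theorem that maximal communicating classes occur only in parallel, class-by-class analogues of Proposition~\ref{small_error}, and the methods of \cite{Calegari_Fujiwara, Calegari_Maher} to compare $f_i(\sigma)$ across classes and with $f_i(\sigma^{-1})$), and explicitly identifies the same two residual problems you flag. So your proposal is not a proof but a correct and well-organized restatement of the paper's own program; since the paper itself declines to carry this out and leaves it as a conjecture, there is nothing further to compare.
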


A similar analogue of Theorem~\ref{random_norm_theorem} should also hold.

\subsection{Hyperbolic manifolds}

If $M$ is a closed hyperbolic $d$-manifold, it makes sense to study the stable commutator
length of random closed geodesics with length in $[n-\delta,n+\delta]$ for some 
fixed $\delta$ (conditioned to be homologically trivial). The geodesic flow on a hyperbolic
manifold is the canonical example of an Anosov flow, and the analogues of Lezaud's Chernoff-type
inequality are the mixing theorems of Pollicott \cite{Pollicott} and others.

The correct analogue of $\log(\lambda)$ should be the exponential 
growth rate of the number of orbits as a function of length which is just $d-1$ 
(i.e.\/ the volume entropy) where $d$ is the dimension. The following conjecture seems
very reasonable:

\begin{conjecture}
Let $M$ be a closed hyperbolic $d$-manifold. Fix some $\delta>0$.
Let $\gamma$ be a random geodesic of length in $[n-\delta,n+\delta]$ conditioned to
be homologically trivial, and let $v$ be the corresponding conjugacy class in $\pi_1(M)$.
Then for any $\epsilon>0$ and $C>1$,
$$|\scl(v)\log(n)/n - (d-1)/6| \le \epsilon$$
with probability $1-O(n^{-C})$.
\end{conjecture}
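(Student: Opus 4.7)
The plan is to mirror the free group strategy, transported through a symbolic model for random closed geodesics. Fix a Markov section for the geodesic flow on $T^1M$, giving a suspension flow over a subshift of finite type $\Sigma$ whose topological entropy (with respect to Bowen-Margulis-Sullivan measure) is the volume entropy $d-1$. A random closed geodesic of length in $[n-\delta,n+\delta]$ corresponds, up to exceptional classes of negligible mass, to a random periodic orbit in $\Sigma$. With the natural scale $m:=\log(n)/(d-1)$, exponential mixing of the Anosov geodesic flow (Dolgopyat-type spectral estimates on the symbolic transfer operator, together with Pollicott's orbit counting) yields the direct analogue of Proposition~\ref{small_error} for sub-arcs of length $Lm$ with $L<1$; the combinatorial collision argument for length $Lm$ with $L>1$ (Proposition~\ref{long_inverse_word_bound}) transfers intact because it rested only on symbolic cardinality bounds and on the observation that a geodesic arc cannot overlap its own time-reverse.

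For the upper bound on $\scl(v)$, I would import the tripod construction of \S\ref{random_value_section} via a Kahn-Markovic style pants assembly. Each geodesic sub-arc of $\gamma$ of length $(1/2-\epsilon)m$ is extended, using the equidistribution just established, to a tripod whose three legs are $m/2$-long pieces of $\gamma$ which, up to exponentially small error in $T^1M$, can be paired with a reversed sub-arc elsewhere in $\gamma$. Paired legs are joined by thin immersed pants (cuffs along the matched arcs, with bounded $-\chi$ per pants), and pants are glued in threes along shared cuffs. The resulting $2$-complex $S \to M$ is admissible for some multiple $Nv$ and satisfies $-\chi(S)/2N \le n(d-1)/(6\log n) + o(n/\log n)$. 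Homological residue is handled by the short-rectangle trick of \S\ref{random_value_section}, using the hypothesis $v \in [\pi_1(M),\pi_1(M)]$.

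For the lower bound the plan is two-tiered. The weaker bound $(d-1)/12$ follows verbatim from the quasimorphism construction of \S\ref{quasimorphism_section}: pick a set $S'$ of disjoint sub-arcs of $\gamma$ of length $(1+\epsilon)m$ whose reverses do not appear, form the small counting quasimorphism $\overline{h}_{S'}$ (with defect $\le 6$ by Lemma~\ref{small_counting_defect}), and apply Bavard duality. To upgrade from $(d-1)/12$ to the sharp $(d-1)/6$, I would rerun the comb analysis of \S\ref{random_value_section}: on any hypothetical extremal admissible surface in $M$, discretize the boundary into symbolic words via the Markov section and apply Lemma~\ref{comb_estimate} essentially without change. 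The probability bounds there depended only on the equidistribution of short symbolic sub-arcs, already established above.

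The hard part will be the upper bound, specifically the gluing step. Fatgraphs over a free group admit unconstrained combinatorial gluings, but an admissible surface in $M$ must be realized by an honest map, so one must control twist and shear along every matched pair of cuffs. This is exactly what the Kahn-Markovic machinery is designed to do: mixing of the frame flow $SO(d-1)$-equivariantly forces the shear mismatches to be equidistributed on average, and small corrections (a bounded additional contribution to $-\chi$ per tripod) absorb the residue. A secondary subtlety is that word length in $\pi_1(M)$ and geodesic length in $M$ differ by a multiplicative constant in general, so one must condition on $[\pi_1(M),\pi_1(M)]$ inside the geometric length-$n$ ensemble rather than in Cannon's combinatorial automaton, which replaces the group-theoretic growth exponent $\log\lambda$ by the volume entropy $d-1$.
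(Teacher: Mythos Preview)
The statement you are attempting to prove is not a theorem in the paper but a \emph{conjecture} (stated in the appendix). The authors do not give a proof; they say only that ``the following conjecture seems very reasonable'' and indicate that details are saved for a follow-up paper. So there is no paper proof to compare your proposal against, and your write-up is best read as a research outline rather than a proof.

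As an outline it tracks the paper's own heuristics, but several of the steps you pass over are exactly the ones that make this a conjecture rather than a theorem. For the upper bound, the tripod gluing in \S\ref{random_value_section} works because in a free group \emph{any} labeled fatgraph defines a map to the target space; in a hyperbolic manifold there is no such freedom, and ``Kahn--Markovic style pants assembly'' with boundary prescribed to be a multiple of a \emph{given} geodesic $\gamma$ is a genuinely new problem (Kahn--Markovic build closed surfaces, and their shear-matching argument does not obviously survive the boundary constraint). For the sharp lower bound, the comb analysis of \S\ref{random_value_section} rests on Culler's fatgraph lemma (Lemma~\ref{fatgraph_lemma}), which gives a combinatorial normal form for admissible surfaces specific to free groups; ``discretize the boundary into symbolic words via the Markov section and apply Lemma~\ref{comb_estimate}'' presupposes exactly such a normal form for surfaces in $M$, which you have not supplied. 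Even the weak quasimorphism bound is not free: the defect estimate $D(\overline{h}_{S'})\le 6$ of Lemma~\ref{small_counting_defect} is proved for free groups, and in a general hyperbolic group the analogous bound depends on the hyperbolicity constant. None of this is fatal to the strategy, but each point is a real gap, and the paper leaves the statement open for precisely these reasons.
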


If true, this conjecture would say that one can recover (to any desired accuracy)
the {\em length} of a random geodesic
directly from the bounded cohomology of $\pi_1(M)$; this interpretation is obviously
very close to the spirit of Gromov's celebrated result discussed in the introduction.

\end{document}